\documentclass[11pt,leqno,A4paper]{amsart}
\usepackage{amsmath}
\usepackage{amsfonts}
\usepackage{amssymb}
\usepackage{graphicx}
\usepackage{hyperref}
\usepackage{color}
\usepackage{soul}

\newtheorem{theorem}{Theorem}

\newtheorem{conjecture}[theorem]{Conjecture}
\newtheorem{corollary}[theorem]{Corollary}

\newtheorem{definition}[theorem]{Definition}
\newtheorem{example}[theorem]{Example}

\newtheorem{lemma}[theorem]{Lemma}

\newtheorem{proposition}[theorem]{Proposition}
\newtheorem{remark}[theorem]{Remark}

\numberwithin{equation}{section}
\numberwithin{theorem}{section}

\newcommand{\supp}{\operatorname{supp}}

\usepackage{todonotes}
\newcommand{\Hm}[1]{\leavevmode{\marginpar{\tiny%
			$\hbox to 0mm{\hspace*{-0.5mm}$\leftarrow$\hss}%
			\vcenter{\vrule depth 0.1mm height 0.1mm width \the\marginparwidth}%
			\hbox to 0mm{\hss$\rightarrow$\hspace*{-0.5mm}}$\\\relax\raggedright#1}}}

\title[$L^{p}$-parabolicity]{${\bf L^{p}}$-parabolicity of Riemannian manifolds}

\author{Alexander Grigor'yan  \and Leandro F. Pessoa \and Alberto G. Setti}

{\address{Department of Mathematics\\University of Bielefeld\\
33501 Bielefeld, Germany.} \email{grigor@math.uni-bielefeld.de}}

{\address{Departamento de Matem\'{a}tica\\Universidade Federal do Piau\'{i}\\
64049-550, Teresina, Brazil.} \email{leandropessoa@ufpi.edu.br} }

{\address{Dipartimento di Scienza e Alta Tecnologia\\Universit\`a  dell'Insubria\\
via Valleggio 11, 22100 Como, Italy.} \email{alberto.setti@uninsubria.it} }
\keywords{$L^p$-Parabolicity, $L^q$-Liouville property, Nonlinear $L^p$-capacity, Volume growth conditions.}
 \subjclass[2020]{Primary 31C12, 31C15; Secondary 31C45, 35B53, 53C21, 58J05, 58J65}

\begin{document}

\begin{abstract}
We describe the $L^p$-parabolicity of a Riemannian manifold in terms of a unified theory of nonlinear $L^p$-capacities, for the whole range $1\leq p\leq \infty$.   For example, the \(L^{1}\)-parabolicity is equivalent to the usual notion of parabolicity/recurrence, while \(L^{2}\)-parabolicity is equivalent to biparabolicity. For any $1\leq p\leq \infty,$ the $L^p$-parabolicity turns out to be equivalent to the $L^q$-Liouville property for positive superharmonic functions, where $p$ and $q$ are H\"older conjugate exponents.  We also provide a new capacitary characterization of the $L^1$-Liouville property. Finally we obtain an almost optimal volume growth condition for the $L^p$-parabolicity for $1<p\leq 2$ as well as  a sharp volume condition  for all $1<p<\infty$ for manifolds of non-negative Ricci curvature and  for  model manifolds.
\end{abstract}

\maketitle
\tableofcontents

\section{Introduction}

The properties of diffusion processes on Riemannian manifolds have been extensively investigated and have proven to be useful in the study of the geometric structure of the manifolds and the analytic properties of  differential operators such as the Laplace-Beltrami operator (see, e.g., \cite{alias_mastrolia_rigoli,grigoryan_bams,peter_li,pigola_rigoli_setti_memoirs,yau} and references therein).

One of such properties is recurrence: Brownian motion \(X_{t}\) on a manifold \(M\) is called \emph{recurrent,} and the manifold \(M\) is called \emph{parabolic},  if the process \(X_{t}\) starting at any point $x \in M$ visits infinitely often every neighborhood of $x$ almost surely. The notion of parabolicity plays a crucial role in the classification of Riemann surfaces (cf. \cite{ahlfors_sario}), and it reveals deep connections between Stochastic Analysis, Potential Theory, Analysis and Geometry.

Parabolicity admits several equivalent characterizations, such as the non-existence of a positive Green function of the Laplace-Beltrami operator, the validity of the Liouville property for positive superharmonic functions, and also the vanishing of the harmonic capacity of some/every pre-compact set, to quote a few (see, e.g., \cite{grigoryan_bams}), which makes this notion an indispensable tool  in Potential Theory and PDEs on Riemannian manifolds.

We say that a manifold \(M\) enjoys the \(L^{q}\)-\emph{Liouville property} for superharmonic functions if every positive superharmonic function in  \(L^{q}(M)\) is constant. For example, the \(L^{\infty}\)-Liouville property is equivalent to the parabolicity of \(M\) (since the minimum of a superharmonic function with a constant is again superharmonic).

At the other end of the scale,  the \(L^{1}\)-Liouville property for superharmonic functions is equivalent to the fact that the expected lifetime  of Brownian motion on $M$ is infinite.  This is closely related (although not equivalent) to the \emph{stochastic completeness} of the manifold, which means that the lifetime of Brownian motion is infinite with probability one (cf. \cite{grigoryan_bams} and \cite{bessa_pigola_setti,grigoryan_89,pessoa_pigola_setti}).

It is natural to extend the investigation of the $L^q$-Liouville property for positive superharmonic functions to the entire scale of exponents $1\leq q \leq \infty$, which is the main subject of this paper. In fact, our main  purpose 
is to introduce a unified theory of nonlinear capacities in the setting of Riemannian manifolds and to describe the $L^q$-Liouville property in terms of $L^p$-capacities, where  \(p\)  and \(q\) are H\"older conjugate exponents.

In \cite{adams_hedberg} D.R. Adams and L.I. Hedberg gave a systematic treatment of a general theory of $L^p$-capacities on 
Euclidean space as a generalization of the so-called $(\alpha,p)$-capacities related to nonlinear potentials developed in  \cite{fuglede_165,havin_mazya_202,havin_mazya_203,meyers_318,reshetnyak_362}. For the problems we want to deal with we will restrict our attention to the $L^p$-capacities defined in terms of the positive Green kernel of the manifold or of open subsets thereof, typically with Dirichlet boundary conditions.

Since on parabolic manifolds the \(L^{q}\)-Liouville theorem holds for all \(q\in[1,\infty]\), in what follows we assume  that \(M\) is non-parabolic, in particular, \(M\) possesses a positive Green function of the Laplace-Beltrami operator. Let \(1\leq p<\infty\). Denote by $g(x,y)$ the minimal positive Green function on $M$ and define the $L^p$-capacity of a compact subset $K\subset M$ by
\begin{equation}\label{def_cap_intro}
C_{p}(K)= \inf \left\lbrace \int_M f^p \colon f \in L^p_+(M), Gf(x) \geq 1 \ \ \text{for all} \ \ x \in K\right\rbrace
\end{equation}
where the potential $G f$ is given by
$$Gf(x) = \int_M g(x,y)f(y)d\mu(y).$$
In the limiting case $p=1$, the \(L^{1}\)-capacity is equivalent to the usual harmonic capacity $\text{Cap}(K)$ (see Remark~\ref{rmk_equiv_C1}). The equivalence of these two notions of capacity is valid also in the case when \(M\) is parabolic as in this case \(g\equiv\infty\) and, hence, the capacity of any compact set is zero.

We  say that a manifold $M$ is $L^p$-parabolic if 
every compact subset of \(  M\) has zero $C_{p}$-capacity. One of the main results of this paper, Theorem \ref{characterization_biparb_capacity}, says that, for any $1\leq \ p < \infty,$ the $L^p$-parabolicity is equivalent to the $L^q$-Liouville property. This result extends the aforementioned equivalence between 
parabolicity and the \(L^{\infty}\)-Liouville property.

Although the other limiting case, namely $p=\infty$, is not covered by the definition given in \eqref{def_cap_intro}, we obtain a characterization of the $L^1$-Liouville property in terms of a suitable \(L^\infty\)-capacity (see Theorem \ref{L1_liouville_cap}). This in turn allows us to define a notion of $L^\infty$-parabolicity equivalent to the $L^1$-Liouville property.

An important tool in establishing these equivalences, which is useful in its own right, is the characterization of the $L^q$-Liouville property, $1\leq q<\infty$, by means of the non-integrability of $g(x,\cdot)^q$ outside a pre-compact open set containing $x$, for some/every $x \in M$ (see Theorem \ref{equiv_G2}). As a consequence of this characterization we prove that the $L^2$-parabolicity coincides with the \emph{biparabolicity} of \(M\)
introduced in \cite{faraji_grigoryan} (see Subsection \ref{subsec_bip}). Besides, this allows us to show that the $L^p$-parabolicity implies the $L^{s}$-parabolicity for every $1\leq p \leq s < \infty$ (cf. Corollary \ref{cor_hier}).

As a further application of our results we also show that, on a complete Riemannian manifold $M$, if, for some $o \in M$ and sufficiently large $r>1,$ the volume \(V(r)\) of the ball $B_r(o)$ centered at $o$ of radius $r$ satisfies

\begin{equation}\label{sharp_vol_1}
 V(r)\leq C \frac{r^{2p}}{\log r},
\end{equation}
for some constant $C>0$, then $M$ is $L^p$-parabolic provided $1< p \leq 2$ (cf. Theorem \ref{vol_cond_general}).

This extends the nearly optimal sufficient volume condition established in \cite{faraji_grigoryan} for biparabolicity ($p=2$), and is compatible with the known volume conditions for the usual parabolicity in the limiting case $p = 1$. For manifolds with non-negative Ricci curvature we obtain a sufficient integral condition \eqref{Vp} in the entire range $1<p<\infty$ that implies the sharp volume condition
\begin{equation*}
 V(r)\leq C r^{2p}(\log r)^{p-1}.
\end{equation*}
Moreover, the integral condition \eqref{Vp} turns out to be always valid for general model manifolds (see Section \ref{sec_4}).


The paper is organized as follows. In Section \ref{sect_def_capacity} we describe the general theory of $L^p$-capacities on domains of a Riemannian manifold. In particular, we show that they satisfy natural monotonicity and limit properties with respect to exhaustions (cf. Proposition \ref{C_g2_test_functions}), and we establish some useful results valid in this setting, for instance, we provide a proof of the dual representation of the standard capacity $\text{\rm Cap}$ by means of non-negative Radon measures (see Theorem  \ref{parabolicity_measure_dual}), and we establish, in the Riemannian setting, that the Laplacian capacity $\text{\rm Cap}_{2,1}$ is twice $\text{\rm Cap}$  (see Theorem \ref{BMP-Thm}), thereby extending a theorem of H. Brezis, M. Marcus, and A. C. Ponce, \cite{brezis_marcus_ponce}. Finally, in Theorem \ref{Cap2,2_C_g2} we prove the equivalence between the $L^p$-capacity and the Laplacian capacity $\text{\rm Cap}_{2,p}$, for $1\le p< \infty$. These results are obtained using  two types of approximation: one based on heat semigroup regularization, and the \(L^1\)-approximation based on integration against an integral kernel built from the Green function introduced in \cite{pigola_valtorta_veronelli_NLA}.


Our main results are stated in Section \ref{sec_3}. We establish there the connection between the $L^p$-parabolicity and the non-integrability of $g(x,\cdot)^q$, which yields the equivalence between the $L^p$-parabolicity and the $L^q$-Liouville property. Besides, it also shows that a pre-compact open set has zero $L^{p}$-capacity if and only if this holds for every such set (cf. Corollary \ref{cor_inv_zero_cap}). We end this section by showing the equivalence between biparabolicity and $L^2$-parabolicity.

Section \ref{sec_L1} is devoted to the limiting case $q=1$. There, we introduce a $C_{\infty}$-capacity and, by describing the extremal capacitary measure of compact sets in terms of the mean exit time of the manifold, we obtain a capacitary characterization of the $L^1$-Liouville property (see Theorem \ref{L1_liouville_cap}). We end the section collecting several properties of  the $C_{\infty}$-capacity which in particular imply that it is a Choquet capacity (cf. Proposition \ref{prop_prop_C_g_infty}). The volume growth conditions for $L^p$-parabolicity are discussed in Section \ref{sec_4}. Finally, in Appendix \ref{appendix}, we describe the regularization procedures used in the proofs of Theorems \ref{parabolicity_measure_dual} and \ref{Cap2,2_C_g2}, and present the proof of Theorem \ref{BMP-Thm}.

\section{$L^{p}$-capacity}\label{sect_def_capacity}

Let $(M,\mu)$ be a Riemannian manifold without boundary endowed with the usual Riemannian measure $\mu$ and, for $1\leq p < \infty$, denote by $L^p_+(M)$ the set of non-negative functions in $L^p(M)$. The general theory of nonlinear capacities presented in \cite[Sect.2.3]{adams_hedberg} is based on considering  generic kernels, that is, non-negative functions $g \colon M\times M \rightarrow \mathbb{R}$, such that  $g(x,\cdot)$ is lower semicontinuous on $M$ and $g(\cdot,y)$ is measurable on $M$.

In what follows we will restrict our attention to subsets $U\subset M$ that admit a finite positive Dirichlet Green kernel $g^U(x,y)$ with zero boundary data, which is obtained by the usual exhaustion procedure (see e.g. \cite{dodziuk,pessoa_pigola_setti}), and satisfies $g^U(x,y) \leq g^V(x,y)$ for every $x,y \in U\subset V$. In particular, when $U=M$, we denote by $g(x,y)$ the positive minimal Green kernel of $M$.

Let $\mathcal{M}^+(U)$ be the set of non-negative Radon measures supported in $U$. For $\nu \in \mathcal{M}^+(U)$ define the potential $G^U\nu$ by
\begin{eqnarray*}
G^U\nu(y) &=& \int_U g^U(x,y)d\nu(x), \quad y \in U.
\end{eqnarray*}
If $\nu = f\mu$ for some measurable non-negative function $f$, we will write $G^Uf = G^U (f\mu)$. The potential $G^U\nu$ is well defined provided we allow it to take the value $\infty$. 


Based on \cite[Sect.2.3]{adams_hedberg}, we define the relative variational $L^p$-capacity of the capacitor $(E,U)$ as follows.


\begin{definition}\label{def_bicapacity}
For every $1\leq p<\infty$ and $E \subset U$, let
$$ \Omega^U_E = \{f \in L^p_+(U) \colon G^Uf(x) \geq 1 \ \ \text{for all} \ \ x \in E\},$$
and define
\begin{equation}\label{def_cap_var}
C_{p}(E,U) = \inf_{\Omega^U_E} \int_{U} f^pd\mu .
\end{equation}
We set $C_{p}(E,U) = \infty$ whenever $\Omega^U_E = \emptyset$, and write $C_{p}(E)$ instead of $C_{p}(E,M)$ if $U = M$.
\end{definition}


This set function is an outer capacity, countably  subadditive (see \cite[Sec.2.3]{adams_hedberg}), and it is easily seen to be monotone in $E$, that is, if $E_1 \subset E_2 \subset U$ then
$$ C_{p}(E_1,U) \leq C_{p}(E_2,U) .$$
Furthermore, $C_{p}(E,U)$ is a Choquet capacity for every $1<p<\infty$ (cf. \cite[Prop.2.3.12]{adams_hedberg}).
The usual monotonicity property with respect to $U$  follows from the fact that if $f\in L^p_+(U_1)$ is such
that $G^{U_1}f\geq 1$ on $E$,
extending $f$ by zero on
$U_2\backslash U_1$, by
monotonicity, $G^{U_2}f\geq 1$ and
therefore
$\Omega^{U_1}_E \subseteq
\Omega^{U_2}_E$.

We recall the following characterization of subsets with null capacity.

\begin{proposition}[Prop. 2.3.7 in \cite{adams_hedberg}]\label{characterizing_capacity_null}
Let $E \subset U$ and $1\leq p < \infty$. Then $C_{p}(E,U) = 0$ if and only if there is $f \in L^p_+(U)$ such that $$E \subset \{x \in U \colon G^Uf(x) \equiv \infty\}.$$
\end{proposition}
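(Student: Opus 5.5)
Both implications follow from the definition of $C_{p}(E,U)$ as an infimum over $\Omega^U_E$ together with the linearity and monotonicity of $f\mapsto G^Uf$ on non-negative functions.

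\emph{Sufficiency.} Suppose $f\in L^p_+(U)$ satisfies $G^Uf=\infty$ on $E$. Then for every $\varepsilon>0$ the function $\varepsilon f$ also satisfies $G^U(\varepsilon f)=\varepsilon\, G^Uf=\infty\geq 1$ on $E$. Hence $\varepsilon f\in\Omega^U_E$, and
$$C_{p}(E,U)\leq \varepsilon^p\int_U f^p\,d\mu .$$
Letting $\varepsilon\to0$ gives $C_{p}(E,U)=0$.

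\emph{Necessity.} Assume $C_{p}(E,U)=0$. For each $k\geq1$ choose $f_k\in\Omega^U_E$ with $\int_U f_k^p\,d\mu\leq 2^{-kp}$, so that $\|f_k\|_{L^p}\leq 2^{-k}$. Define
$$f=\sum_{k\ge1} f_k .$$
By Minkowski's inequality (for $p\geq1$), combined with monotone convergence applied to the partial sums, we get $\|f\|_{L^p}\leq\sum_k 2^{-k}=1$. So $f\in L^p_+(U)$; in particular $f$ is finite almost everywhere, which makes it a legitimate element of $L^p_+$.

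Since $g^U\geq0$, monotone convergence (Tonelli) lets us interchange sum and integral:
$$G^Uf(x)=\sum_k G^Uf_k(x)\qquad\text{for every }x\in U,$$
where both sides may take the value $+\infty$. Each $f_k$ lies in $\Omega^U_E$, so every term of this series is $\geq1$ at each $x\in E$. The series therefore diverges, and $G^Uf(x)=\infty$ on $E$, as required.

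The main point requiring care is that $G^Uf_k\geq1$ holds at every point of $E$, not merely quasi-everywhere. This is exactly the requirement built into the definition of $\Omega^U_E$, so it holds automatically. The interchange of sum and integral is also pointwise in $x$ and needs no almost-everywhere qualification, because it uses only the non-negativity of $g^U$ and of the $f_k$. Finally, the case $\Omega^U_E=\emptyset$ does not occur under the hypothesis $C_{p}(E,U)=0$, since the definition assigns capacity $\infty$ to that case.
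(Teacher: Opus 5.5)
Your proof is correct: scaling $f$ by $\varepsilon$ gives sufficiency, and necessity follows by summing admissible $f_k$ with $\|f_k\|_{L^p}\le 2^{-k}$ and using Minkowski and Tonelli to get $G^Uf=\sum_k G^Uf_k=\infty$ on $E$. The paper does not reprove this statement but cites Adams--Hedberg, whose proof is essentially this same summation argument.
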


We shall say that a property holds $C_{p}$-quasieverywhere, $C_{p}$-q.e. for short, if it holds except on a subset $E$ with $C_{p}(E,U) = 0$.

\begin{remark}\label{general_sign}
{\rm One can extend the definition of $G^Uf$ to arbitrary measurable functions $f$ by setting
$$ G^U f(x) = G^U f^+(x) - G^U f^-(x),$$
whenever at least one of the summands on the right hand side is finite. Then, $G^U f(x)$ is well defined and finite $C_{p}$-q.e. by Proposition \ref{characterizing_capacity_null}. It is easy to see that the restriction on the sign of the test functions in $\Omega^U_E$ can be dropped, that is
$$C_{p}(E,U) = \inf\left\lbrace \vert\vert f\vert\vert^p_{L^p(U)} \colon f \in L^p(U), G^U f \geq 1  \ \ C_{p}\text{-q.e} \ \ \text{on} \ \ E \right\rbrace.$$}
\end{remark}

The main advantage in using $L^p$ spaces, for $1< p<\infty$, is to have the existence of a capacitary function. Indeed,  since $\Omega^U_E$ is a convex subset of the uniformly convex Banach space $L^p(M)$, by standard Functional Analysis results there exists an extremal function $f^E$ belonging to the closure of ${\Omega}^U_E$ in $L^p(M)$ that minimizes the above capacity, i.e.,
$$ C_{p}(E,U) = \int_U (f^E)^pd\mu .$$
The function $f^E$ is called the capacitary function of $E$, and $G^Uf^E$ is called the capacitary potential of $E$. Furthermore, $\overline{\Omega^U_E}$ can be explicitly described as (cf. \cite[Prop.2.3.9]{adams_hedberg})
\begin{eqnarray*}
\overline{\Omega^U_E} = \{f \in L^p_+(U) \colon G^Uf(x) \geq 1 \ \ C_{p}\text{-q.e. on } \ E\}.
\end{eqnarray*}

In the setting of Riemannian manifolds we have the following density result which will be instrumental in establishing connections between different nonlinear capacities.

\begin{proposition}\label{C_g2_test_functions}
Let $U\subset M$ be an open set and let $E\subset U$ be a compact subset. Then, for every $1\leq p<\infty$, it holds
\begin{equation}\label{c_g_p_smooth_test}
C_{p}(E,U)=\inf\left\lbrace\int_U\phi^p \,d\mu\,:\, 0\leq \phi\in C^{\infty}_c(U) ,\,\, G^U\phi \geq 1 \text{ on } E \right\rbrace.
\end{equation}
\end{proposition}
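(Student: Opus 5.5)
The inequality ``$\le$'' is immediate, because every admissible $\phi\in C^\infty_c(U)$ with $\phi\ge 0$ and $G^U\phi\ge1$ on $E$ lies in $\Omega^U_E$. So the work is in ``$\ge$''. Fix $f\in\Omega^U_E$ with $\int_U f^p<\infty$ (if there is none, both sides are $+\infty$) and fix $\varepsilon>0$. I will produce $0\le\phi\in C^\infty_c(U)$ with $G^U\phi\ge1$ on $E$ and $\|\phi\|_p\le(1+\varepsilon)\|f\|_p$. The construction has three stages: truncation in space and height, regularization, and rescaling to recover the constraint exactly.

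\emph{Stage 1: truncation.} Choose an exhaustion of $U$ by relatively compact open sets $U_k\Subset U$ containing $E$, and set $f_k=\min(f,k)\mathbf 1_{U_k}$. These functions increase to $f$, so monotone convergence gives $G^Uf_k\uparrow G^Uf\ge1$ pointwise on $E$. Each $G^Uf_k$ is lower semicontinuous, since $g^U$ is lower semicontinuous and non-negative (Fatou). Hence the open sets $\{G^Uf_k>1-\varepsilon\}$ increase and cover the compact set $E$, and by compactness $G^Uf_k>1-\varepsilon$ on $E$ for some fixed $k$. The function $h:=f_k$ is bounded, compactly supported in $U$, and satisfies $\|h\|_p\le\|f\|_p$.

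\emph{Stage 2: regularization.} I take $h_j\in C^\infty_c(U)$ with $h_j\ge0$, supports in a fixed compact set $K'\Subset U$, $\sup h_j\le k$, and $h_j\to h$ in $L^p$ and a.e. These can be obtained by mollifying in charts with a partition of unity, or by the heat-semigroup regularization the paper mentions, followed by a cutoff. The key claim is that $G^Uh_j\to G^Uh$ uniformly on $E$. Here is the argument: $x\mapsto g^U(x,\cdot)$ restricted to $K'$ is continuous into $L^1(K')$. This holds because $g^U(x,y)$ is locally integrable uniformly in $x$ (it behaves like $d(x,y)^{2-n}$, or $\log$ when $n=2$, near the diagonal), and is continuous off the diagonal. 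Therefore the family $\{g^U(x,\cdot)\mathbf 1_{K'}:x\in E\}$ is compact in $L^1$, hence uniformly integrable. Combined with the uniform bound $|h_j-h|\le 2k$ and $h_j\to h$ in measure on $K'$, this gives $\sup_{x\in E}|G^U(h_j-h)(x)|\to0$. So for $j$ large, $G^Uh_j>1-2\varepsilon$ on $E$ and $\|h_j\|_p\le(1+\varepsilon)\|f\|_p$.

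\emph{Stage 3: rescaling.} Set $\phi=h_j/(1-2\varepsilon)$. Then $\phi$ is admissible and $\int\phi^p\le\bigl((1+\varepsilon)/(1-2\varepsilon)\bigr)^p\int f^p$. Letting $\varepsilon\to0$ and taking the infimum over $f$ gives ``$\ge$''. The case $p=1$ needs no special treatment, since no uniform convexity is used.

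I expect the main obstacle to be the uniform convergence in Stage 2, specifically the continuity of $x\mapsto g^U(x,\cdot)\in L^1_{loc}$ on a general open $U$, where $g^U$ comes from an exhaustion. I would first try to deduce it from local Green function estimates on a small ball together with the harmonicity of $g^U(\cdot,y)$ away from the pole. Should this fail, an alternative is to avoid the issue by using lower semicontinuity again: since $G^Uh_j$ is continuous, I would instead seek $h_j\ge h$ locally in the sense $G^Uh_j\ge G^Uh-\delta$, for instance by regularizing with the heat semigroup $P_t$ and using $G^UP^U_th=\int_t^\infty P^U_sh\,ds$.
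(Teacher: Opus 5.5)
Your proof is correct and follows essentially the same route as the paper: truncation in height and support via lower semicontinuity of $G^Uf_k$ and compactness of $E$, then smooth approximation with uniform convergence of the potentials on $E$ driven by the integrable singularity $g^U(x,y)\lesssim d(x,y)^{2-m}$, then a final rescaling. The only difference is technical. The paper first treats bounded smooth $U$: there Aubin's bound gives $\sup_{x\in E}\|g^U(x,\cdot)\|_{L^s(U)}<\infty$ for some $s>1$, so H\"older makes $G^U$ bounded from $L^{s'}$ into $L^\infty(E)$, and general $U$ is then reached by exhaustion. You instead work on $U$ directly with a local bound near the pole. That is fine provided you use the bound as exactly this uniform $L^s$ estimate with $s>1$, since a uniform $L^1$ bound alone would not give continuity into $L^1$; once you have it, H\"older can also replace your compactness/uniform-integrability step.
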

\begin{proof}
By definition, for every $\epsilon>0$, there exists  $f\in L^p_+(U)$
such that
\[
G^Uf(x)\geq 1 \text{ on }E\, \text{ and }\,
\int_U f^pd\mu < C_{p}(E,U)+\epsilon/2.
\]
Since
\[
\int_U f^p  d\mu =\inf_{c>1}\int_U (cf)^p  d\mu
\]
and, by homogeneity, $G^U(cf)(x)=cG^Uf(x)$, choosing  $c>1$ close enough to $1$  and replacing $f$ with $cf$ show that for  every $\epsilon>0$ there exists $f\in L^p_+(U)$ such that
\begin{equation}
\label{f-test}
\int_U f^pd\mu < C_{p}(E,U)+\epsilon \qquad \text{and} \qquad G^Uf(x)\geq c>1  \text{ on } E.
\end{equation}
Finally, let  $U_n$ be an exhaustion of $U$ by an increasing  sequence of pre-compact open sets and set
\[
f_n= \min\{n,f\}\mathbf{1}_{U_n},
\]
where $f$ is a function as in \eqref{f-test}. Then for every $n$, $f_n\in L^p_+(U)\cap L^\infty(U)$ with compact support in $U$.
Since $f_n\nearrow f$, by monotone convergence,
\[
\int_U f_n^pd\mu \nearrow \int_U f^p d\mu \]
and
\[G^Uf_n(x)\nearrow G^Uf(x) \,\,\text{ pointwise on } U.
\]
We claim  there exists $n_o$ such that $G^Uf_n(x)>(1+c)/2$ for every $x\in E$ and $n\geq n_o$.
Indeed, for every $n$ let
\[
V_n=\{ x\in U\,:\, G^Uf_n(x)>(1+c)/2\}.
\]
Then, $V_n$ is open, since $G^U f_n$ is lower semi-continuous, $V_n\subseteq V_{n+1}$ and $E\subset \cup_n V_n$, since $G^Uf_n(x)\to G^Uf(x)\geq c>(1+c)/2$ for every $x\in E$. By compactness there exists $n_0$ such that $E\subset V_{n}$ for every $n\geq n_0.$

By taking $n$ large enough, and replacing $f$ with $f_n$ we conclude that for every $\epsilon>0$ there exists $f\in L^p_+(U)\cap L^\infty_c(U) $ satisfying 
\begin{equation}
\label{f-test1}
\int_U f^pd\mu<C_{p}(E,U)+\epsilon\,\, \text{ and }\,\, G^Uf\geq1+\delta>1 \text{ on } E,
\end{equation}
for some $\delta>0$.

Suppose now that $U$ is bounded with smooth boundary. Fix $1<\tau<m/(m-2)$ and let $m/2<\tau' <\infty$ be its conjugate exponent. Let $f$ be as in \eqref{f-test1} and let $0\leq \phi_k\in C^\infty_c(U)$ be such that
\[
\phi_k\to f \quad \text{ in }\,\, L^{\tau'}(U),
\]
and therefore in every $L^s(U)$ with $s\leq \tau'$. Thus
\[
\int_U \phi_k^p d\mu \to \int_U f^p d\mu<C_{p}(E,U)+\epsilon.
\]

On the other hand, we claim that $h \to G^Uh$ defines a bounded operator from $L^{\tau'}(U)$ into $L^\infty(E)$.
Assuming the claim, we conclude that
\[
||G^Uf - G^U \phi_k||_{L^{\infty}(E)}\leq C||f-\phi_k||_{L^{\tau'}}\to 0, \, \text{ as }\, k \to \infty,
\]
so that, for $k$ sufficiently large
\[
G^U\phi_k(x)>(1+c)/2>1 \quad \forall x\in E,
\]
and \eqref{c_g_p_smooth_test} holds for $U$ bounded with smooth boundary.

It remains to prove the claim. Indeed, according to \cite[Thm. 4.17]{aubin}, there exists a constant $C$ which depends only on the distance of $x$ from $\partial U$ such that
\begin{equation}
\label{g^U estimate} g^U(x,y)\leq C d(x,y)^{2-m}\quad \forall y\in U,\footnote{When $m=2$, we use the logarithmic estimate for the Green function.}
\end{equation}
and it follows that for every $x\in E$ and fixed $1<s<m/(m-2)$
\[
\begin{split}
\int_U g^U(x,y)^s d \mu(y)&\leq C^s\int_{B(x,R)} d(x,y)^{s(2-m)}d\mu(y)\\
&\leq C^s \int_0^R r^{s(2-m)}\left(B^{-1}\sinh(Br)\right)^{m-1}dr=C_1(s)^s,
\end{split}
\]
where $R=\operatorname{diam} U$ and $B>0$ is such that
\[
\operatorname{Ric}_M\geq -(m-1)B^2 \quad \text{on } \ \{x\,:\, d(x,U)< R+1\}.
\]
Thus, if $s'$ is the exponent conjugate to $s$ and  $h\in L^{s'}$,  for every $x\in E$,
\[
\begin{split}
|G^Uh(x)|
&=
|\int_U g^U(x,y) h(y)d\mu(y)|\\
&\leq
\left(
\int_U g^U(x,y)^sd\mu(y)
\right)^{1/s}
||h||_{L^{s'}(U)}=C_1(s) ||h||_{L^{s'}(U)},
\end{split}
\]
and the claim follows.

To conclude, let $U$ be a general  open set which admits a positive Dirichlet Green kernel $g^U=g$, and  let $U_n$ be an increasing exhaustion of $U$ by pre-compact open sets with smooth boundary with $U_1\supset E$. Recall that, denoting by $g_n$ the Dirichlet Green kernel of $U_n$, we have (essentially by definition)
\[
g(x,y)=\lim_n g_n(x,y).
\]
Let $f$ be a function as in \eqref{f-test1} and let $n_1$ be such that $\operatorname{supp} f\subset U_{n_1}$. Since
$g_{n}(x,y)\nearrow  g(x,y)$ in $U_{n_1}$, for every $n\geq n_1$ and $x\in E$,
\[
 G^{U_{n}}f(x) = \int_{U_{n_1}} g_{n}(x,y) f(y) d\mu \nearrow
 \int_{U_{n_1}} g(x,y) f(y) d\mu
=G^{U}f(x),
\]
and arguing as above we deduce that for sufficiently large  $n$, $G^{U_n}f(x)>1+\delta/2>1$ on $E$ and therefore
\[
C_{p}(E,U) \leq C_{p}(E,U_n) \leq \int_{U_{n_1}} f^p d\mu\leq C_{p}(E,U)+\epsilon.
\]
Since we have proved that
\[
C_{p}(E,U_n) =\inf\left\lbrace\int_{U_n}\phi^p d\mu \,:\,  0\leq \phi\in C^\infty_c(U_n)  \text{ and } G^{U_n}\phi(x)\geq 1\right\rbrace
\]
the required conclusion follows.
\end{proof}

We note that in the last part of the above proof we have also shown the following.
\begin{proposition}
\label{prop_limit_regularity}
Let $U$ be an open set of $M$ and let $K\subset U$ be a compact set. Let also $U_n$ be an increasing exhaustion of $U$ by pre-compact open sets with $K\subset  U_1$ and denote by $g(x,y)$, $g_n(x,y)$ the Dirichlet Green kernels of $U$, $U_n$, respectively. Then
\[
C_{p}(K,U)=\lim_{n\rightarrow \infty} C_{p}(K,U_n).
\]
\end{proposition}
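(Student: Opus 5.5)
The plan is to prove the two inequalities separately. First, monotonicity in the domain gives $C_{p}(K,U_n)\geq C_{p}(K,U_{n+1})\geq C_{p}(K,U)$ for every $n$. Indeed, any $f\in\Omega^{U_n}_K$, extended by zero, lies in $\Omega^{U_{n+1}}_K$ and in $\Omega^U_K$, because $g_n\le g_{n+1}\le g$ on $U_n\times U_n$. Hence the sequence $C_{p}(K,U_n)$ is non-increasing, the limit exists, and
\[
\lim_{n\to\infty} C_{p}(K,U_n)\geq C_{p}(K,U).
\]
If $C_{p}(K,U)=\infty$, then every $C_{p}(K,U_n)$ is infinite as well and there is nothing more to prove. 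So assume $C_{p}(K,U)<\infty$.

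For the reverse inequality, fix $\epsilon>0$. Following the first part of the proof of Proposition \ref{C_g2_test_functions}, I will produce $f\in L^p_+(U)\cap L^\infty(U)$ with compact support in $U$ such that $\int_U f^p\,d\mu<C_{p}(K,U)+\epsilon$ and $G^Uf\geq 1+\delta$ on $K$ for some $\delta>0$. This is done by scaling with a constant $c>1$, truncating to $\min\{n,f\}\mathbf 1_{U_n}$, and using compactness of $K$ together with lower semicontinuity of the potentials. Choose $n_1$ with $\operatorname{supp} f\subset U_{n_1}$; this is possible because the $U_n$ are open, increasing and exhaust $U$, so they cover the compact support. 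For $n\geq n_1$ the function $f$ is admissible in $U_n$. Since $g_n\nearrow g$ pointwise, monotone convergence gives $G^{U_n}f\nearrow G^Uf$ pointwise on $U$.

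The main obstacle is upgrading this pointwise convergence to $G^{U_n}f\geq 1$ uniformly on $K$ for large $n$. I will handle it by the same open-cover argument used in Proposition \ref{C_g2_test_functions}. Set $W_n=\{x\in U_n: G^{U_n}f(x)>1+\delta/2\}$. Each $W_n$ is open, because $g_n(x,\cdot)$ is lower semicontinuous and so, by Fatou's lemma, $G^{U_n}f$ is lower semicontinuous. The sets increase with $n$, since $g_n$ increases and $U_n$ increases. They cover $K$, since $G^Uf\geq 1+\delta$ on $K$ and the convergence is monotone. By compactness, $K\subset W_n$ for all large $n$, so $f\in\Omega^{U_n}_K$. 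Therefore
\[
C_{p}(K,U_n)\le\int_{U_n} f^p\,d\mu< C_{p}(K,U)+\epsilon
\]
for all large $n$. Letting $\epsilon\to0$ and combining with the first step gives the equality.

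A minor point needs care: the exhaustion in the statement need not have smooth boundary. The argument above uses only the monotone convergence $g_n\nearrow g$ and lower semicontinuity, both of which hold for Dirichlet Green kernels of an exhaustion of any open sets admitting such kernels. No boundary regularity is used.
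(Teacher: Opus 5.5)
Your proof is correct and follows essentially the same route as the paper, which derives this proposition from the last step of the proof of Proposition \ref{C_g2_test_functions}. There, a near-optimal bounded, compactly supported $f$ with $G^Uf\geq 1+\delta$ on $K$ is combined with $g_n\nearrow g$ and the same lower-semicontinuity and compactness argument to get $G^{U_n}f>1+\delta/2$ on $K$ for large $n$; your remark that no boundary regularity of the $U_n$ is needed applies equally to the paper's argument.
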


\begin{remark}
{\rm For $1<p<\infty$, $C_{p}(\cdot, U)$ is a Choquet capacity, thus Borel subsets are capacitable, and the conclusions of Propositions \ref{C_g2_test_functions} and \ref{prop_limit_regularity} hold for every Borel subset $E\subset U$.}
\end{remark}

The capacity we have introduced admits a dual representation by means of non-negative Radon measures. This duality is possible due to a general version of the von Neumann minimax theorem (cf. \cite[Thm.2.4.1]{adams_hedberg}). For any $1<p<\infty$, let $1<q<\infty$ be its H\"older conjugate exponent, that is, $pq = p + q$.

\begin{theorem}[Thm.2.5.1 in \cite{adams_hedberg}]\label{dual_cap_thm}
Let $K \subset U$ be a compact subset, and let $1<p<\infty$. Then
\begin{equation}\label{dual_cap_def}
C_{p}(K,U)^{\frac{1}{p}} = \sup \{ \nu(K) \colon \nu \in \mathcal{M}^+(K), \vert\vert G^U \nu \vert\vert_{L^{q}(U)} \leq 1\}.
\end{equation}
\end{theorem}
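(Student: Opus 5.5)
The plan is to prove the two inequalities separately. Write $S$ for the supremum on the right-hand side of \eqref{dual_cap_def}. The easy direction, $S\le C_p(K,U)^{1/p}$, is pure H\"older plus Fubini (Tonelli), using the symmetry $g^U(x,y)=g^U(y,x)$ of the Dirichlet Green kernel. Take $\nu\in\mathcal M^+(K)$ with $\|G^U\nu\|_{L^q(U)}\le 1$, and any $f\in\Omega^U_K$. Since $G^Uf\ge1$ on $K$,
\[
\nu(K)\le \int_K G^Uf\,d\nu=\int_U f\,G^U\nu\,d\mu\le \|f\|_{L^p(U)}\|G^U\nu\|_{L^q(U)}\le \|f\|_{L^p(U)} .
\]
Taking the infimum over $f$ gives $\nu(K)\le C_p(K,U)^{1/p}$.

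The reverse inequality is where the minimax theorem \cite[Thm.2.4.1]{adams_hedberg} comes in. First I dispose of the trivial case $C_p(K,U)=0$. Then, by homogeneity in both $f$ and $\nu$, I rewrite the capacity as
\[
C_p(K,U)^{-1/p}=\inf\bigl\{\|f\|_{L^p(U)} : f\in L^p_+(U),\ \min_K G^Uf\ge 1\bigr\}^{-1}=\sup_{\|f\|_p\le1,\ f\ge0}\ \min_{x\in K} G^Uf(x).
\]
By Proposition \ref{C_g2_test_functions}, the supremum can be taken over $f\in C_c^\infty(U)$ (or $L^\infty_c$). For such $f$, $G^Uf$ is continuous on $K$, so the minimum over $K$ exists, and it equals $\min$ over probability measures $\nu\in\mathcal M^+(K)$, $\nu(K)=1$, of $\int G^Uf\,d\nu$. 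Consider the bilinear form
\[
L(f,\nu)=\int_K G^Uf\,d\nu=\int_U f\,G^U\nu\,d\mu
\]
on $A\times B$, where $A$ is the convex set of nonnegative $f$ with $\|f\|_p\le1$ and $B$ is the set of probability measures on $K$. The set $B$ is convex and weak-$*$ compact, and $L(f,\cdot)$ is lower semicontinuous on $B$ because $G^Uf$ is lower semicontinuous (continuous for good $f$). So the minimax theorem gives
\[
\sup_A\min_B L=\min_B\sup_A L .
\]
For fixed $\nu$, duality of $L^p$ and $L^q$ (with $G^U\nu\ge0$) gives $\sup_A L(f,\nu)=\|G^U\nu\|_{L^q(U)}$. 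Hence
\[
C_p(K,U)^{-1/p}=\min_{\nu\in B}\|G^U\nu\|_q ,
\]
and normalizing $\nu/\|G^U\nu\|_q$ yields a measure attaining the value $C_p(K,U)^{1/p}$ in \eqref{dual_cap_def}. The same formula also covers the degenerate cases: if $\|G^U\nu\|_q=\infty$ for all $\nu$, both sides are $0$.

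The main obstacle is justifying the minimax step and the semicontinuity when $G^Uf$ is only lower semicontinuous and possibly infinite. I would first try to restrict to $f\in C_c^\infty(U)$ via Proposition \ref{C_g2_test_functions}, where $G^Uf$ is continuous and finite (the local integrability estimate from its proof applies). After that restriction, the minimax theorem in the form with one side compact applies directly. Care is still needed to check that restricting $A$ to smooth functions does not change $\sup_A L(f,\nu)$; this holds because $C_c^\infty$ is dense in $L^p$ and $L(\cdot,\nu)$ is given by pairing with $G^U\nu\in L^q$, or is $+\infty$, handled by monotone approximation.
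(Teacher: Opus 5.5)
Your proposal is correct and matches the argument the paper relies on by citing \cite[Thm.2.5.1]{adams_hedberg}. The inequality $\nu(K)\le C_p(K,U)^{1/p}$ comes from H\"older and Tonelli, using the symmetry of $g^U$. The reverse inequality comes from the minimax theorem \cite[Thm.2.4.1]{adams_hedberg}, applied to $\int_K G^Uf\,d\nu$ on the unit ball of $L^p_+(U)$ against the weak-$*$ compact set of probability measures on $K$, which gives $C_p(K,U)^{-1/p}=\min_\nu\|G^U\nu\|_{L^q(U)}$.

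The only difference is cosmetic. You restrict to smooth $f$ via Proposition~\ref{C_g2_test_functions} so that $G^Uf$ is continuous. Lower semicontinuity of $G^Uf$ for arbitrary $f\in L^p_+(U)$ already suffices for the minimax step, and it would spare you the density check in $\sup_f\int_U fG^U\nu\,d\mu=\|G^U\nu\|_{L^q(U)}$.
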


The set function defined by the right hand side of \eqref{dual_cap_def}, replacing compact subsets $K$ by arbitrary Borel subsets $E$, is easily seen to be monotone in $E$, and as a consequence of Theorem \ref{dual_cap_thm}, it satisfies the conditions of the capacitability given in \cite[Prop.2.3.12]{adams_hedberg}. It follows that the equality in \eqref{dual_cap_def} holds for every Borel subset (see also \cite[Cor.2.5.2]{adams_hedberg}).

For $1<p<\infty$, this dual representation also admits an extremal measure $\nu^K \in \mathcal{M}^+(K)$ realizing the $C_{p}$-capacity of a compact set $K \subset U$, whose potential is the extremal function $f^K \in L^p_+(U)$.

\begin{corollary}[Thm.2.5.3 in \cite{adams_hedberg}]\label{cor_extremal_measure}
Let $K\subset U$ be a compact set, and $1<p<\infty$. Then there is $\nu^K \in \mathcal{M}^+(K)$, called the capacitary measure of $K$, and a capacitary function $f^K = (G^U\nu^K)^{q-1}$, such that $G^U f^K(x) \geq 1 \ C_{p}$-q.e on $K$, and
$$ \nu^K(K) = \int_U \left(G^U \nu^K\right)^{q} d\mu = \int_U G^U f^K d\nu^K = C_{p}(K,U).$$
\end{corollary}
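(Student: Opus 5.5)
The plan is to obtain the extremal measure as a limit of near-extremal measures in the dual problem of Theorem~\ref{dual_cap_thm}, and then identify it with the primal minimizer $f^K$ through the equality case in H\"older's inequality. Throughout I use the symmetry $g^U(x,y)=g^U(y,x)$, which gives the Fubini identity $\int_U G^U f\,d\nu=\int_U f\,G^U\nu\,d\mu$ for nonnegative $f$ and $\nu$.

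\emph{Step 1: existence of a dual maximizer.} Take $\nu_j\in\mathcal{M}^+(K)$ with $\|G^U\nu_j\|_{L^q(U)}\le 1$ and $\nu_j(K)\to C_p(K,U)^{1/p}$. The masses are bounded, so after passing to a subsequence $\nu_j\rightharpoonup \nu_0$ weakly on the compact set $K$. The kernel $g^U$ is lower semicontinuous, so $G^U\nu_0(y)\le\liminf_j G^U\nu_j(y)$ for every $y$. Fatou's lemma then gives $\|G^U\nu_0\|_q\le 1$. Since $\nu_0(K)=\lim_j\nu_j(K)$, the measure $\nu_0$ attains the supremum.

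\emph{Step 2: the equality chain.} For any $f\in\Omega^U_K$,
\[
\nu_0(K)\le\int_U G^Uf\,d\nu_0=\int_U f\,G^U\nu_0\,d\mu\le \|f\|_p\,\|G^U\nu_0\|_q\le\|f\|_p .
\]
Now pass along a minimizing sequence in $\overline{\Omega^U_K}$ that converges in $L^p$ to the capacitary function $f^K$. The first inequality needs $G^Uf\ge1$ $\nu_0$-a.e. on $K$. For this I would show that $\nu_0$ charges no set of zero $C_p$-capacity. By Proposition~\ref{characterizing_capacity_null}, such a set $E$ satisfies $E\subset\{G^Uh=\infty\}$ for some $h\in L^p_+$. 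On the other hand $\int G^Uh\,d\nu_0=\int h\,G^U\nu_0\,d\mu<\infty$, so $G^Uh<\infty$ $\nu_0$-a.e., and hence $\nu_0(E)=0$. Combining this with Theorem~\ref{dual_cap_thm}, every inequality in the chain becomes an equality.

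\emph{Step 3: identification.} Equality in H\"older forces $f^K=c\,(G^U\nu_0)^{q-1}$ and $\|G^U\nu_0\|_q=1$. Set $\nu^K=\lambda\nu_0$ and choose $\lambda$ so that $f^K=(G^U\nu^K)^{q-1}$. This requires $\lambda^{q-1}=c$. Tracking the normalizations, $\|f^K\|_p^p=C_p$, $\nu_0(K)=C_p^{1/p}$ and $\|G^U\nu_0\|_q=1$, should give $\lambda=C_p^{1/p}$. Then
\[
\nu^K(K)=\lambda C_p^{1/p}=C_p^{2/p}.
\]
This does not look right unless I also use $\int G^Uf^K\,d\nu_0=\nu_0(K)$, so the scaling constants need to be rechecked carefully. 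Concretely, I would compute $\int (G^U\nu^K)^q\,d\mu=\lambda^q$ and $\int_U (f^K)^p\,d\mu=\lambda^{(q-1)p}=\lambda^q$. Setting both equal to $C_p$ forces $\lambda=C_p^{1/q}$. Then $\nu^K(K)=C_p^{1/q}C_p^{1/p}=C_p$, which is the claimed value.

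Finally, $\int G^Uf^K\,d\nu^K=\int f^K G^U\nu^K\,d\mu=\int (G^U\nu^K)^q\,d\mu=C_p$. The property $G^Uf^K\ge1$ $C_p$-q.e. on $K$ holds because $f^K\in\overline{\Omega^U_K}$. I expect the main obstacle to be the quasi-everywhere and lower-semicontinuity issues in Step 2, namely justifying $\int G^Uf^K\,d\nu_0\ge\nu_0(K)$ when $f^K$ lies only in the closure of $\Omega^U_K$.
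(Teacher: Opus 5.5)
Your argument is correct. It is essentially the proof of the cited Adams--Hedberg Theorem 2.5.3, which the paper quotes without proof. The ingredients match: a dual maximizer obtained from weak$^*$ compactness on $K$ and lower semicontinuity of $\nu\mapsto\|G^U\nu\|_{L^q(U)}$; the fact that measures with $G^U\nu\in L^q$ do not charge sets of zero $C_p$-capacity; and the equality case of H\"older, with the correct normalization $\nu^K=C_p(K,U)^{1/q}\nu_0$.

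When you write it up, make three adjustments:
\begin{itemize}
\item Delete the abandoned guess $\lambda=C_p^{1/p}$.
\item Treat the trivial case $C_p(K,U)=0$ separately, taking $\nu^K=0$ and $f^K=0$, because the H\"older equality step needs nonzero norms.
\item Choose one way to get the first inequality of the chain. Taking $f_j\in\Omega^U_K$ with $f_j\to f^K$ in $L^p$ already gives $\int_U f^K G^U\nu_0\,d\mu=\lim_j\int_U G^Uf_j\,d\nu_0\ge\nu_0(K)$ directly, since $G^U\nu_0\in L^q$. So either this or your quasi-everywhere argument suffices; you do not need both.
\end{itemize}
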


\rm We note that, in the limit case $p=1$, equality \eqref{dual_cap_def} does not hold. In this case, the right hand side of \eqref{dual_cap_def} corresponds to the standard capacity
$$\text{\rm Cap}(K,U)  =
\inf\left\{ \int_U |\nabla u |^2 d\mu \,:\, u\in W^{1,2}_0(U),\,\, u\geq 1 \text{ on } K\right\} .$$ Indeed we have
\begin{theorem}\label{parabolicity_measure_dual}
Let $U$ be a bounded open set and let $K\subset U$ be a compact domain. Then
 \begin{equation}\label{eq_Cap_C}
\begin{split} \text{\rm Cap}(K,U) 
&= \sup \{ \nu(K) \colon \nu \in \mathcal{M}^+(K), \vert\vert G^U \nu \vert\vert_{L^{\infty}(U)} \leq 1\}.
\end{split}
\end{equation}
\end{theorem}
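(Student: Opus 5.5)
We prove the two inequalities separately. The lower bound ($\ge$) uses Green's formula and the maximum principle. The upper bound ($\le$) uses the equilibrium measure of $K$.

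\textbf{Step 1: $\text{\rm Cap}(K,U)\ge \sup\nu(K)$.} Take $\nu\in\mathcal{M}^+(K)$ with $\|G^U\nu\|_{L^\infty(U)}\le 1$. Take $u\in C^\infty_c(U)$ with $u\ge 1$ on $K$; by density in $W^{1,2}_0(U)$ such $u$ suffice to compute $\text{\rm Cap}(K,U)$. Since $-\Delta G^U\nu=\nu$ in the distributional sense, we have
\[
\nu(K)\le\int_U u\,d\nu=\int_U \nabla u\cdot\nabla G^U\nu\,d\mu .
\]
To bound the right-hand side we apply Cauchy--Schwarz, which gives $\|\nabla u\|_{L^2}\,\|\nabla G^U\nu\|_{L^2}$, and
\[
\int_U|\nabla G^U\nu|^2d\mu=\int_U G^U\nu\,d\nu\le\nu(K).
\]
Hence $\nu(K)\le \|\nabla u\|_2\,\nu(K)^{1/2}$, that is, $\nu(K)\le\int_U|\nabla u|^2d\mu$. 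Taking the infimum over $u$ and the supremum over $\nu$ gives the inequality.

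The delicate point is justifying the energy identity for a general measure. We first regularize $\nu$ and then pass to the limit using lower semicontinuity of the energy. A cleaner route is to mollify $\nu$ by the heat semigroup, $\nu_t=P_t^U\nu$, along the lines of the Appendix; then $G^U\nu_t=P^U_tG^U\nu\le 1$ still holds. For the smooth densities $\nu_t$ all integrations by parts are legitimate, and we let $t\to0$, using $\int u\,d\nu_t\to\int u\,d\nu$.

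\textbf{Step 2: $\text{\rm Cap}(K,U)\le \sup\nu(K)$.} Because $U$ is bounded, $\text{\rm Cap}(K,U)<\infty$ and the equilibrium potential exists. Let $u$ minimize the Dirichlet energy in $\{u\in W^{1,2}_0(U): u\ge1 \text{ on } K\}$. By the maximum principle $0\le u\le 1$, $u=1$ on $K$ (up to polar sets), and $u$ is harmonic in $U\setminus K$. Since $K$ is a compact domain we may take $\partial K$ smooth, or else approximate $K$ from outside by smooth compact domains and use the outer regularity of $\text{\rm Cap}$. Then $u$ is superharmonic in $U$, and $\nu:=-\Delta u\ge0$ is a Radon measure supported on $\partial K\subset K$. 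In the smooth case it is explicitly $\nu=-\partial_n u\,d\sigma$ on $\partial K$, with $n$ the outward normal of $U\setminus K$. By the Riesz decomposition in $U$ with zero boundary data, $u=G^U\nu$, so $\|G^U\nu\|_{L^\infty}\le1$. Green's formula gives
\[
\nu(K)=\int_U u\,d\nu=\int_U|\nabla u|^2d\mu=\text{\rm Cap}(K,U).
\]
Hence the supremum is at least $\text{\rm Cap}(K,U)$.

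\textbf{Main obstacle.} The regularity issues are the crux. Three points need care: the identification $u=G^U\nu$ (no harmonic remainder, because $u\in W^{1,2}_0(U)$ and $G^U\nu$ has zero boundary values); the rigorous integration by parts for measure data in Step 1; and the reduction to a smooth $\partial K$ in Step 2. I would handle the first two by the heat-semigroup regularization, and the third by monotone approximation of $K$ from outside, using continuity of both sides along decreasing compacts. For the right-hand side of \eqref{eq_Cap_C} this continuity follows from weak-$*$ compactness of measures and lower semicontinuity of $G^U\nu$.
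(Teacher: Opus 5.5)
Your proposal is correct and follows the paper's proof: you get $\sup\nu(K)\le\mathrm{Cap}(K,U)$ from Green's formula, Cauchy--Schwarz and heat-semigroup regularization, and the reverse inequality from the equilibrium potential, written as $G^U\nu$ by the Riesz decomposition. The only difference is a small streamlining: you apply Cauchy--Schwarz to $\nu_t=P^U_t\nu$ (whose potential is $P^U_tG^U\nu\le 1$) and take the limit only in $\int u\,\nu_t\,d\mu=\int P^U_tu\,d\nu\to\int u\,d\nu$, which makes the paper's Lemma~\ref{lemma:hW12} and the $W^{1,2}$-convergence $h_t\to h$ unnecessary, and you also spell out the continuity of the right-hand side along decreasing compacts that the paper's reduction to smooth $\partial K$ uses tacitly.
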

\begin{proof}
By the regularity properties of $\text{Cap}(K,U)$, we may assume that $K$ and $U$ are bounded open sets with smooth boundary. Denoting by $C(K,U)$ the right hand side of the above equality, we begin by showing that  $C(K,U)\leq \text{Cap}(K,U)$. To this end, let  $\nu \in \mathcal{M}^+(K)$ be a Radon measure supported on $K$ and assume that  $h= G^U \nu$ satisfies  $\vert\vert  h 
 \vert\vert_{L^{\infty}(U)} \leq 1$. 

Fix $\epsilon >0$. Since  
the infimum in the definition of $\text{Cap}(K,U)$ can be taken over all functions $\varphi\in C^\infty _c(U)$ with $0\leq \varphi \leq 1$ and   $\varphi=1$ in a neighborhood of $K$, there  exists a function $\varphi_\epsilon$ 
such that 
\[
\int_U |\nabla \varphi
_\epsilon|^2 d\mu <\text{Cap}(K,U) + \epsilon.
\]
As $\nu$ is supported on $K$ and   $\varphi_\epsilon=1$  in a neighborhood of $K$  we have 
\[
\nu(K)= \int \varphi_\epsilon d\nu= \langle \varphi_\epsilon, - \Delta h\rangle= - \int_U \Delta \varphi_\epsilon h \,d\mu= \int_U \langle \nabla \varphi_\epsilon,\nabla h\rangle ,
\]
where we have used the fact that, since $\triangle h = - \nu$, $h$ is harmonic and therefore smooth in $U\setminus K$, and that $\varphi_\epsilon$ is constant in a neighborhood of $K$ and of $\partial U.$
According to Lemma~\ref{lemma:hW12}, $h\in W^{1,2}(U)$, so applying  the Cauchy-Schwarz  inequality we get
\begin{equation}
\label{eq:est-v(K)-1}
\nu(K)^2\leq \int_U|\nabla \varphi_\epsilon|^2 d\mu \int_U|\nabla h|^2 d\mu\leq (\text{Cap}(K,U) +\epsilon) \int_U|\nabla h|^2 d\mu.
\end{equation}
To conclude, we need to estimate the second factor on the right hand side. Let $h_t = P^U_th $ be the heat semigroup regularization induced by the Dirichlet heat kernel $p_t^U(x,y)$ and defined by 
\[
h_t(x)= \int_Up^U_t(x,y)h(y) d\mu.
\]
Since $h\in L^\infty(U),$ $h_t\in C^\infty(\overline{U})$ and vanishes on $\partial U$ (cf. Appendix \ref{appendix} for more details). Moreover, since $h\in W^{1,2}(U)$, by \cite[Ex.7.37 \& Thm.7.20]{grigoryan_book} $h_t\to h$ in $W^{1,2}$ as $t\to 0+$, and therefore
\begin{equation}
\label{eq:est-grad h-1} 
\int_U|\nabla h|^2 d\mu = \lim_{t\to 0+} \int_U|\nabla h_t|^2 d\mu.
\end{equation}
Let $\chi_n\in C^\infty_c(U)$ be a sequence of cut-off functions with $0\leq \chi_n\leq 1$ such  that $\chi_n\nearrow 1$ locally uniformly in $U$. Since $h_t$ vanishes on $\partial U$ by the divergence theorem  we have 
\begin{equation}
\label{eq:est-grad h-2} 
\begin{split}
    \int_U|\nabla h_t|^2 d\mu &= - \int_U h_t\Delta h_t d\mu\\
    &= -\lim_n \int_U \chi_nh_t \Delta (P_t h) d\mu
    = -\lim_n \int_U h \Delta (P_t(\chi_n h_t))  d\mu,
\end{split}
\end{equation}
where we have used that $P_t$ is symmetric and commutes with $\Delta$ on smooth compactly supported functions. We claim that 
\begin{equation}\label{eq:claim-hDelta f}
\int_U h \Delta f  d\mu = - \int_U f d\nu,  \,\, \forall 
f\in C^2(U)\cap C^1(\overline{U}) \,\,\text{ such that } \, f =0 \,\, \text{ on } \,\, \partial U.
\end{equation}
Notice that since $\Delta h = -\nu $ in the distributional sense, \eqref{eq:claim-hDelta f} holds, by definition of distributional Laplacian, whenever $f\in C_c^\infty(U)$ (in fact, $f\in C^2_c(U)$ suffices).

To establish \eqref{eq:claim-hDelta f} assuming only that  $f$ vanishes on $\partial U$, note that the boundary term in the Green's representation formula for $f$ vanishes so that 
\[
f(x)
=-\int_U g^U(x,y)    \Delta f(y)d\mu(y), 
\]
whence, by Fubini-Tonelli,
\[
\begin{split}
\int_Uh \Delta f  d\mu 
&= \int_U \Delta f (x) \int_U g^U(x,y)   d\nu(y)\\
&=  \int_U d\nu(y) \int_U \Delta f (x)  g^U(x,y)  d\mu(x)  
= -\int_U f(y)d\nu (y).
\end{split}
\]
Applying this with $f= P_t^U(\chi_n h_t)$ yields
\[
- \int_U h \Delta(P_t^U(\chi_n h_t)) d\mu= \int_U P_t^U(\chi_n h_t)d\nu 
\leq \nu(K) ||P_t^ U(\chi_n h)||_\infty \leq \nu(K) ||h||_\infty\leq \nu(K),
\]
where we have used the fact that $P_t$ is contractive on $L^\infty (U)$ and that $||h||_\infty\leq 1$. 
Thus,  by \eqref{eq:est-grad h-2},
\[
 \int_U|\nabla h_t|^2 d\mu\leq \nu(K)
\]
for every $t$ so, letting $t\to 0^+$ and using \eqref{eq:est-grad h-1}  
we deduce that
\[
 \int_U|\nabla h|^2 d\mu\leq \nu(K).
\]
Inserting this into \eqref{eq:est-v(K)-1} we  conclude that, for every $\epsilon>0$, $\nu(K)\leq \text{Cap} (K,U) +\epsilon$,
whence, letting $\epsilon \to 0^+$ we finally obtain $\nu(K) \leq \text{Cap} (K,U)$, and since $\nu\in\mathcal{M}^+(K)$ with $||G^U\nu||_\infty\leq 1$ is arbitrary this establishes the inequality
\[
C(K,U)\leq \text{Cap}(K,U).
\]
To prove the reverse inequality, recall that 
\[
\text{Cap(K,U)}= \int_U |\nabla w|^2 d\mu,
\]
where the capacity potential $w$ satisfies 
 \[\left\lbrace
 \begin{array}{l}
 \Delta w = 0 \ \  \text{in } \ U \backslash K, \\
 w = 0 \ \ \text{on } \ \partial U, w = 1 \ \ \text{on } \ K.
 \end{array}\right.
 \]
By the Riesz Decomposition Theorem,  \cite[Thm.4.5.11]{helms}, there is a unique measure $\nu^K \in \mathcal{M}^+(K)$ with $w = G^U\nu^K$. Since $0\leq w\leq 1$, $\nu^K$ is  a test measure for $C(K,U)$ and 
\[
\text{Cap}(K,U) = \nu^K(K)\leq C(K,U),
\]
as required to conclude.
\end{proof}

It was proved by H. Brezis, M. Marcus and A.C. Ponce in \cite[Thm.E.1]{brezis_marcus_ponce} that, in Euclidean space, the standard capacity satisfies, for every compact set $K\subset \mathbb{R}^n$,
\begin{equation}\label{equiv_brezis}
\text{\rm Cap}_{2,1}(K) = 2\text{\rm Cap}(K),
\end{equation}
where $\text{\rm Cap}_{2,1}$ denotes the $(2,1)$-capacity (Laplacian capacity) defined as
\begin{eqnarray*}
\text{Cap}_{2,1}(K) &=& \inf\left\lbrace \int_{\mathbb{R}^n} \vert \Delta u\vert d\mu \colon u \in W^{2,1}_0(\mathbb{R}^n), u \geq 1 \ \text{on} \  K \right\rbrace.
\end{eqnarray*}

In our next result, we extend identity \eqref{equiv_brezis} to Riemannian manifolds. The proof is based on some approximation theorems and is postponed to Appendix \ref{appendix}.

\begin{theorem}\label{BMP-Thm}
Given $U\subset M$ open, for every compact set $K\subset U$ we have
$$\text{\rm Cap}_{2,1}(K,U) = 2\text{\rm Cap}(K,U).$$
\end{theorem}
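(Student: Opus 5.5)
We prove the two inequalities $\text{\rm Cap}_{2,1}(K,U)\le 2\text{\rm Cap}(K,U)$ and $\text{\rm Cap}_{2,1}(K,U)\ge 2\text{\rm Cap}(K,U)$ separately. Here $\text{\rm Cap}_{2,1}(K,U)$ is defined as in the Euclidean case, with $W^{2,1}_0(U)$ in place of $W^{2,1}_0(\mathbb{R}^n)$. By the regularity of both capacities under exhaustions, in the spirit of Proposition \ref{prop_limit_regularity}, we first reduce to the case where $U$ is bounded with smooth boundary and $K$ is a compact domain with smooth boundary. In this case the capacitary potential $w$ of $(K,U)$ exists, and the Riesz measure $\nu^K$ from the proof of Theorem \ref{parabolicity_measure_dual} satisfies $w=G^U\nu^K$ and $\nu^K(K)=\text{\rm Cap}(K,U)=\int_U|\nabla w|^2d\mu$.

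\emph{Upper bound.} Since $K$ is smooth, $\nu^K$ is carried by $\partial K$ with density $-\partial_n w$ computed from the exterior. The mass $\nu^K(K)$ equals the flux of $w$ through $\partial K$, which is also the flux through $\partial U$.

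The test function is a smoothing of $w$ that overshoots $1$ slightly near $K$. Fix small $\delta>0$. Let $u_\delta=\Phi_\delta(w)$, where $\Phi_\delta$ is convex on $[0,1-\delta]$, $\Phi_\delta(s)=s$ near $0$, $\Phi_\delta\ge 1$ near $1$, and $\Phi_\delta$ is then extended smoothly inside $K$.

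Writing $\Delta u=\Phi'(w)\Delta w+\Phi''(w)|\nabla w|^2$, the harmonic region contributes $\int\Phi''(w)|\nabla w|^2$. The coarea formula turns this into $\int\Phi''(s)\,\text{flux}\,ds$. The flux through every level set of $w$ equals $\text{\rm Cap}(K,U)$, so this term is $\text{\rm Cap}(K,U)\int_0^1\Phi''$. Together with the interior region, where $u$ must bend back, the total is $\approx 2\text{\rm Cap}(K,U)$.

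A cleaner route is the following. Approximate $\max(w,\cdot)$-type functions by $u=\min(1,w/(1-\delta))$, suitably mollified via the heat semigroup regularization of the Appendix, so that $u\in W^{2,1}_0(U)$. Then $\Delta u$ is a measure with a positive part on $\{w=1-\delta\}$ and a negative part on $\partial U$. Each part has total mass $\text{\rm Cap}(K,U)/(1-\delta)$, so $|\Delta u|(U)\le 2\text{\rm Cap}(K,U)/(1-\delta)$. Mollifying preserves the total variation up to $\epsilon$ and keeps $u\ge1$ on $K$ after a small rescaling. Letting $\delta,\epsilon\to0$ gives the upper bound.

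\emph{Lower bound.} Let $u\in C^\infty_c(U)$ with $u\ge1$ on $K$; this class suffices by density in $W^{2,1}_0(U)$. Split $U$ according to the sign of $\Delta u$, or rather use the identity
\[
\int_U|\Delta u|\,d\mu\ \ge\ \int_U(\Delta u)^-d\mu+\int_U(\Delta u)^+d\mu .
\]
Since $\int_U\Delta u=0$, both terms are equal, so it suffices to show $\int(\Delta u)^-\ge\text{\rm Cap}(K,U)$. Set $f=(-\Delta u)^+$. Then $u=G^U(-\Delta u)\le G^Uf$, so $G^Uf\ge1$ on $K$ and $f$ is admissible in the definition of $C_1(K,U)$. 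It remains to show $C_1(K,U)\ge\text{\rm Cap}(K,U)$.

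For this, take $f\ge0$ with $G^Uf\ge1$ on $K$, and pair it with the equilibrium measure: $\int f\,d\mu\ge\int fG^U\nu^K\,d\mu$ because $G^U\nu^K=w\le1$. By symmetry of $g^U$ and Fubini, $\int fG^U\nu^K\,d\mu=\int G^Uf\,d\nu^K\ge\nu^K(K)=\text{\rm Cap}(K,U)$.

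The main obstacle is the upper bound, specifically justifying that the truncated potential can be approximated in $W^{2,1}_0$ with $\|\Delta u_k\|_{L^1}\to|\Delta u|(U)$. The function $\Delta u$ is only a measure, so I would invoke the $L^1$-approximation via the Green-kernel integral operator of \cite{pigola_valtorta_veronelli_NLA} mentioned in the introduction, which is developed in Appendix \ref{appendix}. The passage from smooth $K,U$ to general ones is routine by outer regularity.
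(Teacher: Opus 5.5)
Your lower bound is correct, but the upper bound has a genuine gap at the step you call routine: $u=\min\{1,w/(1-\delta)\}$ is not admissible for $\mathrm{Cap}_{2,1}(K,U)$, and neither smoothing you mention makes it admissible.

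The reason is the behaviour at $\partial U$. Since $w>0$ throughout $U$ and, by the Hopf lemma, $\partial_n w\neq 0$ on $\partial U$, the function $u$ is not compactly supported in $U$ and does not vanish to first order on $\partial U$. So $u\notin W^{2,1}_0(U)$. The half of $|\Delta u|$ that you place on $\partial U$ exists only for the extension of $u$ by zero, and it lies outside $U$. (The signs are also swapped: $\Delta u\le 0$ on $\{w=1-\delta\}$, since a minimum of superharmonic functions is superharmonic, and $\Delta u\ge 0$ on $\partial U$.)

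Neither smoothing repairs this.
\begin{itemize}
\item The Dirichlet heat semigroup keeps $P^U_tu=0$ on $\partial U$ but not $\partial_nP^U_tu=0$. In fact, exactly as in Lemma~\ref{lemma:hW12}, $\Delta P^U_tu=P^U_t(\Delta u|_U)$, so $\int_U|\Delta P^U_tu|\,d\mu\le \mathrm{Cap}(K,U)/(1-\delta)$. If these functions were admissible you would get $\mathrm{Cap}_{2,1}(K,U)\le \mathrm{Cap}(K,U)$, contradicting your own lower bound.
\item The kernel $K_r$ of \cite{pigola_valtorta_veronelli_NLA} is used on some $\tilde U\Subset U$, for functions supported in $U'\Subset \tilde U$. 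Here $\operatorname{supp}u=\overline U$, so it does not apply.
\end{itemize}
Your $\Phi_\delta$ sketch has the same defect, because $\Phi_\delta(s)=s$ near $0$.

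The missing step is a truncation from below. This makes the test function vanish near $\partial U$ and moves the positive part of its Laplacian onto an interior level set. The paper takes $v=(w-\delta)_+/(1-2\delta)$. It equals $(1-\delta)/(1-2\delta)>1$ on $K$ and is compactly supported in $U$. By flux conservation, $\Delta v$ is a negative mass $\mathrm{Cap}(K,U)/(1-2\delta)$ on $\partial K$ plus an equal positive mass on $\{w=\delta\}\Subset U$. Keeping your cap, $\min\{1,(w-\delta)_+/(1-2\delta)\}$ works the same way, with the negative mass on $\{w=1-\delta\}$. Now the $K_r$-approximation applies. It gives $v_r\in C^\infty_c(U)$ with $v_r\ge 1$ near $K$ and $\int_U|\Delta v_r|\,d\mu\le \|\Delta v\|_{\mathcal M}=2\,\mathrm{Cap}(K,U)/(1-2\delta)$. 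Alternatively, you can build your function from the capacitary potential of $(K,U')$ with $U'\Subset U$, extend it by zero, mollify inside $U$, and let $U'\nearrow U$.

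Your lower bound takes a different route from the paper's. The paper tests $\mathrm{Cap}(K,U)$ with a regularization of $\tilde\varphi=\min\{1,\varphi_+\}$ and uses $\int_U|\nabla\tilde\varphi|^2d\mu=-\int_U(\tilde\varphi-\tfrac12)\Delta\varphi\,d\mu\le\tfrac12\int_U|\Delta\varphi|\,d\mu$, which is essentially integration by parts. You instead:
\begin{itemize}
\item use $\int_U\Delta u\,d\mu=0$ to write $\int_U|\Delta u|\,d\mu=2\int_U(\Delta u)^-d\mu$;
\item use Green's representation to make $(\Delta u)^-$ admissible for $C_1(K,U)$;
\item pair with the equilibrium measure to get $C_1\ge \mathrm{Cap}$.
\end{itemize}
This needs more potential theory, but it proves the stronger inequality $\mathrm{Cap}_{2,1}(K,U)\ge 2C_1(K,U)$. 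That sharpens the bound $C_1\le \mathrm{Cap}_{2,1}$ of Theorem~\ref{Cap2,2_C_g2}. Combined with that theorem and the present one, it gives $C_1(K,U)=\mathrm{Cap}(K,U)$.
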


In view of Theorem \ref{parabolicity_measure_dual}, one may ask whether the $L^1$-capacity is related to 
the standard capacity or, equivalently, to the $(2,1)$-capacity via Theorem \ref{BMP-Thm}. In the next result, we are going to show that they are indeed equivalent. Furthermore, we also show that, for $1<p<\infty$,  the $L^p$-capacity is actually equal to  the $(2,p)$-capacity.

Let $U \subset M$ be a bounded open subset and recall that $W^{2,p}_0(U)$ is the closure of $C^{\infty}_0(U)$ in the norm of $W^{2,p}(M)$. For a compact subset $K \subset U$ we define
\begin{eqnarray*}
\text{Cap}_{2,p}(K, U) &=& \inf\left\lbrace \int_U \vert \Delta u\vert^p d\mu \colon u \in W^{2,p}_0(U), u \geq 1 \ \text{on} \  K \right\rbrace \\
&=& \inf\left\lbrace \int_U \vert \Delta u\vert^p d\mu \colon u \in C^{\infty}_c(U), u \geq 1 \ \text{on} \  K \right\rbrace.
\end{eqnarray*}


\begin{theorem}\label{Cap2,2_C_g2}
Let $K \subset U$ be a compact subset of an open set $U\subset M$. Then for every $1< p<\infty$ we have
$$C_{p}(K,U) = \text{\rm Cap}_{2,p}(K,U) .
$$
For $p=1$ it holds that
\[
C_1( K,U)\leq\mathrm{Cap}_{2,1}(K,U) \leq 2 C_1( K,U),
\]
\end{theorem}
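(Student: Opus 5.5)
The plan is to prove the two inequalities separately. The direction $C_{p}\le \mathrm{Cap}_{2,p}$ should be elementary and valid for every $1\le p<\infty$. The reverse direction is an approximation argument, and I expect it to be the real content.

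\emph{Step 1: $C_{p}(K,U)\le \mathrm{Cap}_{2,p}(K,U)$.}
\begin{itemize}
\item Take $u\in C^\infty_c(U)$ with $u\ge 1$ on $K$ and set $f=-\Delta u\in L^p(U)$. Since $u$ has compact support in $U$, Green's representation formula has no boundary term, so $u=G^Uf$ pointwise (first in a smooth pre-compact $U_n\supset\operatorname{supp}u$, then letting $n\to\infty$).
\item $f$ is signed, so I apply Remark~\ref{general_sign}: a signed $f\in L^p(U)$ with $G^Uf\ge 1$ q.e.\ on $K$ is an admissible competitor. This gives $C_{p}(K,U)\le\int_U|\Delta u|^p\,d\mu$.
\item Taking the infimum over $u$ gives the inequality. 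In particular it gives the left half of the case $p=1$.
\end{itemize}

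\emph{Step 2: reduction for the reverse inequality.}
\begin{itemize}
\item Using Proposition~\ref{prop_limit_regularity}, and the analogous easy monotone limit $\mathrm{Cap}_{2,p}(K,U)=\lim_n \mathrm{Cap}_{2,p}(K,U_n)$ (every $C^\infty_c(U)$ competitor lives in some $U_n$), I reduce to $U$ bounded with smooth boundary.
\item By Proposition~\ref{C_g2_test_functions}, it suffices to take $0\le\phi\in C^\infty_c(U)$ with $G^U\phi\ge 1$ on $K$ and produce compactly supported $u$ with $u\ge 1$ on $K$ and $\int|\Delta u|^p\,d\mu$ close to $\int\phi^p\,d\mu$ (respectively $\le 2\int\phi\,d\mu+\epsilon$ when $p=1$).
\item The natural candidate is $w=G^U\phi$. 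It is smooth on $\overline U$, satisfies $-\Delta w=\phi$, and vanishes on $\partial U$. However, its normal derivative is nonzero there, so $w\notin W^{2,p}_0(U)$, and it must be modified near $\partial U$.
\end{itemize}

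\emph{Step 3: the case $p=1$.} Here I would truncate from below.
\begin{itemize}
\item Let $u=\psi_\epsilon(w)$, where $\psi_\epsilon$ is smooth, convex and nondecreasing, with $\psi_\epsilon=0$ on $[0,\epsilon]$, $\psi_\epsilon'\le 1$, and $\psi_\epsilon(t)=t-2\epsilon$ for $t\ge 3\epsilon$. Rescale by $(1-2\epsilon)^{-1}$ so that $u\ge1$ on $K$. Then $u$ has compact support in $U$.
\item We have $\Delta u=\psi_\epsilon'(w)\Delta w+\psi_\epsilon''(w)|\nabla w|^2$. The first term has $L^1$ norm at most $\int\phi\,d\mu$.
\item For the second term, the coarea formula and the divergence theorem on $\{w>t\}$ give
\[
\int_U\psi_\epsilon''(w)|\nabla w|^2\,d\mu=\int_0^\infty\psi_\epsilon''(t)\int_{\{w=t\}}|\nabla w|\,dt=\int_0^\infty\psi_\epsilon''(t)\int_{\{w>t\}}\phi\,d\mu\,dt\le\int_U\phi\,d\mu,
\]
since $\int\psi_\epsilon''=1$.
\item Hence $\mathrm{Cap}_{2,1}\le 2\int\phi\,d\mu+o(1)$. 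This is exactly the factor $2$, consistent with Theorem~\ref{BMP-Thm} combined with the statement.
\end{itemize}

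\emph{Step 4: the case $1<p<\infty$, which is the main obstacle.} The same convex truncation fails. The term $\psi''(w)|\nabla w|^2$ has $L^p$ norm of order $\delta^{(1-p)/p}$ when the truncation is spread over a layer of width $\delta$, because $|\nabla w|$ is bounded away from $0$ near $\partial U$, so it cannot be made small. Two routes to try:
\begin{itemize}
\item \emph{Harmonic correction and exhaustion.} Exploit that $U$ is not fixed. Choose $\phi$ to be almost extremal for $C_{p}(K,U_n)$ with $U_n\Subset U$. Modify $w_n=G^{U_n}\phi$ outside $U_{n}$ by a correction $h$ that is harmonic in $U\setminus \overline{U_{n}}$ and glues $C^1$-ly to $w_n$. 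The cost then appears only in a boundary layer that can be pushed toward $\partial U$, where $C_{p}(K,U_m)\to C_{p}(K,U)$ controls the loss.
\item \emph{Duality.} Use Theorem~\ref{dual_cap_thm} together with a parallel minimax representation
\[
\mathrm{Cap}_{2,p}(K,U)^{1/p}=\sup\bigl\{\nu(K)/\operatorname{dist}_{L^q}\bigl(G^U\nu,\ \mathcal H^q(U)\bigr)\bigr\},
\]
where $\mathcal H^q(U)$ denotes the $L^q$ harmonic functions on $U$. Equality with $C_p$ then amounts to showing that the extremal potential $G^U\nu^K$ of Corollary~\ref{cor_extremal_measure} is $L^q$-orthogonal, in the sense of best approximation, to $\mathcal H^q(U)$, after passing to the limit along the exhaustion.
\end{itemize}
I would first try the harmonic correction route, using the Green-kernel $L^1$-approximation of \cite{pigola_valtorta_veronelli_NLA} mentioned in the introduction to control the gluing error. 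This step is where I expect the real difficulty to lie.
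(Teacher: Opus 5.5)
Your Steps 1 and 3 are correct. Step 1 is the paper's argument. Step 3 is a genuinely different route to $\mathrm{Cap}_{2,1}\le 2C_1$. The paper extends $G^{U'}\phi$ by zero outside a smooth $U'\Subset U$, identifies its distributional Laplacian as $-\phi+\nu$ with $\nu\ge0$ carried by $\partial U'$ and of mass $\int\phi\,d\mu$, and then smooths with the Green-kernel $L^1$-approximation of \cite{pigola_valtorta_veronelli_NLA}. Your convex truncation plus the coarea formula produces a $C^\infty_c$ competitor directly, without that machinery.

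The genuine gap is Step 4, and it is not merely unfinished: the statement your Step 2 reduces to is false. Let $U$ be pre-compact with smooth boundary, $\mu(K)>0$, $u\in C^\infty_c(U)$ with $u\ge1$ on $K$, and $f=-\Delta u$. Then $G^Uf^+\ge G^Uf=u\ge1$ on $K$, so $f^+$ is admissible for both $C_p(K,U)$ and $C_1(K,U)$. Also $\int_U f\,d\mu=0$ gives $\|f^-\|_{L^1}=\|f^+\|_{L^1}\ge C_1(K,U)$. By H\"older,
\[
\int_U|\Delta u|^p\,d\mu=\|f^+\|_{L^p}^p+\|f^-\|_{L^p}^p\ \ge\ C_p(K,U)+C_1(K,U)^p\,\mu(U)^{1-p},
\]
and $C_1(K,U)\ge\mu(K)/\sup_U E^U>0$ (integrate $G^Uf\ge1$ over $K$ and use symmetry of $g^U$). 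Hence $\mathrm{Cap}_{2,p}(K,U)>C_p(K,U)$ on every such $U$. No boundary-layer or harmonic correction can close Step 4 there, for any exhaustion. In the language of your duality route: constants lie in $\mathcal H^q(U)$ and $G^U\nu^K>0$, so $\|G^U\nu^K-c\|_{L^q}<\|G^U\nu^K\|_{L^q}$ for small $c>0$, and the orthogonality you need fails. (At $p=1$ the same computation gives $\mathrm{Cap}_{2,1}(K,U)\ge 2C_1(K,U)$, so your factor $2$ is sharp.) The defect term disappears only when $\mu(U)=\infty$, so the reverse inequality has to be attacked globally.

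That is what the paper does. It proves $C_p\ge\mathrm{Cap}_{2,p}$ only for $U=M$, by duality rather than by building competitors:
\begin{itemize}
\item Set $a=\mathrm{Cap}_{2,p}(K)^{1/p}$. Hahn--Banach for the sublinear functional $\mathcal P(f)=\inf\{\|\Delta\varphi\|_{L^p(M)}:\varphi\in C^\infty_c(M),\ \varphi\ge f\text{ on }K\}$ on $C(K)$, together with Riesz--Markov, gives $\nu_o\in\mathcal M^+(K)$ with $\nu_o(K)=a$ and $|\int\varphi\,d\nu_o|\le\|\Delta\varphi\|_{L^p}$.
\item Extending $\Delta\varphi\mapsto\int\varphi\,d\nu_o$ from $\Delta(C^\infty_c(M))$ to $L^p(M)$ and using Riesz representation gives $h\in L^q(M)$ with $\|h\|_{L^q}=1$ and $\Delta h=\nu_o$.
\item The paper then asserts $h=-G\nu_o$ and applies Theorem~\ref{dual_cap_thm} to get $C_p(K)^{1/p}\ge a$.
\end{itemize}
This is your second route run on $M$. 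The step $h=-G\nu_o$, which the paper states without comment, is exactly your orthogonality condition. By the computation above it requires a global Liouville input. On a complete non-parabolic $M$ it follows from Yau's theorem that nonnegative subharmonic $L^q$ functions are constant. Indeed $h_+\equiv0$ because $\mu(M)=\infty$, so $-h\ge0$ is superharmonic with Riesz measure $\nu_o$; hence $-h\ge G\nu_o$ and $\|G\nu_o\|_{L^q}\le1$.

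So the missing ideas are: stay on $M$ instead of reducing to bounded pieces, obtain the dual measure $\nu_o$ from Hahn--Banach, and remove the harmonic part by a Liouville argument. Moreover, as shown above, the equality for $1<p<\infty$ cannot hold for a general open $U$ as the statement is phrased.
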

\begin{proof}
The first general inequality follows by taking $u \in C^{\infty}_c(U)$ such that $u\geq 1$ on $K$ and recalling that, by Green's representation, we have $u = G^U(-\Delta u)$. By Remark \ref{general_sign} the function $-\Delta u \in L^p(U)$ is a test for $C_{p}(K,U)$, and thus
$$  C_{p}(K,U) \leq \int_U \vert\Delta u\vert^p d\mu .$$
The result follows by taking infimum over the right hand side.

To prove the two-sided estimate for $p=1$ we first observe that, due to Proposition \ref{prop_limit_regularity}, given $\varepsilon> 0$ we can first find a pre-compact open set $U'$ with smooth boundary such that $K\Subset U' \Subset U$ and $C_1(K, U')\leq C_1(K,U) +\varepsilon/2$ and then, by Proposition \ref{C_g2_test_functions},  a function $0\leq \phi \in C^{\infty}_c(U')$ such that, for some $\delta>0$, $G^{U'}\phi\geq 1+\delta$ on $K$ and
\begin{equation}\label{estim_phi}
\int_{U'}\phi d\mu\leq C_1(K,U') +\varepsilon/2 \leq C_1(K,U) +\varepsilon.
\end{equation}

Let $u =G^{U'}\phi$ on $U'$ extended  by zero to  $U$, so that $u\in C_c^{0,1}(U)$.
Clearly,
\[
\Delta u =
\begin{cases}
-\phi &\text{in } U',\\
0 &\text{in }U\setminus \overline{U'},
\end{cases}
\]
while, integrating by parts shows that, for every $\rho \in C^\infty_c(U\setminus \overline {U''})$, 
where $U''$ is a pre-compact open set with $ \supp \phi\Subset U'' \Subset U'$, we have
\[
\begin{split}
\int_U u\Delta \rho d\mu &=
\int_{U'\setminus\overline{U''}}
\Delta u\rho d\mu
+ \int_{\partial (U'\setminus\overline{U''})} (u\langle \nabla\rho, n \rangle - \rho
\langle \nabla u, n\rangle ) d\mu' \\[0.2cm]
&= -\int_{\partial U'}
\rho
\langle \nabla u, n\rangle d\mu',
\end{split}
\]
where $n$ denotes the outward unit normal and $d\mu'$  the $(n-1)$-dimensional measure induced by $\mu$.
Since $u\geq 0$ in $U'$ and $u=0$ on $\partial U'$, we conclude that the distributional Laplacian of $u$ is the  finite Radon measure given by
\begin{equation}\label{Lapla_dist}
\Delta_{\text{dist}} u = -\phi + \nu,
\end{equation}
where $\nu$ is the positive  measure concentrated on $\partial U'$ defined by
\[
\langle \rho,\nu\rangle =
-\int_{\partial U'}
\rho
\langle \nabla u, n\rangle d\mu',
\]
for every $\rho\in C^\infty_c (U)$.
Its total mass can be easily computed as
$$
\nu(U) =  - \int_{\partial U'} 1\langle \nabla u, n\rangle d\mu'= \int_{U'} \phi d\mu.
$$

Due to the lack of regularity, we consider the smooth $L^1$-approximation $u_r$ of $u$ defined in \eqref{def_u_r} by
\begin{equation*}
u_r(x) = \int_{\tilde U} K_r(x,y) u(y) d\mu(y).
\end{equation*}
It follows from \eqref{def_Deltau_r} that
 \[
    \begin{split} \Delta u_r(x) &= \int_{\tilde U} K_r(y,x) \Delta u(y) d\mu(y)\\&=
    -	\int_{\tilde U} K_r(y,x)\phi(y) d\mu(y) 
    + \int_{\tilde U} K_r(y,x)d\nu(y),
    \end{split}
	\] 
where $\Delta u$ 
is the distributional Laplacian of $u$ and we used \eqref{Lapla_dist}.
Since the two integrals on the right hand side are supported in  $\epsilon$-neighborhoods of $\mathrm{supp} \,\phi$ and $\partial U'$, respectively (see Appendix \ref{appendix}), we obtain

\begin{eqnarray*}
    \int_U \vert \Delta u_r(x)\vert d\mu(x) &=& \int_{\tilde U} \Big\vert -\int_{\tilde U} K_r(y,x)\phi(y) d\mu(y) + \int_{\tilde U} K_r(y,x)d\nu(y)\Big\vert d\mu(x) \\[0.2cm]
&= & \int_{\tilde U} \left( \int_{\tilde U} K_r(y,x)\phi(y) d\mu(y) + \int_{\tilde U} K_r(y,x)d\nu(y)\right) d\mu(x) \\[0.2cm]
&= & \int_{\tilde U} \phi(y) d\mu(y) + \int_{\tilde U} d\nu(y)\\[0.2cm]
&= & 2 \int_{U'} \phi(y) d\mu(y).
\end{eqnarray*}
Therefore, by definition we have
$$\text{\rm Cap}_{2,1}(K,U) \leq \int_U \vert\Delta u_r\vert d\mu = 2 \int_{U'} \phi d\mu .$$
Hence, using \eqref{estim_phi}, taking the infimum over $\phi$ and letting $\varepsilon \rightarrow 0$, we conclude that
$$\text{\rm Cap}_{2,1}(K,U) \leq 2 C_{1}(K,U).$$

To prove that $C_p(K)\geq \operatorname{Cap}_{2,p} (K)$, for $1<p<\infty$,  assume that 
\[
 \operatorname{Cap}_{2,p}(K)^{1/p}=a. 
\]
For each $f\in C(K)$ we define the sublinear functional,
\[
\mathcal P(f) \doteq
\inf\left\{
\|\Delta\varphi\|_{L^p(M)}:
\varphi\in C_c^\infty(M),\ \varphi\ge f \text{ on }K
\right\}.
\]
Note that 
\[
\mathcal P(1)=a.
\]
We define 
$\Lambda_0$  on $\operatorname{span}\{1\}$ by
\[
\Lambda_0(t\cdot 1)=a\cdot t.
\]
It is easy to check that $\Lambda_0(f)\le \mathcal P(f)$ on $\operatorname{span}\{1\}$. By the Hahn-Banach theorem, there exists a linear functional
\[
\Lambda:  C(K)\to \mathbb{R}
\]
such that
\[
\Lambda(f)\le \mathcal P(f)
\quad \forall f\in C(K),
\qquad \text{and }\, 
\Lambda(1)=a.
\]
Moreover,  $\Lambda$ is positive. Indeed, if $f\ge0$, then $\mathcal P(-f)=0$, hence
\[
-\Lambda(f)=\Lambda(-f)\le \mathcal P(-f)=0,
\]
and the conclusion follows. 

By the Riesz-Markov theorem, there exists a positive Radon measure
\[
\nu_o\in\mathcal M^+(K)
\]
such that
\[
\Lambda(f)=\int_K f\,d\nu_o \quad \text{and} \quad \nu_o(K)=\Lambda(1)=a.
\]
Since, for every $\varphi\in C_c^\infty(M)$,  the inequality $\varphi\ge \varphi|_K$ on $K$ is obviously satisfied, we have
\[
\mathcal P(\varphi|_K)\le \|\Delta\varphi\|_{L^p(M)},
\]
whence
\begin{equation}
\label{eq:normestimate}
\left|\int_K \varphi\,d\nu_o\right|
=
|\Lambda(\varphi|_K)|
\le
\mathcal P(\varphi|_K)
\le
\|\Delta\varphi\|_{L^p(M)}.
\end{equation}
It follows that if $\Delta \varphi=\Delta\varphi_1$,
\[
\int_K\varphi d\nu_o=\int_K\varphi_1 d\nu_o,
\]
showing that the functional 
\[
C^\infty_c(M)\ni\varphi\to \int_K \varphi d\nu_o
\]
induces a linear functional defined by 
\[
{F}(\psi)\doteq \int \varphi d\nu_o 
\quad\text{ if }\Delta\varphi= \psi,
\]
on the subspace
\[
\mathcal R\doteq \Delta(C_c^\infty(M))\subset L^p(M),
\]
and  from \eqref{eq:normestimate} we deduce that  $F$ is bounded in $L^p$ with norm $||F||\leq 1$. On the other hand,  the definition of
\[
a=P(1)=\inf\{||\Delta \varphi||_{L^p(M)} \,:\, \varphi \in C_c(M), \,\, \varphi\geq 1 \text{ on } K \},
\]
shows that for every $\epsilon>0$ there exists $\varphi_\epsilon\in C^\infty_c(M)$ such that $\varphi_\epsilon\geq 1$ on $K$ and $||{ \Delta } \varphi_\epsilon||_{L^p(M)} <a+\epsilon$.
Thus,
\[
a=\nu_o(K)=\int_K 1 d\nu_o\leq \int_K \varphi_\epsilon d\nu_o = F(\Delta \varphi_\epsilon)  \leq ||\Delta\varphi_\epsilon||_{L^p(M)}<a+\epsilon,
\]
so that
\[
\frac{F(\Delta \varphi_\epsilon)}{||\Delta \varphi_\epsilon||_{L^p(M)}}\geq \frac{a}{a+\epsilon},
\]
Letting $\epsilon\to 0$ this shows that  $||F||\geq 1$ which, together with the reverse inequality established above, yields $||F||=1$. 

Again by Hahn-Banach theorem, ${F}$ extends to a bounded linear functional
\[
\widehat{F}: L^p(M)\to \mathbb{R} \quad\text {  with } \, \widehat F_{|\mathcal{R}} = F,  \,\,\text{ and } \,\,||\widehat F|| = ||F|| =1,
\]
and then, by  the Riesz representation theorem, there exists $h\in L^q(M)$ such that $\||h||_{L^q(M)}=||\widetilde F||=1$ and
\[
\widehat{F}(u)=\int_M h\,u\,d\mu
\qquad \forall u\in L^p(M).
\]
Finally, given $\varphi\in C_c^\infty(M)$, we  have
\[
\int_K \varphi\,d\nu_o
=
F(\Delta\varphi)
=
\widehat{F}(\Delta\varphi)
=
\int_M h\Delta\varphi\,d\mu.
\]
Thus, in the sense of distributions, $\Delta h=\nu_o $, so that $h=-G\nu_o $. It follows that $G\nu_o \in L^q(M)$ and $||G\nu_o||_{L^q(M)}=1$ whence
\[
C_p(K)^{\frac{1}{p}}= \sup\{\nu(K)\,:\, \nu\in \mathcal{M}^+(K), \,\, ||G\nu||_{L^q(M)}\leq 1\}
\geq \nu_o(K)= a= \operatorname{Cap}_{2,p}(K)^{\frac{1}{p}}.
\]
\end{proof}

\begin{remark}\label{rmk_equiv_C1}
\emph{Combining Theorem \ref{parabolicity_measure_dual} with Theorems \ref{BMP-Thm} and \ref{Cap2,2_C_g2}, we conclude that a compact set $K\subset M$ has zero $L^1$-capacity if and only if the right side of equality \eqref{eq_Cap_C} is zero.}    
\end{remark}

We end this section providing an alternative characterization of the $L^p$-capacity for $1<p<\infty$. First of all, guided by Corollary \ref{cor_extremal_measure}, the capacitary potential of a measure $\nu \in \mathcal{M}^{+}(U)$ is defined by
\begin{eqnarray}\label{def_potential}
V_p^\nu (x) &=& G^U(G^U\nu)^{q-1}(x) \\[0.2cm]
&=& \int_U g^U(x,y)\left(\int_U g^U(z,y)d\nu(z)\right)^{q-1}d\mu(y). \nonumber
\end{eqnarray}
By analogy with the classical potential theory, the quantity
$$ \int_U V_p^\nu d\nu = \int_U (G^U\nu)^{q} d\mu$$
is called the generalized energy (see \cite[Def.2.5.4]{adams_hedberg}).

\begin{proposition}[Thm.2.5.5 in \cite{adams_hedberg}]
Let $K \subset U$ be a compact set of $M$, and $1<p<\infty$. Then
\begin{eqnarray*}
G^U f^K(x) &=& V_p^{\nu^K} \leq 1 \quad \text{for all} \ \ x\in \supp \nu^K, \\[0.2cm]
G^U f^K(x) &=& V_p^{\nu^K} \geq 1 \quad C_{p}\text{-q.e on} \ K.
\end{eqnarray*}
Moreover,
$$C_{p}(K,U) = \max\{ \nu(K) : \nu \in \mathcal{M}^+(K), V_p^\nu (x) \leq 1 \quad \text{for all} \ \ x \in \supp \nu\}.$$
\end{proposition}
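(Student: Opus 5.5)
The plan is to deduce everything from the extremal measure of Corollary \ref{cor_extremal_measure} together with the dual representation of Theorem \ref{dual_cap_thm}. We treat the two pointwise assertions first and then the max formula.

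Let $\nu^K$ be the capacitary measure and $f^K=(G^U\nu^K)^{q-1}$. The identity $G^Uf^K=V_p^{\nu^K}$ is just the definition \eqref{def_potential}. The inequality $V_p^{\nu^K}\ge1$ $C_p$-q.e. on $K$ is already part of Corollary \ref{cor_extremal_measure}.

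For the upper bound on $\supp\nu^K$ we argue in two stages.
\begin{itemize}
\item First we show $V_p^{\nu^K}\le1$ $\nu^K$-a.e. Corollary \ref{cor_extremal_measure} gives $\int V_p^{\nu^K}\,d\nu^K=\nu^K(K)$. So it suffices to rule out a set $A$ of positive $\nu^K$-measure on which $V_p^{\nu^K}>1+\eta$.
\item Suppose such an $A$ exists. Perturb the dual extremal by $\nu_t=\nu^K+t\sigma$, where $\sigma$ is built from $\nu^K$ by removing mass from $A$ and redistributing it to the set $\{V_p^{\nu^K}\le1\}$. This set carries positive mass, because the average of $V_p^{\nu^K}$ against $\nu^K$ equals $1$.
\item Compute the first variation. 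The derivative of $\|G^U\nu_t\|_q^q$ at $t=0$ is $q\int V_p^{\nu^K}\,d\sigma$, while $\nu_t(K)$ is unchanged by the redistribution. Hence the energy strictly decreases, which contradicts the optimality in \eqref{dual_cap_def} after rescaling.
\item Alternatively, use the standard Adams--Hedberg argument: testing the minimality of $f^K$ in the primal problem against $f^K+t h$ gives $\int (f^K)^{p-1}h\ge0$ for admissible directions $h$. This shows $\nu^K$ charges only $\{G^Uf^K\le1\}$ up to polar sets.
\item Pass from $\nu^K$-a.e. to every point of $\supp\nu^K$. For $q\ge2$ the map $y\mapsto (G^U\nu^K)^{q-1}$ is a positive function, and $V_p^{\nu^K}$ is lower semicontinuous. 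The step needed is a maximum principle for nonlinear potentials: if $V_p^\nu\le1$ on $\supp\nu$ except on a $\nu$-null set, then the bound holds everywhere on $\supp\nu$. The paper would cite Adams--Hedberg, Thm.~2.6.3 (boundedness principle) together with lower semicontinuity.
\end{itemize}

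For the max formula, $\nu^K$ is admissible with value $C_p(K,U)$, so it remains to bound an arbitrary admissible measure. Take $\nu$ with $V_p^\nu\le1$ on $\supp\nu$ and set $f=(G^U\nu)^{q-1}$. Then
\[
\nu(K)\le\int G^Uf^K\,d\nu=\int f^K G^U\nu\,d\mu\le\|f^K\|_p\,\|G^U\nu\|_q .
\]
The first inequality uses $G^Uf^K\ge1$ q.e. on $K$; here we note that $\nu$ does not charge polar sets, since its energy is finite. The identity $\|G^U\nu\|_q^q=\int V_p^\nu\,d\nu\le\nu(K)$ gives $\|G^U\nu\|_q\le\nu(K)^{1/q}$. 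Also $\|f^K\|_p=C_p^{1/p}$. Combining these yields $\nu(K)^{1/p}\le C_p^{1/p}$, that is, $\nu(K)\le C_p(K,U)$.

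The main obstacle is the everywhere-on-support upper bound. For this we would rely on the nonlinear maximum principle of Adams--Hedberg, transferred to $g^U$. Its kernel hypotheses (symmetry and lower semicontinuity) hold for the Dirichlet Green kernel.
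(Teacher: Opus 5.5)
\textbf{Overall.} The paper gives no proof of its own here; it quotes Adams--Hedberg. Your overall route is the natural one: derive the pointwise statements from the capacitary measure of Corollary~\ref{cor_extremal_measure}, and get the max formula from $\nu(K)\le\|f^K\|_p\|G^U\nu\|_q\le C_p(K,U)^{1/p}\nu(K)^{1/q}$. That last part is correct as written. You could even skip the q.e.\ discussion by noting that $\nu/\|G^U\nu\|_q$ is admissible in \eqref{dual_cap_def}.

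\textbf{The gap.} The genuine problem is the step you call the main obstacle. The boundedness principle you propose to import is the wrong tool, for two reasons.
\begin{enumerate}
\item In Adams--Hedberg it is established for radially decreasing convolution kernels on $\mathbb{R}^N$ by a Euclidean covering argument. It does not ``transfer'' to $g^U$ merely because $g^U$ is symmetric and lower semicontinuous.
\item Its conclusion is a bound on the whole space by a constant $A$ (in general larger than $1$) times the bound on the support. So it cannot produce the constant $1$ you need.
\end{enumerate}
No maximum principle is needed at all. Since $V_p^{\nu^K}=G^Uf^K$ is lower semicontinuous, the set $\{V_p^{\nu^K}>1\}$ is open. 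An open set meeting $\supp\nu^K$ has positive $\nu^K$-measure. Hence $V_p^{\nu^K}\le 1$ $\nu^K$-a.e.\ already gives $V_p^{\nu^K}\le1$ at every point of $\supp\nu^K$. The restriction to $q\ge 2$ plays no role, since $(G^U\nu^K)^{q-1}\ge 0$ for every $q>1$.

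\textbf{The first stage.} It is valid but far longer than necessary. The first-variation argument does work: moving mass from $\{V_p^{\nu^K}>1+\eta\}$ to $\{V_p^{\nu^K}<1\}$ strictly lowers $\|G^U\nu\|_q^q$ at fixed total mass, because the derivative is $q\int V_p^{\nu^K}\,d\sigma<0$. But the conclusion is immediate from facts you already have.
\begin{enumerate}
\item Since $G^U\nu^K\in L^q(U)$, the measure $\nu^K$ does not charge $C_p$-null sets, by Proposition~\ref{characterizing_capacity_null} and Fubini. This is the same observation you use in the max-formula part.
\item Hence $V_p^{\nu^K}\ge 1$ q.e.\ on $K$ gives $V_p^{\nu^K}\ge1$ $\nu^K$-a.e.
\item Together with $\int V_p^{\nu^K}\,d\nu^K=\nu^K(K)$, this forces $V_p^{\nu^K}=1$ $\nu^K$-a.e.
\end{enumerate}
This, combined with the lower semicontinuity argument above, closes the proof. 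The vague alternative via the first variation of the primal problem can be dropped.
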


Inspired by the variational definition of capacity by means of the generalized energy functional in the classical potential theory, we introduce the following set function which will provide a first alternative definition of $C_{p}$.
\begin{definition}
Let $K \subset M$ be a compact set, and $1<p<\infty$. We define
$$ \text{Cap}_{V_p}(K,U) = \inf \left\lbrace \int_U V_p^\nu d\nu \colon \nu \in \mathcal{M}^+(K), V_p^\nu \geq 1 \ C_{p}\text{-q.e on} \ K \right\rbrace,$$
where $V_p^\nu$ is a capacitary potential for $\nu$.
\end{definition}

In the next result we are going to show the announced equivalence on compact sets.

\begin{theorem}\label{C_H=Cap_H}
Let $K\subset U$ be a compact subset of an open set $U \subset M$, and let $1<p<\infty$. Then
$$ C_{p}(K,U) = \text{Cap}_{V_p}(K,U).$$
\end{theorem}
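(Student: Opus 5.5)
We prove the two inequalities separately, working with the identity $\int_U V_p^\nu\,d\nu=\int_U (G^U\nu)^q\,d\mu$ (Fubini and the symmetry of $g^U$) and with the exponent relation $(q-1)p=q$.

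\emph{Upper bound $\text{Cap}_{V_p}(K,U)\le C_p(K,U)$.} Take the capacitary measure $\nu^K\in\mathcal M^+(K)$ of Corollary \ref{cor_extremal_measure}. By the preceding Proposition (Thm.~2.5.5 of \cite{adams_hedberg}), $V_p^{\nu^K}=G^Uf^K\ge 1$ $C_p$-q.e. on $K$, so $\nu^K$ is admissible for $\text{Cap}_{V_p}(K,U)$. Its energy is
\[
\int_U V_p^{\nu^K}d\nu^K=\int_U (G^U\nu^K)^q\,d\mu=C_p(K,U).
\]

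\emph{Lower bound $C_p(K,U)\le\text{Cap}_{V_p}(K,U)$.} Let $\nu\in\mathcal M^+(K)$ with $V_p^\nu\ge 1$ $C_p$-q.e. on $K$, and assume $E:=\int_U(G^U\nu)^q\,d\mu<\infty$ (otherwise there is nothing to prove). Set $f=(G^U\nu)^{q-1}$. Then $f\ge0$ and
\[
\int_U f^p\,d\mu=\int_U (G^U\nu)^{(q-1)p}\,d\mu=E,
\]
so $f\in L^p_+(U)$. Moreover $G^Uf=V_p^\nu\ge1$ $C_p$-q.e. on $K$, hence $f\in\overline{\Omega^U_K}$ by the description of the closure recalled after Definition~\ref{def_bicapacity}.

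Since the infimum of $\int f^p$ over $\overline{\Omega^U_K}$ equals $C_p(K,U)$ (that is how the extremal $f^K$ is obtained, and equivalently one can use Remark~\ref{general_sign}), we get $C_p(K,U)\le E$. Taking the infimum over all admissible $\nu$ yields $C_p(K,U)\le \text{Cap}_{V_p}(K,U)$.

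\emph{Main point to check.} The only delicate step is that a q.e. inequality on $K$ suffices for admissibility in $C_p$. This holds because the exceptional set $Z\subset K$ has $C_p(Z,U)=0$. Concretely, by Proposition~\ref{characterizing_capacity_null} there is $h\in L^p_+(U)$ with $G^Uh=\infty$ on $Z$. Then for every $\epsilon>0$ the function $f+\epsilon h$ belongs to $\Omega^U_K$, giving
\[
C_p(K,U)\le \|f+\epsilon h\|_p^p\longrightarrow E \quad (\epsilon\to0).
\]
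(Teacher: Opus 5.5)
Your proposal is correct and follows the paper's proof essentially step for step: the capacitary measure $\nu^K$ gives $\text{Cap}_{V_p}(K,U)\le C_p(K,U)$, and $f=(G^U\nu)^{q-1}$, with $(q-1)p=q$, is a test function giving the reverse inequality. Your $f+\epsilon h$ argument via Proposition~\ref{characterizing_capacity_null} spells out why a $C_p$-q.e.\ inequality on $K$ is enough for admissibility, a point the paper leaves implicit.
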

\begin{proof}
By Corollary \ref{cor_extremal_measure}, there is $\nu^K \in \mathcal{M}^+(K)$ such that
$$ C_{p}(K,U) = \int_U \left(G^U\nu^K\right)^{q}d\mu = \int_U V^{\nu^K}_p d\nu ,$$
where $V^{\nu^K}_p$ is the capacitary potential for $K$. Hence, taking infimum in the right hand side we obtain $\text{Cap}_{V_p}(K,U) \leq C_{p}(K,U)$. For the reverse inequality we may assume $\text{Cap}_{V_p}(K,U) < \infty$, for otherwise there is nothing to prove. So, let $\nu \in \mathcal{M}^{+}(K)$ such that $V_p^{\nu} \in L^1(U,d\nu)$ and $V_p^{\nu} (x) \geq 1$ $C_{p}$-q.e on $K$. Define $h(x) =  (G^U\nu)^{q-1}(x)$. Since $pq-p = q$, $V_p^{\nu} = G^U(G^U\nu)^{q-1}$ and $\supp \nu \subset K$, we can conclude that $h \in L^p_{+}(U)$. Furthermore,
$$ G^U h(x) = G^U(G^U\nu)^{q-1}(x) = V_p^{\nu}(x) \geq 1, \quad C_{p}\text{-q.e on} \ K.$$
Therefore,
$$ C_{p}(K,U) \leq \int_U h^p d\mu = \int_U \left(G^U\nu\right)^{q} d\mu= \int_U V_p^{\nu}d\nu$$
and taking infimum we conclude the proof.
\end{proof}

\section{$L^p$-parabolicity and the $L^{q}$-Liouville property}\label{sec_3}

Let $1\leq q\leq \infty$. A manifold $M$ satisfies the $L^{q}$-Liouville property, briefly $M$ is $L^{q}$-Liouville, if every superharmonic function $u \in L^{q}_+(M)$ is constant. Since parabolic manifolds are trivially $L^{q}$-Liouville, we will restrict ourselves to the non-parabolic case, and let $g(x,y)$ be the positive finite Green kernel of $M$. Our main purpose in this section is to show that, for $1< q< \infty$, the $L^{q}$-Liouville property holds on $M$ if and only if $C_{p}(\overline{E},M) = 0$ for some/every pre-compact open set $E\subset M$,  with $pq = p + q$. The $L^1$-Liouville property and its capacitary description will be the topic of Section \ref{sec_L1}.

Extending the usual definition of parabolicity we set the following

\begin{definition}
Let $1\leq p<\infty$. A manifold $M$ is said to be $L^p$-parabolic if $C_{p}(\overline{E}) = 0$ for every pre-compact open set $E\subset M$.
\end{definition}

With this definition we are going to prove that the  $L^q$-Liouville property is equivalent to the $L^p$-parabolicity. As a first step in this direction, generalizing the characterization for the $L^1$-Liouville property, we show that the $L^q$-Liouville property amounts to the non-integrability of $g(x,\cdot)^q$ in the complement of a compact neighborhood of $x \in M$.

\begin{proposition}\label{prop_equiv_Lq}
The $L^q$-Liouville property holds on $M$ for $1\leq q < \infty$ if and only if $g(x,\cdot)^q$ is non-integrable outside a ball $B_r(x)$ centered at $x \in M$ with radius $r>0$.
\end{proposition}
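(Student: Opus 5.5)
We prove both directions; throughout $M$ is non-parabolic with minimal Green kernel $g$. Since $g(x,\cdot)$ is harmonic and locally bounded away from $x$, integrability of $g(x,\cdot)^q$ outside $B_r(x)$ does not depend on $r>0$. By the Harnack inequality (comparing $g(x,\cdot)$ and $g(x',\cdot)$ outside a large ball containing both poles) it does not depend on $x$ either. So the statement is unambiguous, and we argue by contraposition in each direction.

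\emph{Integrability implies failure of Liouville.} Suppose $\int_{M\setminus B_r(x)} g(x,y)^q\,d\mu(y)<\infty$. Set $u=\min\{g(x,\cdot),c\}$ with $c=\sup_{\partial B_r(x)} g(x,\cdot)$, which is finite. Then $u$ is a positive superharmonic function, being the minimum of two superharmonic functions. It is non-constant, because $\inf_M g(x,\cdot)=0$ for the minimal Green function while $u\le c$. On $B_r(x)$ we have $u\le c$, so $u^q$ is integrable on the compact ball. Outside the ball $u\le g(x,\cdot)$, and $g(x,\cdot)^q$ is integrable there by hypothesis. Hence $u\in L^q_+(M)$ and the $L^q$-Liouville property fails.

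\emph{Non-integrability implies Liouville.} Suppose $u\in L^q_+(M)$ is superharmonic and non-constant, hence positive by the strong minimum principle. Fix $x$ and a closed ball $\overline B=\overline{B_r(x)}$. Since $u$ is lower semicontinuous and positive, $m=\min_{\overline B}u>0$. Let $h$ be the equilibrium potential of $\overline B$, i.e. the limit over an exhaustion $\Omega_n$ of the solutions $h_n$ with $h_n=1$ on $\overline B$, $h_n=0$ on $\partial\Omega_n$, harmonic in between. By the comparison principle on $\Omega_n\setminus\overline B$, using $u\ge 0$ on $\partial\Omega_n$, we get $u\ge m h_n$, and letting $n\to\infty$ gives $u\ge m h$ on $M\setminus\overline B$.

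It remains to compare $h$ with the Green function. By the same exhaustion argument there is a constant $C>0$ with $h\ge C\, g(x,\cdot)$ on $M\setminus\overline B$: take $C=1/\max_{\partial B} g(x,\cdot)$ and compare $C g_n(x,\cdot)$ with $h_n$. Therefore $u\ge mC\,g(x,\cdot)$ outside $B$, so $g(x,\cdot)^q$ is integrable outside $B$. This contradicts the hypothesis, hence every $u$ as above is constant.

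The main obstacle is the comparison step. The comparison must be carried out on the bounded sets $\Omega_n\setminus\overline B$ with the lower semicontinuous function $u$; superharmonic functions satisfy the minimum principle there. The limit passage uses the monotone convergence $g_n\nearrow g$ and $h_n\nearrow h$. No capacity theory is needed for this proposition.
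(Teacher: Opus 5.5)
Your proof is correct and follows the paper's argument. In one direction you truncate $g(x,\cdot)$ by a constant to get a non-constant positive superharmonic $L^q$ function; in the other you use the exhaustion/comparison argument to obtain $g(x,\cdot)\le C u$ outside a ball, which the paper states in one line and you spell out. Your intermediate equilibrium potential $h$ is harmless but unnecessary, since you can compare $u$ directly with $m\,g_n(x,\cdot)/\max_{\partial B}g(x,\cdot)$ on $\Omega_n\setminus\overline B$.
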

\begin{proof}
In one direction, if $g(x,\cdot)^q$ is integrable on $M\backslash B_r(x)$, then $u(y) = \min\{g(x,y),1\}$ is a superharmonic function in $L^q_+(M)$ and $M$ is not $L^q$-Liouville. Reciprocally, suppose that  $f \in L^{q}_+(M)$ is a non-identically zero superharmonic function. By the minimum principle, $f$ is positive on $M$. Let $E$ be a pre-compact open set and $x_0 \in M$ such that $B_r(x_0)\subset \overline{E}$. The usual exhaustion argument applied to the Green function centered at $x_0$ gives the existence of a constant $C>0$ such that $g(x_0,y)\leq C f(y)$ for every $y \notin B_r(x_0)$. Then,
\[
\int_{M\backslash B_r(x_0)} g(x_0,y)^{q}d\mu(y) \leq \int_{M\backslash B_r(x_0)} f(y)^{q}d\mu(y) < \infty.
\]
\end{proof}

As a corollary of the above proposition we have the following hierarchy for the $L^q$-Liouville property. We include the case $q=\infty$ for the sake of completeness.

\begin{corollary}\label{hierarchy_Lq}
If $M$ is $L^{q}$-Liouville, then $M$ is also $L^r$-Liouville for every $1\leq r\leq q \leq \infty$.
\end{corollary}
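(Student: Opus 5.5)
The plan is to use Proposition~\ref{prop_equiv_Lq}, which turns the $L^q$-Liouville property for finite $q$ into the non-integrability of $g(x,\cdot)^q$ away from $x$. Assume $M$ is non-parabolic; otherwise every $L^r$-Liouville property holds trivially. Fix $x\in M$ and $r>0$, and set $A=M\setminus B_r(x)$.

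The first step is to note that $g(x,\cdot)$ is bounded on $A$. Indeed, $\min\{g(x,\cdot),C\}$ is controlled by the maximum principle: $g(x,\cdot)$ is harmonic in $M\setminus\{x\}$ and is the limit of the Dirichlet Green functions of an exhaustion. So $\sup_A g(x,\cdot)\le \max_{\partial B_r(x)} g(x,\cdot)=:C<\infty$. The consequence is the pointwise inequality on $A$:
\[
g(x,y)^q = g(x,y)^{r}\,g(x,y)^{q-r}\le C^{q-r} g(x,y)^r \qquad (1\le r\le q<\infty).
\]
Hence if $g(x,\cdot)^r$ were integrable on $A$, then so would be $g(x,\cdot)^q$. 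Contrapositively, non-integrability of $g(x,\cdot)^q$ on $A$ gives non-integrability of $g(x,\cdot)^r$, and Proposition~\ref{prop_equiv_Lq} then yields that $L^q$-Liouville implies $L^r$-Liouville.

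The case $q=\infty$ is handled separately. The $L^\infty$-Liouville property means every bounded positive superharmonic function is constant, which is equivalent to parabolicity, as recalled in the introduction. Since parabolic manifolds are $L^r$-Liouville for every $r$, this case follows. Alternatively, stay in the non-parabolic setting and observe that $\min\{g(x,\cdot),1\}$ is a nonconstant bounded positive superharmonic function, so a non-parabolic $M$ is never $L^\infty$-Liouville; the implication is then vacuous.

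The only point needing care is the boundedness of $g(x,\cdot)$ outside the ball, and this follows from the maximum principle applied to the exhaustion Green functions $g_n(x,\cdot)$. Each $g_n(x,\cdot)$ vanishes on $\partial U_n$ and is harmonic in $U_n\setminus \overline{B_r(x)}$, so it is bounded by its maximum on $\partial B_r(x)$, which is in turn bounded by $\max_{\partial B_r(x)} g(x,\cdot)$. Letting $n\to\infty$ gives the bound on $g(x,\cdot)$ over $A$.
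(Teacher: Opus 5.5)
Your proof is correct and follows the paper's argument: you bound $g(x,\cdot)$ on $M\setminus B_r(x)$ by $\sup_{\partial B_r(x)} g(x,\cdot)$ using the maximum principle on the exhaustion, which gives $g^q\le C^{q-r}g^r$ there, and then conclude with Proposition~\ref{prop_equiv_Lq}. Your separate treatment of $q=\infty$, via the equivalence of the $L^\infty$-Liouville property with parabolicity, spells out a case the paper covers only with ``the conclusion follows immediately''.
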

\begin{proof}
Since the Green kernel $g(x,\cdot)$ is harmonic in $M\backslash\{x\}$, and it is obtained as limit of the Dirichlet Green kernel of an exhaustion, we have that
$$ \sup_{M\backslash B_r(x)} g(x,\cdot) \leq \sup_{\partial B_r(x)} g(x,\cdot),$$
for any $B_r(x)\subset M$. The conclusion follows immediately from Proposition \ref{prop_equiv_Lq}.
\end{proof}

Specializing \eqref{def_potential} to measures with densities, we define  the nonlinear Green operator $G^{q}$ acting  on all non-negative measurable functions $f$ by the formula
\begin{equation}\label{Gp'_kernel}
G^{q} f(x) = G(Gf)^{q-1}(x) = \int_M g(x,z)\left(\int_M g(z,y)f(y)d\mu(y)\right)^{q-1}\!\!\!d\mu(z).
\end{equation}

We begin with the following simple comparison result.

\begin{lemma}\label{green_ivariance}
Given $x_0 \in M$ and two concentric balls  $B_0\subset B_1$ centered at $x_0$, there exists a constant $C>0$, depending only on $B_1$, such that
\[
g(x,z) \leq C g(x',z), \ \ \text{for every} \ \ x,x'\in B_0
\ \ \text{and}\ \ z \in B_1^c.
\]
\end{lemma}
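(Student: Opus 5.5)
The plan is to combine the symmetry of the Green kernel, the local Harnack inequality, and the maximum principle for harmonic functions, working along an exhaustion so that $g$ is obtained as a monotone limit. By symmetry, $g(x,z)=g(z,x)$. For each fixed $z\in B_1^c$, the function $x\mapsto g(z,x)$ is positive and harmonic in $B_1$. Note that $B_1$ is open, so $z\notin B_1$, and $g(z,\cdot)$ is harmonic in $M\setminus\{z\}\supset B_1$.

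The key step is to apply the Harnack inequality on the pair $B_0\Subset B_1$. I will assume $\overline{B_0}\subset B_1$, so that $B_0$ is a pre-compact subset of $B_1$ at positive distance from $\partial B_1$. If the balls are only nested with $B_0=B_1$ allowed, the statement fails near $\partial B_1$, so this is the intended reading. Harnack then gives a constant $C=C(B_0,B_1)$ such that every positive harmonic function $h$ on $B_1$ satisfies
\[
\sup_{B_0} h\le C\inf_{B_0} h .
\]
Applying this with $h=g(z,\cdot)$ yields
\[
g(x,z)=g(z,x)\le C\,g(z,x')=C\,g(x',z)
\]
for all $x,x'\in B_0$. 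The constant does not depend on $z$, since Harnack's constant depends only on the geometry of $B_0\subset B_1$ (via a chaining argument over finitely many small balls covering $\overline{B_0}$, using the local elliptic Harnack inequality).

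The one point requiring care is $z\in\partial B_1$ when $B_1$ is closed, or more generally $z$ very close to $B_1$. If $B_1$ is taken open, then $z\in B_1^c$ still lies outside the open ball, and harmonicity in all of $B_1$ holds, so nothing more is needed. If one prefers to avoid the issue entirely, one can instead use a Harnack inequality on an intermediate ball $B_0\Subset B'\Subset B_1$ with $z\notin B'$. This yields a constant depending only on $B_0$ and $B'$, and hence only on the balls.

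If desired, one can justify the argument first for the Dirichlet kernels $g_n$ of an exhaustion $U_n\supset B_1$, each of which is symmetric and harmonic off the pole. The Harnack constant is uniform in $n$, so passing to the limit $g_n\nearrow g$ gives the claim.

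I expect no real obstacle. The only delicate points are the uniformity of $C$ in $z$, which Harnack provides, and the requirement that $B_0$ be compactly contained in $B_1$.
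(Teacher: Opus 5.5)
Your proof is correct and follows the paper's argument: for $z\in B_1^c$, the map $x\mapsto g(x,z)$ is positive and harmonic on $B_1$, and Harnack's inequality on $\overline{B_0}\Subset B_1$ gives a constant independent of $z$. Your remarks are accurate refinements of the statement as written: one needs $\overline{B_0}\subset B_1$, and the constant in fact depends on both balls.
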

\begin{proof}
It is a simple application of Harnack's inequality once it is noticed that for every $z\in B_1^c$, $x\mapsto g(x,z)$ is a positive harmonic function on $B_1$.
\end{proof}

\begin{remark}\label{rmk_general_x_G}
{\rm As a direct consequence of Lemma \ref{green_ivariance} we have that, for every function $f\in C^{\infty}_c(M)$, if $G^{q} f(x_0) = \infty$ for some point $x_0 \in M$, then $G^{q} f(x) = \infty$ for every point $x \in M$. Indeed, assume the contrary and suppose that there exists $x_1 \in M$ satisfying $G^{q} f(x_1) < \infty$. Let  $B_0\subset B_1$  be concentric balls with $x_0, x_1\in B_0$. By the lemma there exists a constant $C>0$ such that
$$ g(x_0,z) \leq C g(x_1,z), \quad \text{for any} \ \ z \in B_1^c.$$
Since
$$ Gf(z) = \int_M g(z,y)f(y)d\mu(y) \in C^{\infty}(M)$$
is bounded on $\overline{B}_1$ and since the Green kernel is locally integrable,  we have
\begin{eqnarray*}
G^{q} f(x_0) &=& \int_M g(x_0,z)\left(\int_M g(z,y)f(y)d\mu(y)\right)^{q-1}d\mu(z) \\[0.2cm]
&=& \int_{\overline B_1} g(x_0,z)(Gf(z))^{q-1} d\mu(z) + \int_{B_1^c} g(x_0,z)(Gf(z))^{q-1}d\mu(z)\\[0.2cm]
&\leq &  \sup_{\overline B_1}(Gf)^{q-1}\vert\vert g(x_0,\cdot)\vert\vert_{L^1(\overline B_1)} + c\int_{B_1^c} g(x_1,z)(Gf(z))^{q-1}d\mu(z) \\[0.2cm]
&<& \infty,
\end{eqnarray*}
which proves the claim.
Indeed, the above argument also shows that  $G^{q}f(x_0)$ is uniformly bounded  in $B_0$ with a bound which depends only on $G^{q}f(x_1)$, on $ \sup_{\overline B_1}(Gf)^{q-1}$ and on
\[
\vert\vert g(x_0,\cdot )\vert\vert_{L^1(\overline B_1)}\leq \sup_{\overline{B}_0} G\varphi<\infty
\]
where $\varphi\in C^\infty_c(M)$ is such that $\mathbf{1}_{\overline{B}_1}\leq \varphi$.}
\end{remark}

In the next result we establish the relation between the non-integrability of $g(x,\cdot)^q$ outside a ball and the explosion of the nonlinear Green operator $G^{q}$ acting on $C^\infty_c(M)$.

\begin{theorem}\label{equiv_G2}
For $q \in [1,\infty)$ the following assertions are equivalent.
\begin{itemize}
\item[i)] $G^{q} \equiv \infty$.
\item[ii)] $G^{q} f(x_0)= \infty$ for some $x_0\in M$, and some $0\leq f \in C^{\infty}_{c}(M)$.
\item[iii)] There exist $x\in M$ and $r>0$ such that $g(x,\cdot) \notin L^{q}(M\backslash B_r(x))$.
\item[iv)] For every $x\in M$ and every $r>0$, $g(x,\cdot) \notin L^{q}(M\backslash B_r(x))$.
\item[v)] $M$ is $L^q$-Liouville.
\end{itemize}
\end{theorem}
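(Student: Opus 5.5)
We will establish the chain of implications i) $\Rightarrow$ ii) $\Rightarrow$ iii) $\Rightarrow$ iv) $\Rightarrow$ i), and then attach v) via Proposition~\ref{prop_equiv_Lq}.

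The implication i) $\Rightarrow$ ii) is trivial.

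For ii) $\Rightarrow$ iii), take $0\le f\in C^\infty_c(M)$, not identically zero, with $G^q f(x_0)=\infty$. Choose a ball $B_1$ centred at $x_0$ containing $\supp f$. Since $f$ is nonnegative with compact support, $Gf$ is smooth. By the maximum principle argument of Corollary~\ref{hierarchy_Lq}, applied to the harmonic function $Gf$ on $M\setminus \supp f$, we get a bound for $Gf$ outside $B_1$. More precisely, Lemma~\ref{green_ivariance}, used with $x$ ranging in $\supp f\subset B_0$, gives
\[
Gf(z)=\int g(z,y)f(y)\,d\mu(y)\le C\|f\|_{L^1}\, g(x_0,z)\qquad\text{for } z\in B_1^c.
\]
Splitting $G^qf(x_0)$ over $\overline B_1$ and $B_1^c$ as in Remark~\ref{rmk_general_x_G}, the first piece is finite because $Gf$ is bounded on $\overline B_1$ and $g(x_0,\cdot)$ is locally integrable. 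Hence
\[
\infty=\int_{B_1^c} g(x_0,z)\,(Gf(z))^{q-1}\,d\mu(z)\le C'\int_{B_1^c} g(x_0,z)^q\,d\mu(z),
\]
which gives iii) at the point $x_0$ with radius equal to that of $B_1$.

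For iii) $\Rightarrow$ iv), two things must be changed: the radius and the centre.
\begin{itemize}
\item \emph{Radius.} Changing $r$ only alters the integral over an annulus $\overline{B_R(x)}\setminus B_r(x)$, on which $g(x,\cdot)$ is bounded, so finiteness is independent of $r$.
\item \emph{Centre.} For another point $x'$, pick concentric balls $B_0\subset B_1$ containing both $x$ and $x'$. Lemma~\ref{green_ivariance} gives $g(x,z)\le C g(x',z)$ on $B_1^c$, so the non-integrability transfers from $x$ to $x'$.
\end{itemize}

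For iv) $\Rightarrow$ i), take any $0\le f\in C^\infty_c(M)$ with $f\not\equiv0$ and any point $x$. We need a reverse bound $Gf(z)\ge c\,g(x,z)$ for $z$ outside a large ball $B_1$. Choose a ball $B_0\ni x$ on which $f\ge \delta>0$ on some small ball $B'\subset B_0$. Lemma~\ref{green_ivariance} gives $g(x,z)\le C g(y,z)$ for $y\in B_0$ and $z\in B_1^c$, hence
\[
Gf(z)\ge \int_{B'} g(z,y)f(y)\,d\mu(y)\ge \frac{\delta\,\mu(B')}{C}\,g(x,z).
\]
Consequently
\[
G^qf(x)\ge c\int_{B_1^c} g(x,z)^q\,d\mu(z)=\infty
\]
by iv). Since $x$ was arbitrary, $G^qf\equiv\infty$. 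We must also handle $f\equiv0$, where $G^q0=0$; so i) should be read as holding for all nonzero nonnegative $f\in C^\infty_c(M)$, and I would adopt that interpretation.

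Finally, iii)/iv) $\Leftrightarrow$ v) is exactly Proposition~\ref{prop_equiv_Lq}. In that proof, the construction $\min\{g(x,\cdot),1\}\in L^q$ gives one direction, while the comparison $g(x_0,\cdot)\le Cf$ outside a ball, for a positive superharmonic $f\in L^q$, gives the other; the independence of $x$ and $r$ needed there is supplied by iv).

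The main technical point is the comparison of $Gf$ with $g(x,\cdot)$ outside a ball in both directions. This uses that $f$ has compact support together with Lemma~\ref{green_ivariance} applied uniformly in $y$ over the support. The case $q=1$ needs no special treatment, since then $(Gf)^{q-1}\equiv1$ and the statements become direct.
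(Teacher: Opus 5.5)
Your proof is correct and follows the paper's argument essentially step for step: the same two-sided Harnack comparison of $Gf$ with $g(x_0,\cdot)$ outside a ball (Lemma~\ref{green_ivariance}) for ii)$\Rightarrow$iii) and iv)$\Rightarrow$i), the annulus/centre argument for iii)$\Rightarrow$iv), and Proposition~\ref{prop_equiv_Lq} for v). The only cosmetic difference is in iv)$\Rightarrow$i): you get $G^qf(x)=\infty$ at every $x$ directly by taking $B_0$ to contain both $x$ and the ball where $f\ge\delta$, whereas the paper proves it at one point and then propagates it using Remark~\ref{rmk_general_x_G}.
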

\begin{proof}
The equivalence {$\bf iv) \Leftrightarrow v)$} is the content of Proposition \ref{prop_equiv_Lq}.\\ 
{$\bf i) \Rightarrow ii):$} Obvious.\\
{$\bf ii) \Rightarrow iii):$}  Let $ f \in C^{\infty}_c(M)$ be a non-negative function and let  $x_0 \in M$ be such that $G^{q} f(x_0) = \infty$.
Take $r>0$ sufficiently large to have $\mathrm{supp}\,f \subset B_r(x_0)$. By the local integrability of $g(x_0,\cdot)$,
\begin{multline}
\label{finiteness}
\int_{B_{2r}(x_0)}g (x_0,z)\left(\int_M g(z,y)f(y) d\mu(y)\right)^{q-1}d\mu(z)\\= \int_{B_{2r}(x_0)}g (x_0,z)\left(\int_{B_r(x_0)} g(z,y)f(y) d\mu(y)\right)^{q-1}d\mu(z) <\infty.
\end{multline}

On the other hand, by Lemma \ref{green_ivariance}, there exists $C>0$, which depends only on $B_{2r}(x_0)$, such that
$$
g(z,y)\leq C g(z, x_0)\quad \forall z\not\in B_{2r}(x_0) \,\text{ and }\, y\in B_{r}(x_0).
$$
Therefore,
\begin{multline}\label{non_L_2}
\int_{B_{2r}(x_0)^c} g(x_0,z) \left(\int_M g(z,y)f(y) d\mu(y)\right)^{q-1} d\mu(z)
 \\ \leq (C ||f||_{L^1(M)})^{q-1} \int_{B_{2r}(x_0)^c} g(x_0,z) g(z,x_0)^{q-1} d\mu(z)\\
\leq (C ||f||_{L^1(M)})^{q-1} \int_{B_{r}(x_0)^c} g(x_0,z) ^{q} d\mu(z).
\end{multline}
The assumption and \eqref{finiteness} imply that the left hand side in \eqref{non_L_2} is infinite and we conclude that $g(x_0,\cdot) \notin L^{q}(M\backslash B_r(x_0))$.\\
{$\bf iii) \Rightarrow iv):$} Assume that $g(x_0,\cdot)\notin L^{q}(M\backslash B_r(x_0))$ for given $x_0\in M$ and $r_0>0$. Since $g(x_0,\cdot)$ is bounded in every annulus centered at $x_0$ it follows easily that $\mathbf{iv)}$ holds for $x_0 \in M$ and for every $r>0$. Now, given $x \in M$,
Lemma \ref{green_ivariance} applied to the  ball $B_R(x_0)$ with $R=d(x,x_0)+1$ shows that there exists  $C>0$ such that $g(x_0,z) \leq C g(x,z)$ for every $z \in M\backslash B_R(x_0)$. Thus,
\[
\begin{split}
\infty = C^{-q}\int_{M\backslash B_R(x_0)}g(x_0,z)^{q}\,d\mu(z) &\leq \int_{M\backslash B_R(x_0)}g(x,z)^{q}\,d\mu(z)\\ \leq  &\int_{M\backslash B_1(x)}g(x,z)^{q}\,d\mu(z).
\end{split}
\]
{$\bf iv) \Rightarrow i):$} Given a non-identically zero function $0\leq f \in C^{\infty}_c(M)$ let $B_{r}(x_0) \subset \supp f$ such that $\min_{B_{r}(x_0)} f = C_0 >0$.  Using Lemma \ref{green_ivariance} we can compute
\begin{multline*}
\int_{M\backslash B_r(x_0)} g(x_0,z)^{q}d\mu(z) = \int_{M\backslash B_r(x_0)} g(x_0,z)[g(x_0,z)]^{q-1}d\mu(z) \leq  \\[0.2cm]
 \int_{M\backslash B_r(x_0)} g(x_0,z)\left[\frac{c}{C_0\mu(B_r(x_0))}\int_{B_{r}(x_0)} g(z,y)f(y) d\mu(y)\right]^{q-1}d\mu(z),
\end{multline*}
which yields
$$\int_{M\backslash B_r(x_0)} g(x_0,z)^{q}d\mu(z) \leq \left(\frac{c}{C_0\mu(B_r(x_0))}\right)^{q-1} G^{q}f(x_0).$$
Thus, by our assumption we have $ G^{q}f(x_0) = \infty$ and the conclusion follows by Remark \ref{rmk_general_x_G}.
\end{proof}

\begin{remark}\label{rmk_outside_U}
{\rm We observe for future use that the above proof actually shows that $G^{q} \equiv \infty$ if and only if, for some/every $x\neq y$ in $M$ and some/every pre-compact open set $U$ containing $x$ and $y$,
$$\int_{M\backslash U}g(x,z)g(z,y)^{q-1}d\mu(z) = \infty.$$}
\end{remark}

\begin{remark}\label{rmk_uniform_L2}
{\rm If
$G^{q} < \infty$, then Theorem \ref{equiv_G2} can be used to prove that for every $x \in M$, and every $r>0$, there exists a constant $C>0$, which depends only upon $B_{2r}(x)$, such that
\begin{eqnarray*}
\int_{M\backslash B_{2r}(x)} g(y,z)^{q} d\mu(z) < C \quad \text{for all } \ y \in B_r(x).
\end{eqnarray*}
Indeed, by assumption  and Theorem \ref{equiv_G2},
\[
\int_{M\backslash B_{2r}(x)}g(x,z)^{q} d\mu(z)<\infty,
\]
and the conclusion follows again from Lemma~\ref{green_ivariance}.}
\end{remark}

\begin{proposition}\label{equiv_lemma_faraji_grig}
$M$ is $L^q$-Liouville, for $q \in [1,\infty)$, if and only if for every positive superharmonic function $f \colon M \rightarrow \mathbb{R}$, we have $G[f^{q-1}](x) \equiv \infty$ for some/every $x \in M$.
\end{proposition}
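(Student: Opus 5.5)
Both directions rest on Theorem~\ref{equiv_G2} together with the comparison estimate used in Proposition~\ref{prop_equiv_Lq}: for every positive superharmonic $f$ and every ball $B_r(x_0)$ there is $C>0$ with $g(x_0,y)\le C f(y)$ for $y\notin B_r(x_0)$. I take the paper's convention that $M$ is non-parabolic and $g$ is finite. The "some/every $x$" clause will be handled by the same Harnack-type argument as in Remark~\ref{rmk_general_x_G}.

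\emph{Step 1: the clause "some/every $x$".} Fix $x_0,x_1$ and a ball $B_1$ containing both. By Lemma~\ref{green_ivariance}, $g(x_0,\cdot)\le C g(x_1,\cdot)$ outside $B_1$. On $\overline B_1$ the superharmonic $f$ is positive and lower semicontinuous, so it has a positive minimum; it is also bounded above on $\overline B_1$ except at its singularities. I expect the contribution of $\overline B_1$ to $G[f^{q-1}]$ to be the main nuisance, since $f$ may blow up there.

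\begin{itemize}
\item When $q\ge 1$, I would avoid the blow-up issue by noting that only infiniteness matters.
\item In the direction "finite at $x_1$ $\Rightarrow$ finite at $x_0$", the inner part $\int_{\overline B_1} g(x_0,z)f^{q-1}\,d\mu$ might be infinite. Since I only need to show $G[f^{q-1}]\equiv\infty$ everywhere, I would replace $f$ by $\min\{f,1\}$, which is again positive superharmonic.
\end{itemize}

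The key observation is that $G[f^{q-1}]=\infty$ is a statement about integrability at infinity: the inner region contributes a finite amount for bounded $f$, and truncation only decreases $f$. So I will prove the result for bounded $f$ and derive the rest by monotonicity.

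\emph{Step 2: ($\Rightarrow$).} Assume $M$ is $L^q$-Liouville and let $f>0$ be superharmonic. Pick $x_0$ and $r$. Then
\[
G[f^{q-1}](x_0)\ge \int_{M\setminus B_r(x_0)} g(x_0,z)f(z)^{q-1}\,d\mu(z)\ge C^{1-q}\int_{M\setminus B_r(x_0)} g(x_0,z)^q\,d\mu(z)=\infty,
\]
using the comparison $g(x_0,\cdot)\le Cf$ and item iv) of Theorem~\ref{equiv_G2}. This holds at every $x_0$.

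\emph{Step 3: ($\Leftarrow$).} Apply the hypothesis to $f=\min\{g(x_0,\cdot),1\}$, which is positive and superharmonic. Then
\[
\infty=G[f^{q-1}](x_0)=\int_{B_r}g(x_0,z)f^{q-1}\,d\mu+\int_{M\setminus B_r}g(x_0,z)f^{q-1}\,d\mu .
\]
The first term is finite because $f\le 1$ and $g(x_0,\cdot)$ is locally integrable. In the second term, $f\le g(x_0,\cdot)$ and $q-1\ge0$, so it is at most $\int_{M\setminus B_r} g(x_0,z)^q\,d\mu$. Hence $g(x_0,\cdot)\notin L^q(M\setminus B_r)$, and Theorem~\ref{equiv_G2} (iii$\Rightarrow$v) gives the $L^q$-Liouville property. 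The hypothesis at "some" $x$ suffices, by choosing $x_0$ to be that point.

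When $q=1$, $f^{q-1}\equiv1$ and everything reduces to $G1\equiv\infty$, which is consistent with the argument above. The only delicate point I foresee is the $\overline B_1$ contribution in the "every $x$" claim for unbounded $f$. In Step 2, however, infiniteness is obtained directly at each point, so the "every" version is automatic there. In Step 3, "some" is all that is needed.
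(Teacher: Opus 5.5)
Your proof is correct and follows the paper's argument. The forward direction uses the comparison $g(x_0,\cdot)\le Cf$ off a ball together with Theorem~\ref{equiv_G2}, and the converse tests the hypothesis on a truncated Green function. Your explicit splitting with $f\le 1$ on the ball is in fact more careful than the paper's bound by $\int_M g(x,z)g(z,y)^{q-1}\,d\mu(z)$, whose local part need not be finite.

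The only slip is the phrase ``choosing $x_0$ to be that point''. It is circular, because the point supplied by the hypothesis depends on $f=\min\{g(x_0,\cdot),1\}$, which was built from $x_0$. Instead, invoke your Step~1 observation: for this bounded $f$, infiniteness of $G[f^{q-1}]$ at one point propagates to every point via Lemma~\ref{green_ivariance}.
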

\begin{proof}
Assume that $M$ is $L^q$-Liouville and let $f$ be a positive superharmonic function on $M$. Fix $y \in M$ and let $U$ be a pre-compact neighborhood of $y$. As in Proposition \ref{prop_equiv_Lq} there is a constant $c>0$ such that
$$ g(z,y) \leq c f(z), \ \ \text{for any} \ \ z \notin U.$$
So that, raising to power $q-1$ and integrating we obtain
$$ G[f^{q-1}](x) \geq c^{-1}\int_{M\backslash U} g(x,z)g(z,y)^{q-1}d\mu(z) = \infty.$$
The conclusion then follows by Remark \ref{rmk_outside_U}. Conversely, let $\delta>0$ be a small constant such that $m = \inf_{B_{\delta}(y)}g(y,\cdot) >  \sup_{\partial B_1(y)}g(y,\cdot)$ and define $f = \min\{m,g(y,\cdot)\}$. Then $f$ is a positive superharmonic function in $M$ and
$$ \int_M g(x,z)g(z,y)^{q-1}d\mu(z) \geq G[f^{q-1}](x) = \infty,$$
and the conclusion follows by Theorem \ref{equiv_G2}.
\end{proof}

\begin{remark}\label{rem_lemma_faraji_grig}
In Proposition \ref{equiv_lemma_faraji_grig} we can use positive functions defined and harmonic in a complement of a compact set. In this case, if we fix a compact set $E\subset M$, we then have
$$ G^q \equiv \infty \quad \text{if and only if} \quad \int_{M\backslash E} g(x,y)h(y)^{q-1} d\mu(y) = \infty.$$
\end{remark}

As an application of Theorem \ref{equiv_G2} we now extend, from the classical case $q=\infty$, the characterization of the $L^q$-Liouville property in terms of the existence of pre-compact open sets with zero $L^{p}$-capacity  to the range $1< q \leq \infty$, with $p$ and $q$ H\"older conjugate exponents. The case $p=1$ is included in the statement only for completeness.

\begin{theorem}\label{characterization_biparb_capacity}
Let $M$ be a manifold and $p \in [1,\infty)$. Then $C_{p}(\overline{E}) = 0$ for some/every pre-compact open subset $E \subset M$ if and only if $M$ is $L^q$-Liouville. Therefore, $M$ is $L^p$-parabolic if and only if it is $L^q$-Liouville. 
\end{theorem}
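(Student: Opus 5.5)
The plan is to route everything through Theorem \ref{equiv_G2}, proving the equivalence $C_p(\overline E)=0 \iff G^q\equiv\infty$ for $1<p<\infty$. The case $p=1$ ($q=\infty$) is the classical statement: by Remark \ref{rmk_equiv_C1} together with Theorems \ref{parabolicity_measure_dual}, \ref{BMP-Thm} and \ref{Cap2,2_C_g2}, $C_1(\overline E)=0$ iff $\mathrm{Cap}(\overline E)=0$. This is equivalent to parabolicity, and hence to the $L^\infty$-Liouville property. So from now on take $1<p<\infty$.

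\emph{Liouville $\Rightarrow$ zero capacity.} Suppose $M$ is $L^q$-Liouville, and let $E$ be any pre-compact open set. I would use the dual formula of Theorem \ref{dual_cap_thm}. It suffices to show that every $\nu\in\mathcal M^+(\overline E)$ with $\nu\neq0$ has $\|G\nu\|_{L^q(M)}=\infty$, since then the supremum in \eqref{dual_cap_def} is over $\nu=0$ only and $C_p(\overline E)=0$. Fix a ball $B\supset \overline E$ centered at some $x_0$, and a larger concentric ball $B_1$. By Lemma \ref{green_ivariance} (with the roles adjusted via symmetry of $g$), for $z\notin B_1$ and $y\in\overline E$ we have $g(z,y)\ge C^{-1}g(z,x_0)$. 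Hence $G\nu(z)\ge C^{-1}\nu(\overline E)\,g(x_0,z)$ on $M\setminus B_1$. Then
\[
\int_M (G\nu)^q\,d\mu\ \ge\ C^{-q}\nu(\overline E)^q\int_{M\setminus B_1} g(x_0,z)^q\,d\mu(z)=\infty
\]
by item iv) of Theorem \ref{equiv_G2}. This gives the "every" version.

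\emph{Zero capacity for some $E$ $\Rightarrow$ Liouville.} Suppose $C_p(\overline E)=0$ for one pre-compact open $E$, and assume by contradiction that $M$ is not $L^q$-Liouville. Then by Theorem \ref{equiv_G2}, $g(x,\cdot)\in L^q(M\setminus B_r(x))$ for all $x$ and $r$. Moreover, Remark \ref{rmk_uniform_L2} gives this uniformly for $x$ in a ball. I would build a measure $\nu$ on $\overline E$ with $0<\|G\nu\|_q<\infty$, which forces $C_p(\overline E)^{1/p}\ge \nu(\overline E)/\|G\nu\|_q>0$, a contradiction. The natural choice is $\nu=\mathbf 1_{B}\mu$ for a small ball $B=B_\delta(x_0)\subset E$. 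Outside $B_{2\delta}(x_0)$, Lemma \ref{green_ivariance} gives $G\nu(z)\le C\mu(B)\,g(x_0,z)$, which lies in $L^q$ there by iii)/iv). Inside the compact set $\overline{B_{2\delta}(x_0)}$, $G\mathbf 1_B$ is bounded, since it is a Newtonian-type potential of a bounded compactly supported function (the local bound on $g$ used in the proof of Proposition \ref{C_g2_test_functions}). So $G\nu\in L^q(M)$ and $\nu(\overline E)>0$.

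The main obstacle is only bookkeeping: justifying the Harnack comparisons in both directions with uniform constants, which Lemma \ref{green_ivariance} and Remark \ref{rmk_uniform_L2} provide, and the local boundedness of $G\mathbf 1_B$ near $B$. Since "some $E$" with zero capacity implies Liouville, which in turn implies zero capacity for "every $E$", the some/every statement follows. The final sentence of the theorem follows from the definition of $L^p$-parabolicity.
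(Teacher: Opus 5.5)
Your proof is correct and follows essentially the paper's route. The first direction is the same duality-plus-Harnack argument: the paper first reduces to unit balls by subadditivity, while you apply Lemma \ref{green_ivariance} directly on a ball containing $\overline E$. The second direction uses the same pairing idea. You invoke the dual formula for the test measure $\mathbf 1_B\mu$ with a pointwise Harnack bound, whereas the paper pairs a near-optimal $f$ against $\chi_{F_x}$ via H\"older and controls $\|G\chi_{F_x}\|_{L^q}$ by Minkowski's inequality and Remark \ref{rmk_uniform_L2}.

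One small justification should be tightened. For the local boundedness of $G\mathbf 1_B$, compare with $G\psi$ for some $0\le\psi\in C^\infty_c(M)$ with $\psi\ge\mathbf 1_B$, as the paper does. The estimate you cite from Proposition \ref{C_g2_test_functions} concerns Dirichlet Green kernels of bounded domains, not the global $g$.
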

\begin{proof}
By Theorem \ref{equiv_G2} we assume $G^{q}\equiv \infty$ and let $E\subset M$ be a pre-compact open set. Since the $C_{p}$-capacity is monotone and subadditive and
$E$ is pre-compact, it suffices to prove that $C_p(B_1(y_0)) = 0$ for any $y_0 \in M$. As in the proof of Theorem \ref{equiv_G2}, by Lemma \ref{green_ivariance} there exists $C>0$, depending only on $B_2(y_0)$, such that $g(y_0,z) \leq Cg(y,z)$ for all $y \in B_1(y_0)$ and for all $z \notin B_2(y_0)$. Given any measure $0\ne \nu \in \mathcal{M}^+(B_1(y_0))$ we estimate
\begin{eqnarray*}
\vert\vert G\nu\vert\vert_{L^{q}(M)} &\geq & \left(\int_{M\backslash B_2(y_0)} \left( \int_{B_1(y_0)} g(y,z)d\nu(y)\right)^{q} d\mu(z)\right)^{\frac{1}{q}} \\[0.2cm]
& \geq & C^{-1}\nu(B_1(y_0)) \left(\int_{M\backslash B_2(y_0)} g(y_0,z)^{q} d\mu(z)\right)^{\frac{1}{q}} \\[0.2cm]
&=& \infty,
\end{eqnarray*}
where in the last equality we have applied Theorem \ref{equiv_G2} item ${\bf iv)}$. Thus, if $\vert\vert G\nu\vert\vert_{L^{q}(M)}\leq 1$, $\nu\equiv 0$ and by Theorem \ref{dual_cap_thm} and Remark~\ref{rmk_equiv_C1} we have $C_{p}(B_1(y_0)) = 0$.

For the converse, we are going to show that if $G^{q} < \infty$ then $C_{p}(\overline E)>0$ for every pre-compact open subset $E$ with $\mu(\overline{E})>0$. So, assume $G^{q} < \infty$ and that there exists some pre-compact open subset $E$ such that $C_{p}(\overline E)=0$. For every $x$, let $F_x=B_1(x)\cap \overline E$ whenever the right hand side is not empty.

Notice first of all that since $G^{q} < \infty$, then $G\chi_{F_x}\in L^{q}(M).$  Indeed, if $\psi\in C_c^\infty(M)$ is non-negative and $\psi=1$ on $B_1(x)$, then
\[
\begin{split}
\int_{B_2(x)} (G\chi_{F_x})^{q}(y)d\mu(y)&\leq  \int_{B_2(x)}\left(\int_Mg(y,z)\psi(z)d\mu(z)\right)^{q} d\mu(y)\\
&\leq\mu(B_2(x)) \sup_{\overline B_2(x)} |G\psi|^{q} <\infty.
\end{split}
\]
On the other hand,  by Remark \ref{rmk_uniform_L2}, there exists a constant $C$ depending only on $B_2(x)$ such that
\[
\int_{M\backslash B_2(x)} g(y,z)^{q} d\mu(y)\leq C, \quad \forall z\in B_1(x),
\]
and therefore,  by Minkowski's integral inequality,
\[
\begin{split}
\left[\int_{M\backslash B_2(x)} (G\chi_{F_x} (y))^{q} d\mu(y) \right]^{\frac{1}{q}}&=\left[\int_{M\backslash B_2(x)}  \left(\int_{F_x} g(y,z)d\mu(z)\right)^{q} d\mu(y)\right]^{\frac{1}{q}} \\
&\leq \int_{F_x} \left(\int_{M\backslash B_2(x)} g(y,z)^{q} d\mu(y)\right)^{\frac{1}{q}}d\mu(z) \\
&\leq  C^{\frac{1}{q}} \mu(B_1(x)),
\end{split}
\]
and our claim follows.

Next, since $C_{p}(\overline E)=0$, by definition of capacity, for every $\epsilon>0$ there exists a function  $f\in L^p_+(M)$ such that $Gf\geq 1$  on $\overline E$ and
\[
\int_M f^p d\mu<\epsilon^p.
\]
We can therefore estimate
\begin{equation*}
\begin{split}
\mu(F_x)&\leq \int_{F_x} Gf(y)d\mu(y)\\
&= \int_M\chi_{F_x}(y)Gf(y)d\mu(y) \\
&= \int_{M\times M} g(y,z)f(z)\chi_{F_x}(y)  d\mu(z)d\mu(y) \\
&=\int_M f(z)G\chi_{F_x}(z)d\mu(z)\\
&\leq ||f||_{L^p(M)} ||G\chi_{F_x}||_{L^{q}(M)}<\epsilon ||G\chi_{F_x}||_{L^{q}(M)},
\end{split}
\end{equation*}
and, letting $\epsilon\to 0$, we conclude that
\[
\mu(F_x)=\mu(B_1(x)\cap E)=0
\]
for every $x$ such that $B_1(x)\cap \overline{ E}\ne \emptyset$. Since
$\overline{E} $ is compact, it can be covered by a countable family of balls $B_1(x_k)$. Hence,
$\overline{ E}=\cup_k F_{x_k}$ and by subadditivity we conclude that $\mu(\overline{ E})=0$, as required. 
\end{proof}

\begin{remark}
{\rm In the above proof, since $M$ is second countable, we have proved that if $G^{q} < \infty$ then $C_{p}(K)=0$ implies  $\mu(K)=0$ for every compact set $K$. It means that, in this case, the capacity $C_{p}$ gives a more refined measure than the Lebesgue one. Furthermore, if $G^{q} = \infty$ then $C_{p}(E)=0$ for all subsets $E\subset M$. }
\end{remark}

\begin{corollary}\label{cor_inv_zero_cap}
There is a pre-compact open subset $E \subset M$ with $C_{p}(E) > 0$ if and only if $C_{p}(F) > 0$ for every pre-compact open set $F \subset M$.
\end{corollary}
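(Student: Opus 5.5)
The plan is to read the corollary off the dichotomy established in Theorem \ref{characterization_biparb_capacity} and in the remark following it, applied with $p\in[1,\infty)$ and $q$ its H\"older conjugate. The implication ``if'' is trivial: if every pre-compact open set has positive capacity, then in particular some pre-compact open set does. For the implication ``only if'' I would argue by contraposition. Suppose $C_{p}(F)=0$ for some pre-compact open set $F$; we must show that $C_p(E)=0$ for every pre-compact open set $E$.

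The first step is to pass from $F$ to its closure, because Theorem \ref{characterization_biparb_capacity} is phrased for $\overline{F}$. Here I would use the converse part of that proof. If $G^{q}<\infty$, the argument there shows that $C_p(K)=0$ forces $\mu(K)=0$, and it only uses functions $f\in L^p_+(M)$ with $Gf\ge 1$ on the set in question. Running the same argument with $F_x=B_1(x)\cap F$ in place of $B_1(x)\cap\overline E$ then gives $\mu(F)=0$. This is impossible, since $F$ is a nonempty open set (if $F=\emptyset$ the statement is vacuous, so we may assume $F\neq\emptyset$).

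Hence $G^{q}\equiv\infty$. By Theorem \ref{equiv_G2} this means $M$ is $L^q$-Liouville, and the first half of the proof of Theorem \ref{characterization_biparb_capacity} then gives $C_p(\overline{E})=0$ for every pre-compact open $E$. Monotonicity of $C_p$ yields $C_p(E)\le C_p(\overline E)=0$, which completes the contrapositive.

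The only point needing care is the passage from an open set to its closure. I expect this to be the main obstacle, since capacity is defined for arbitrary sets and the theorem is stated for closures. The measure argument above avoids any regularity issue, because it never uses compactness except to cover by countably many balls, and that covering is available for $F$ as well by second countability. For $p=1$ the same reasoning applies verbatim, since the converse argument in Theorem \ref{characterization_biparb_capacity} did not use duality.
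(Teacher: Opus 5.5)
Your proof is correct and is essentially the argument the paper intends: the corollary is read off the dichotomy behind Theorem \ref{characterization_biparb_capacity} and the remark following it, where $G^{q}\equiv\infty$ makes every capacity vanish, while $G^{q}<\infty$ forces $\mu(K)=0$ whenever $C_p(K)=0$. The passage from $F$ to $\overline F$ that you flag is even easier than rerunning the measure argument, since a nonempty open $F$ contains a closed ball $K$ with $\mu(K)>0$, so $C_p(F)\ge C_p(K)>0$ by monotonicity; note also that for $p=1$ the reasoning is not quite ``verbatim'', because $G^{q}$ is not defined for $q=\infty$ and the dichotomy must instead be read as ``parabolic versus non-parabolic'' (with the boundedness of $G\chi_{F_x}$ coming from the maximum principle).
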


As a notable consequence of Theorem \ref{characterization_biparb_capacity} and Corollary \ref{hierarchy_Lq} we can derive the hierarchy of the $L^p$-parabolicity.

\begin{corollary}\label{cor_hier}
If $M$ is $L^{p}$-parabolic, then $M$ is also $L^s$-parabolic for every $1\leq p\leq s < \infty$.
\end{corollary}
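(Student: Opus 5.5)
The plan is to translate $L^p$-parabolicity into a Liouville statement, use the monotonicity of the Liouville property in the exponent, and translate back. Let $1\le p\le s<\infty$, and let $q$ and $r$ be the Hölder conjugates of $p$ and $s$, so $q=p/(p-1)$ and $r=s/(s-1)$ (with $q=\infty$ when $p=1$). Since $t\mapsto t/(t-1)$ is decreasing on $(1,\infty)$ and $p\le s$, we get $1<r\le q\le\infty$.

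The steps are as follows.

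\begin{enumerate}
\item By Theorem \ref{characterization_biparb_capacity}, $L^p$-parabolicity of $M$ is equivalent to $M$ being $L^q$-Liouville.
\item By Corollary \ref{hierarchy_Lq}, the $L^q$-Liouville property implies the $L^r$-Liouville property whenever $1\le r\le q\le\infty$. So $M$ is $L^r$-Liouville.
\item Applying Theorem \ref{characterization_biparb_capacity} again, now with the exponent $s$ and its conjugate $r$, $M$ is $L^s$-parabolic.
\end{enumerate}

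The only point needing care is the endpoint $p=1$, i.e.\ $q=\infty$, where Theorem \ref{characterization_biparb_capacity} is invoked at the boundary of its range. I would handle it separately and directly. The $L^\infty$-Liouville property is ordinary parabolicity, and Corollary \ref{hierarchy_Lq} explicitly allows $q=\infty$. On the capacity side, $C_1$ vanishes on all compacts exactly when $M$ is parabolic, by Remark \ref{rmk_equiv_C1} together with the convention $g\equiv\infty$ in the parabolic case. Alternatively, one can argue directly that a parabolic manifold is trivially $L^r$-Liouville for every $r$, hence $L^s$-parabolic.

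For the trivial case $s=p$ there is nothing to prove. The case of a parabolic $M$ is also harmless, since then every $C_s$-capacity vanishes. I do not expect any step to be a real obstacle: the whole argument is a composition of the two earlier results, and beyond that it only requires checking that the conjugate-exponent map reverses the order.
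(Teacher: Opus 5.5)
Your proposal is correct and follows the paper's route exactly. Theorem \ref{characterization_biparb_capacity} converts $L^p$-parabolicity into the $L^q$-Liouville property, Corollary \ref{hierarchy_Lq} descends to the smaller conjugate exponent ($p\le s$ gives $r\le q$), and the same theorem converts back; your separate treatment of the endpoint $p=1$ is a harmless extra precaution, since the paper states that theorem for all $p\in[1,\infty)$.
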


\label{subsec_bip}

In the recent paper \cite{faraji_grigoryan} the authors introduced the concept of biparabolicity for a Riemannian manifold in terms of a Liouville type condition for the bilaplacian operator. A function $u \in C^4(M)$ is said to be bi-superharmonic if $\Delta u\leq 0$ and $\Delta^2 u \geq 0$.
\begin{definition}
A manifold $M$ is biparabolic if any positive bi-superharmonic function $u \in C^4(M)$ is harmonic, $\Delta u = 0$.
\end{definition}

One of the main results in \cite{faraji_grigoryan}, Theorem 3.1, is a characterization of the biparabolicity of $M$ by the explosion of the Green operator $G^2$ defined in \eqref{Gp'_kernel}. Thus, Theorems \ref{equiv_G2} and \ref{characterization_biparb_capacity} give rise to the following application.

\begin{corollary}
A manifold $M$ is biparabolic if and only if $M$ is $L^2$-parabolic, equivalently, $L^2$-Liouville.
\end{corollary}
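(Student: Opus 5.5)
The plan is to chain three equivalences. The first is the Faraji--Grigor'yan characterization of biparabolicity (Theorem 3.1 of \cite{faraji_grigoryan}). The other two are Theorems \ref{equiv_G2} and \ref{characterization_biparb_capacity} of the present paper, specialized to $p=q=2$, since $2$ is self-conjugate.

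First I will recall precisely what \cite[Thm.3.1]{faraji_grigoryan} states. $M$ is biparabolic if and only if either $M$ is parabolic, or $M$ is non-parabolic and $G^2 f = G(Gf)\equiv\infty$ for some, equivalently every, non-negative nonzero $f\in C_c^\infty(M)$. With the notation \eqref{Gp'_kernel} and $q=2$, this is exactly $G^q f = G(Gf)^{q-1}$. So in the non-parabolic case biparabolicity coincides with item i), or equivalently item ii), of Theorem \ref{equiv_G2}. The main point to check here is that the normalizations agree. Faraji--Grigor'yan may phrase the condition as $\int_{M} g(x,z)g(z,y)\,d\mu(z)=\infty$ for some/every $x,y$, or as divergence outside a compact set. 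By Remark \ref{rmk_outside_U} with $q=2$, that formulation is again equivalent to $G^2\equiv\infty$. Local integrability of $g(x,\cdot)$ and $g(\cdot,y)$ handles the compact part of the integral.

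Next, Theorem \ref{equiv_G2} (i)$\Leftrightarrow$(v) with $q=2$ gives that $G^2\equiv\infty$ is equivalent to $M$ being $L^2$-Liouville. Theorem \ref{characterization_biparb_capacity} with $p=2$, $q=2$ then gives that $L^2$-Liouville is equivalent to $L^2$-parabolic.

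Finally, I will dispose of the parabolic case separately. If $M$ is parabolic, then every positive superharmonic function is constant, so any positive bi-superharmonic $u$ has $\Delta u\le 0$ and is therefore constant; hence $M$ is biparabolic. It is also trivially $L^2$-Liouville, and, since $g\equiv\infty$, every compact set has zero $C_2$-capacity, so $M$ is $L^2$-parabolic. Thus all three notions hold simultaneously in the parabolic case.

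The only real obstacle is the first step: verifying that the hypotheses and formulation of \cite[Thm.3.1]{faraji_grigoryan} match the operator $G^2$ of \eqref{Gp'_kernel}. If their statement is phrased with a kernel integral rather than with $G(Gf)$, I will translate it via Remark \ref{rmk_outside_U}, or alternatively via Proposition \ref{equiv_lemma_faraji_grig}, which was set up precisely to mirror their lemma.
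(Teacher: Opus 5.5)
Your proposal is correct and follows the paper's argument: the paper likewise combines \cite[Thm.3.1]{faraji_grigoryan} (biparabolicity $\Leftrightarrow$ $G^2\equiv\infty$) with Theorem \ref{equiv_G2} for $q=2$ and Theorem \ref{characterization_biparb_capacity} for $p=q=2$. Your separate treatment of the parabolic case only makes explicit what the paper covers by its standing non-parabolicity assumption; if you do translate a kernel formulation via Remark \ref{rmk_outside_U}, take $x\neq y$, since $\int g(x,z)^2\,d\mu(z)$ already diverges near $x$ when $\dim M\geq 4$.
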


In this direction, we note that Proposition \ref{equiv_lemma_faraji_grig} and Remark \ref{rem_lemma_faraji_grig} extend \cite[Lem.3.2]{faraji_grigoryan}.

\section{A capacitary approach to the $L^1$-Liouville property}\label{sec_L1}

Since the positive minimal Green kernel is locally integrable, according to Proposition \ref{prop_equiv_Lq}, the $L^1$-Liouville property on a manifold $M$ is equivalent to the non-integrability of $g(x,\cdot)$ (see also \cite{grigoryan_89,grigoryan_bams}). From this one can easily deduce that every stochastically complete manifold satisfies the $L^1$-Liouville property and, on model manifolds, the two properties are equivalent. In general this equivalence is not true in any dimension (see \cite{bessa_pigola_setti,pessoa_pigola_setti}). Note that, if $M$ is not $L^1$-Liouville, the function
$$ E(x) = \int_M g(x,y)d\mu(y),$$
which is usually referred to as the mean exit time of $M$, is a positive solution to the equation
$$ \Delta E + 1 = 0 .$$

As in the previous subsection we are going to restrict ourselves to the non-parabolic case. Since there is no duality when  $q = 1$, we will define the relevant capacity via positive measures.

\begin{definition}\label{def_c_g_infinity}
Given $K \subset U$ a compact domain of an open set $U\subseteq M$, we define
$$ C_{\infty}(K,U) = \sup \{ \nu(K) \colon \nu \in \mathcal{M}^+(K), \vert\vert G^U\nu\vert\vert_{L^1(U)} \leq 1\}.$$
When $U=M$, we simply write $C_{\infty}(K)$ in place of $C_{\infty}(K,M)$. A manifold $M$ is then said to be $L^\infty$-parabolic if $C_{\infty}(K) = 0$ for every compact subset $K\subset M$.
\end{definition}
The set function $C_{\infty}$ is monotone increasing in $K$, and monotone decreasing in $U$. 
In the next theorem we provide a description of the extremal measures for the $C_{\infty}$-capacity which in turn will easily yield the required capacitary characterization of the $L^1$-Liouville property and its equivalence with the $L^\infty$-parabolicity defined above.

\begin{theorem}\label{L1_liouville_cap}
Let $K$ be a compact domain of an open set $U \subseteq M$. There exists an extremal measure $\nu^K \in \mathcal{M}^+(K)$ such that
\begin{equation*}
C_{\infty}(K,U) = \nu^K(K).
\end{equation*}
Moreover, $\nu^K = \left(\min_{K} E^U\right)^{-1}\delta_{x_0}$ where $\delta_{x_0}$ is the Dirac measure centered at $x_0 \in \partial K$, and $E^U(x_0) = \min_{K} E^U$. 
In particular, a manifold $M$ is $L^1$-Liouville if and only if $C_{\infty}(K) = 0$ for some/every compact set $K$.
\end{theorem}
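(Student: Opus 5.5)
The key identity is Fubini plus the symmetry of $g^U$: for every $\nu\in\mathcal M^+(K)$,
\[
\|G^U\nu\|_{L^1(U)}=\int_U\int_K g^U(x,y)\,d\nu(x)\,d\mu(y)=\int_K E^U(x)\,d\nu(x),
\]
where $E^U(x)=\int_U g^U(x,y)\,d\mu(y)$ is the mean exit time of $U$, possibly identically $+\infty$. So the constraint in Definition~\ref{def_c_g_infinity} is linear in $\nu$, and $C_\infty(K,U)$ is the supremum of $\nu(K)$ subject to $\int_K E^U\,d\nu\le 1$. This is a linear program over positive measures.

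First step: determine the solution of this program. Assume first that $E^U$ is finite. Then it is a positive solution of $\Delta E^U=-1$ in $U$, hence continuous, and it attains a positive minimum $m=\min_K E^U$ on the compact set $K$. For any admissible $\nu$,
\[
m\,\nu(K)\le\int_K E^U\,d\nu\le 1,
\]
so $\nu(K)\le m^{-1}$. Conversely, $\nu=m^{-1}\delta_{x_0}$ with $E^U(x_0)=m$ is admissible and has total mass $m^{-1}$. Hence $C_\infty(K,U)=(\min_K E^U)^{-1}$, and this measure is extremal.

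Second step: show that $x_0$ can be taken on $\partial K$. Since $\Delta E^U=-1<0$, the function $E^U$ is strictly superharmonic. By the strong minimum principle its minimum over the domain $K$ cannot be attained at an interior point unless $E^U$ is constant near that point, which is impossible. So every minimum point lies on $\partial K$. The regularity of $E^U$ needed here is local: I expect this step, and the finiteness and continuity of $E^U$ when it is not identically infinite, to be the main technical point. It can be handled as in Remark~\ref{rmk_general_x_G}, since Lemma~\ref{green_ivariance} gives $E^U(x)\le C\,E^U(x')+C'$ locally, so $E^U$ is either finite everywhere or infinite everywhere.

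Third step: treat the case $E^U\equiv\infty$. Then every nonzero $\nu$ has $\|G^U\nu\|_{L^1}=\infty$, so $C_\infty(K,U)=0$. This agrees with the formula under the convention $(\min_K E^U)^{-1}=0$, with $\nu^K=0$.

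Finally, take $U=M$. By Proposition~\ref{prop_equiv_Lq} with $q=1$, together with local integrability of $g$, $M$ is $L^1$-Liouville if and only if $g(x,\cdot)\notin L^1(M)$, that is, $E\equiv\infty$, again using the some/every dichotomy from Theorem~\ref{equiv_G2}. By the formula just proved, this holds if and only if $C_\infty(K)=0$ for some, equivalently every, compact $K$, because $C_\infty(K)=(\min_K E)^{-1}$ is positive exactly when $E$ is finite.
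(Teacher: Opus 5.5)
Your proposal is correct and follows the paper's proof essentially step by step: the Tonelli identity $\|G^U\nu\|_{L^1(U)}=\int_K E^U\,d\nu$ gives $\nu(K)\le(\min_K E^U)^{-1}$, the normalized Dirac mass at a minimum point $x_0\in\partial K$ of the superharmonic function $E^U$ is extremal, $C_\infty(K,U)=0$ when $E^U\equiv\infty$, and the $L^1$-Liouville property is identified with $E\equiv\infty$. You are slightly more explicit than the paper on two points, namely why the minimum lies on $\partial K$ (since $\Delta E^U=-1$) and why $E^U$ is either finite everywhere or infinite everywhere, and these are welcome additions rather than departures.
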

\begin{proof}
We first consider the case where, for some/every point $x \in U$, we have
$$E^U(x) = \int_U g^U(x,y)d\mu(y) < \infty.$$
In this case, $E^U$ is a smooth function, and for any test measure $\nu \in \mathcal{M}^+(K)$, we have
$$  1\geq \int_U G^U\nu(y)d\mu(y) \geq \nu(K)\min_{K} E^U.$$
Hence,
$$C_{\infty}(K,U) \leq \frac{1}{\min_K E^U}\cdot$$
On the other hand, if $E^U(x_0) = \min_{K} E^U$ for some $x_0 \in \partial K$ (here we use that $E^U$ is superharmonic), then the Dirac measure $\nu_0 = (E^U(x_0))^{-1} \delta_{x_0} \in \mathcal{M}^+(K)$ satisfies
$$ \int_U G^U\nu_0(y)d\mu(y) = \frac{1}{E^U(x_0)}\int_U g^U(x_0,y)d\mu(y) = 1.$$
Moreover,
$$\nu_0(K) = \frac{1}{E^U(x_0)} \geq C_{\infty}(K,U),$$
that is, $\nu_0$ is the extremal measure for the capacity. Now, assume that
\[
E^U(x) = \int_U g^U(x,y)d\mu(y)=\infty
\]
for some/every $x \in U$, and let $\nu\in \mathcal{M}^+(K)$ be such that $\nu(K)>0$. Then
\[
\begin{split}
\int_U G^U\nu(y)d\mu(y) &= \int_U  d\mu(y)\int_U g^U(x,y)d\nu(x)\\[0.2cm]
&=\int_K d\nu(x)\int_U g^U(x,y)d\mu(y) = \infty,
\end{split}
\]
showing that
\[
C_{\infty}(K,U)=0.
\]
The second statement is now a consequence of the already noted fact that $M$ is $L^1$-Liouville if and only if $E \equiv \infty$.
\end{proof}

As in Proposition \ref{prop_limit_regularity} the monotonicity of $C_{\infty}(K,U)$ with respect to $U$ gives rise to the following identity
$$ C_{\infty}(K,U) = \lim_{n\rightarrow \infty}  C_{\infty}(K,U_n),$$
where $\{U_n\}$ is a pre-compact exhaustion of any open set $U\subset M$, such that $K \subset U_1$. It is easy to see that such a limit does not depend on the exhaustion and satisfies
$$ C_{\infty}(K,U) \leq \lim_{n\rightarrow \infty}  C_{\infty}(K,U_n).$$
For the other inequality we consider $g_{n}(x,y)$ the Dirichlet Green function associated to $U_n$, and extend $g_n(x,y)$ by zero on $U\backslash U_n$. For every $y \in U$, and $x \in U$ fixed, we know that $g_n(x,y) \nearrow g^U(x,y)$. It follows that
$$E_n(x) \doteq \int_U g_n(x,y)d\mu(y) \nearrow \int_U g^U(x,y)d\mu(y) \doteq E^U(x)$$
for any $x\in U$. As before, if $E^U(x) < \infty$ then $E^U$ is a smooth function, and by Dini's theorem we have $E_n \nearrow E^U$ uniformly on compact sets, in particular on $K$. Now, by uniform convergence $\min_{K}E_n \rightarrow \min_K E^U$ and the conclusion follows from Theorem \ref{L1_liouville_cap}. If $E^U(x) = \infty$, since $E_n \rightarrow E^U$ locally uniformly it follows that  $\min_{K}E_n \rightarrow \infty$.

We may extend the definition of the set function $C_{\infty} (\, \cdot \, ,U)$ to arbitrary sets via the standard min-max procedure. First, if $G\subset U$ is an open subset we let
$$C_{\infty}(G,U) \doteq \sup \{C_{\infty}(K,U) \colon K\subset G \ \ \text{is compact}\},$$
and, for every $F \subset U$, we define
$$C_{\infty}(F,U) \doteq \inf \{C_{\infty}(G,U) \colon G\supset F \ \ \text{is open}\}.$$

For any open subset $F \subset U$ we can show that
\begin{equation}\label{inf_eq_G_U}
C_{\infty}(F,U) = \left(\inf_{F} E^U\right)^{-1}.
\end{equation}
Indeed, if $G\subset U$ is open, it follows easily from the properties of $\inf\!/\!\sup$ that, for every $G\subset U$ open,
$$ \inf_G E^U = \inf_{K\subset G} \inf_K E^U,$$
where $K\subset G$ is compact, whence
$$ C_{\infty}(G,U) = \sup_{K\subset G} C_{\infty}(K,U) = \left(\inf_{K \subset G} \inf_K E^U\right)^{-1} = \left(\inf_G E^U\right)^{-1}.$$
Similarly, using the fact that, for every $F\subset U$,
$$ \inf_F E^U = \sup_{G\supset F} \inf_G E^U,$$
where $G\subset U$ is open, we deduce that
$$C_{\infty}(F,U) = \inf_{G\supset F} C_{\infty}(G,U) = \left(\inf_F E^U \right)^{-1}.$$

We now collect the properties of the set function $C_{\infty}$ in the following proposition.

\begin{proposition}\label{prop_prop_C_g_infty}
$C_{\infty}$ enjoys the following properties.
\begin{itemize}
\item[1)] Given $F_1 \subset F_2 \subset U$ subsets of an open set $U\subset M$, we have
$$C_{\infty}(F_1,U) \leq C_{\infty}(F_2,U).$$
\item[2)] Given $U_1 \subset U_2 \subset M$ open sets such that $F\subset U_1$, it holds
$$C_{\infty}(F,U_1) \geq C_{\infty}(F,U_2).$$
\item[3)] Given $F_1, F_2$ arbitrary subsets of an open set $U\subset M$, there holds
$$ C_{\infty}(F_1 \cup F_2,U) \leq C_{\infty}(F_1,U) + C_{\infty}(F_2,U) - C_{\infty}(F_1 \cap F_2,U).$$
\item[4)] Let $K_i$ be a decreasing sequence of compact sets contained in an open set $U\subset M$, then
$$ C_{\infty}(\displaystyle \cap_{i=1}^{\infty} K_i ,U) = \lim_{i \rightarrow \infty}  C_{\infty}(K_i ,U).$$
\item[5)] Given an increasing sequence of arbitrary sets $F_i$ contained in an open set $U\subset M$ there holds
$$ C_{\infty}(\cup_i F_i,U) = \lim_{i \rightarrow \infty} C_{\infty}(F_i,U).$$
\end{itemize}
\end{proposition}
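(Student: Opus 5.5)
The plan is to reduce everything to the explicit formula \eqref{inf_eq_G_U}, extended to arbitrary sets, namely
\[
C_{\infty}(F,U)=\Big(\inf_F E^U\Big)^{-1}\quad\text{for every } F\subset U,
\]
with the conventions $1/\infty=0$ and $\inf_\emptyset E^U=\infty$. If $E^U\equiv\infty$, then every capacity vanishes by Theorem \ref{L1_liouville_cap} together with the min-max extension, and all five properties are trivial. So we assume $E^U<\infty$. Then $E^U$ is a smooth positive function, and by the computation preceding the proposition the formula holds for compact, open and arbitrary sets alike. Writing $\Phi(t)=1/t$, a decreasing map $(0,\infty]\to[0,\infty)$, each property becomes an elementary statement about infima of a fixed continuous function.

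\textbf{Properties 1) and 2).} For 1), $F_1\subset F_2$ gives $\inf_{F_1}E^U\ge\inf_{F_2}E^U$, and we apply $\Phi$. For 2), if $U_1\subset U_2$, then $g^{U_1}\le g^{U_2}$, so $E^{U_1}\le E^{U_2}$ pointwise on $U_1$. Hence $\inf_F E^{U_1}\le\inf_F E^{U_2}$. If $E^{U_2}\equiv\infty$ the inequality is trivial.

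\textbf{Property 3).} Set $a=\inf_{F_1}E^U$, $b=\inf_{F_2}E^U$ and $c=\inf_{F_1\cap F_2}E^U$. Then $\inf_{F_1\cup F_2}E^U=\min(a,b)$ and $c\ge\max(a,b)$. The claim becomes
\[
\max(\Phi(a),\Phi(b))\le \Phi(a)+\Phi(b)-\Phi(c).
\]
This holds because $\Phi(c)\le\min(\Phi(a),\Phi(b))$ and $\max+\min=$ sum.

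\textbf{Property 4).} This is the step needing the most care, since it uses continuity and compactness. Let $K=\cap_i K_i$. The sequence $m_i=\min_{K_i}E^U$ is nondecreasing and bounded by $\min_K E^U$; the case $K=\emptyset$ will be handled separately, and there the $K_i$ are eventually empty by compactness. Pick $x_i\in K_i$ with $E^U(x_i)=m_i$. All $x_i$ lie in the compact set $K_1$, so a subsequence converges to some $x$. Since the $K_i$ are decreasing and closed, $x\in K$. Continuity of $E^U$ gives $\lim m_i=E^U(x)\ge\min_K E^U$, hence $\lim m_i=\min_K E^U$, and we apply the continuous map $\Phi$.

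\textbf{Property 5).} For increasing $F_i$ with union $F$, we have $\inf_F E^U=\inf_i\inf_{F_i}E^U=\lim_i\inf_{F_i}E^U$, since the sequence of infima is nonincreasing. We then apply $\Phi$, which is continuous on $(0,\infty]$. No compactness or continuity of $E^U$ is needed here.

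The main obstacle is simply to make sure the formula $C_\infty(F,U)=(\inf_F E^U)^{-1}$ is justified for all sets in both regimes, including the degenerate cases. Once that is settled, the proposition is pure bookkeeping with infima.
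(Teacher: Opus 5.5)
Your proposal is correct and follows essentially the same route as the paper: everything is reduced to the formula $C_\infty(F,U)=(\inf_F E^U)^{-1}$ from \eqref{inf_eq_G_U}. After that, 1)--3) are elementary manipulations of infima, 4) uses the same minimizing-point and compactness argument, and 5) is the monotone limit of infima. One small caveat, which the paper also passes over: for $F$ not relatively compact in $U$ one can have $\inf_F E^U=0$ (e.g.\ $F=U$ bounded, where $E^U$ vanishes on $\partial U$). So $\Phi$ must be allowed to take the value $\infty$, and 3) is best read as $C_\infty(F_1\cup F_2,U)+C_\infty(F_1\cap F_2,U)\le C_\infty(F_1,U)+C_\infty(F_2,U)$, which your $\max+\min$ argument proves verbatim in $[0,\infty]$.
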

\begin{proof}
Items $1)$ and $2)$ are an easy consequence of \eqref{inf_eq_G_U}. For item $3)$, let $F_1, F_2 \subset U$ be arbitrary sets. Equation \eqref{inf_eq_G_U} yields
\begin{eqnarray*}
C_{\infty}(F_1 \cup F_2,U) &=& \left(\inf_{F_1 \cup F_2} E^U\right)^{-1} \\
&=& \left[\min\{ \inf_{F_1} E^U, \inf_{F_2} E^U\}\right]^{-1} \\
&\leq & \left(\inf_{F_1} E^U\right)^{-1} + \left(\inf_{F_2} E^U\right)^{-1} - \left(\inf_{F_1 \cap F_2} E^U\right)^{-1} \\[0.1cm]
&=& C_{\infty}(F_1,U) + C_{\infty}(F_2,U) - C_{\infty}(F_1 \cap F_2,U).
\end{eqnarray*}
Item $4)$ will follow from the following identity
$$ \min_{\cap_n K_i} E^U = \lim_{i \rightarrow \infty} \min_{K_i} E^U.$$
Indeed, by monotonicity it is clear that the limit exists, and the left hand side is no less than the right hand side. On the other hand, for every $i$ let $x_i \in K_i$ be the point minimizing $E^U$ on $K_i$. By passing to a subsequence we may assume that $x_i \rightarrow x \in \cap_{i}K_i$. Hence,
$$\min_{\cap_{i}K_i} E^U \leq E^U(x) = \lim_{i \rightarrow \infty} E^U(x_i) = \lim_{i \rightarrow \infty} \min_{K_i} E^U.$$
To prove item $5)$ let us set $F = \cup_i F_i$. The claim follows from the fact that $\inf_F E^U=\lim_i \inf_{F_i}E^U$ which in turn is easily seen as follows. On the one hand, monotonicity shows that the limit exists and the left hand side is less than or equal to the right hand side. On the other hand, for every $\epsilon>0$, there exists $x_\epsilon\in F$ such that $E^U(x_\epsilon)< \inf _F E^U + \epsilon$. Since $x_\epsilon\in F_i$ for large enough $i$, for such $i$'s
$
\inf_{F_i} E^U\leq E^U(x_\epsilon)\leq \inf_F E^U +\epsilon,
$
whence letting $i\to \infty$ and $\epsilon \to 0$ we conclude that
$\lim_i \inf_{F_i} E^U\leq \inf_F E^U$ and our claim follows.
\end{proof}

The properties listed in Proposition \ref{prop_prop_C_g_infty} show that $C_{\infty}(\cdot,U)$ is a Choquet capacity, and thus, all Borel sets $F\subset U$ are capacitable (cf. \cite[Thm.2.3.11]{adams_hedberg}), that is,
\begin{eqnarray*}
C_{\infty}(F,U) &=& \inf\{C_{\infty}(G,U) \colon G\supset F, G  \ \text{open}\}\\[0.2cm]
&=& \sup\{C_{\infty}(K,U) \colon K\subset F, K  \ \text{compact}\}.
\end{eqnarray*}

\section{Volume conditions for the $L^{p}$-parabolicity} \label{sec_4}

In this section we first provide a sufficient pointwise volume condition for the validity of the $L^{p}$-parabolicity of a complete manifold $M$ in the range $1<p\leq 2$, which extends the results obtained in \cite{faraji_grigoryan} for $p=2$, and is compatible with the known volume conditions for the usual parabolicity in the limit $p \to 1$. For manifolds with non-negative Ricci curvature we obtain a sufficient integral condition in the whole range $1<p<\infty$ that implies an improved volume condition. Moreover, this integral condition turns out to be always valid, and essentially sharp, for general model manifolds.



In what follows we are going to denote by $V(o,r)$, or simply $V(r)$, the volume of the ball $B_r(o)$ and by $S(o,r)$, or simply $S(r)$, the area of the sphere $\partial B_r(o)$, where $o \in M$ is a given reference point.

\begin{theorem}\label{vol_cond_general}
Let $M$ be a complete manifold and let $1<p\leq 2$. Assume that, for some $o \in M$ and sufficiently large $r$, it holds
\begin{equation}\label{vol_cond_almost}
V(r) \leq C \frac{r^{2p}}{\log r},
\end{equation}
for some constant $C>0$. Then $M$ is $L^{p}$-parabolic.
\end{theorem}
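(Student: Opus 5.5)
By Theorem \ref{characterization_biparb_capacity} and Theorem \ref{equiv_G2}, it suffices to show that $M$ is $L^q$-Liouville, where $q=p/(p-1)\ge 2$. Assume $M$ is non-parabolic; otherwise there is nothing to prove. Concretely, I will show that $g(o,\cdot)\notin L^q(M\setminus B_1(o))$.

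I would argue by contradiction and use the mean-value type behaviour of the Green function on large spheres. The key quantity is $m(r)=\inf_{\partial B_r} g(o,\cdot)$, or rather a spherical average of it. On the one hand, the flux of $g(o,\cdot)$ through any sphere equals $1$. On the other hand, the flux of $g$ through the level set $\{g=t\}$ is $1$ for every $t$. Using the coarea formula on level sets, as in the classical proof that $\int^\infty r\,dr/V(r)=\infty$ implies parabolicity, and as in Faraji--Grigor'yan for $p=2$, one gets a lower bound on $\int_{\{s<g<t\}} g^q\,d\mu$ in terms of the volume of the sublevel regions.

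The concrete route is as follows. Set $\Omega_t=\{g(o,\cdot)>t\}$ and $v(t)=\mu(\Omega_t)$. By coarea and Cauchy--Schwarz,
\[
\left(-v'(t)\right)\ \ge\ \left(\int_{\{g=t\}}|\nabla g|^{-1}\right)\ \ge\ \frac{\mathrm{area}(\{g=t\})^2}{\int_{\{g=t\}}|\nabla g|}=\mathrm{area}(\{g=t\})^2 .
\]
Then
\[
\int_{M\setminus B_1} g^q\,d\mu \approx \int_0^{t_0} t^q\,(-dv(t)) = q\int_0^{t_0} t^{q-1}v(t)\,dt+\text{bdry}.
\]
So the task reduces to showing that $v(t)\ge$ something large as $t\to0$. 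I will relate $\Omega_t$ to balls: $\Omega_t\supset B_{R(t)}$, where $R(t)$ is the radius at which $\sup_{\partial B_R} g<t$. This needs an upper bound for $g$ on large spheres, which is unavailable in general. A better idea is to reverse the roles: use the capacity/energy bound $\int_{\Omega_s\setminus\Omega_t}|\nabla g|^2=t-s$ (the flux equals $1$), combined with the volume hypothesis, to bound from below how slowly $g$ can decay. Concretely, I will use the test function $\varphi(x)=\psi(d(o,x))$ in the $L^p$-capacity formulation directly, which is cleaner. By Theorem \ref{Cap2,2_C_g2}, $C_p(\overline{B_1})=\mathrm{Cap}_{2,p}(\overline{B_1},M)=\inf\int|\Delta u|^p$ over $u\in C_c^\infty$ with $u\ge1$ on $B_1$. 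It therefore suffices to construct $u_R$ with $u_R=1$ on $B_1$ and $\int|\Delta u_R|^p\to0$.

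The difficulty is that $\Delta$ of a distance function is uncontrolled without curvature bounds. Following Faraji--Grigor'yan, I would replace $d(o,\cdot)$ by a harmonic-type exhaustion. Take $u=\Phi(G_R\mathbf 1_{B_1})$-like functions, or use solutions $w_R$ of $\Delta w_R=-f_R$ on $B_R$ with Dirichlet data, chosen so that $|\Delta u|$ is controlled in $L^p$ by volume. In practice, I expect the main step to be an estimate of the form
\[
C_p(\overline{B_1})\le C\left(\int_1^\infty \Big(\frac{r^{2p-1}}{V(r)}\Big)^{\frac1{p-1}}\frac{dr}{r}\right)^{-(p-1)}
\]
or an analogous integral test. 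The hypothesis $V(r)\le Cr^{2p}/\log r$ gives the integrand $\gtrsim (\log r)^{1/(p-1)}/r$, whose integral diverges, hence $C_p=0$. The restriction $p\le2$ should enter when I interpolate the $L^p$ norm of $\Delta u$ between an $L^1$ bound (flux/capacity) and an $L^2$ bound (energy via Faraji--Grigor'yan's $p=2$ estimate), via Hölder. This interpolation between the known cases $p=1$ and $p=2$ is the step I expect to be the main obstacle: I would first try Riesz--Thorin/Hölder applied to the extremal test functions from the $p=2$ construction of Faraji--Grigor'yan, checking that the log loss is exactly compensated by the hypothesis.
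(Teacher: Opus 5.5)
Your reduction is fine: by Theorems \ref{equiv_G2} and \ref{characterization_biparb_capacity} it suffices to show $g(o,\cdot)\notin L^q(M\setminus B_1(o))$, or equivalently $G^q\varphi(o)=\infty$ for a single $0\le\varphi\in C^\infty_c(M)$. Beyond that, however, there is no proof. You drop the coarea route because it needs upper bounds on $g$ over large spheres, and you drop the $\mathrm{Cap}_{2,p}$ route because $\Delta d(o,\cdot)$ is uncontrolled. The decisive capacity estimate is only ``expected''.

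As written, that estimate does not even give the conclusion. For $V(r)\asymp r^{2p}/\log r$ the integrand $(r^{2p-1}/V(r))^{1/(p-1)}r^{-1}$ is $\asymp(\log r)^{1/(p-1)}r^{-q}$, which is integrable. If you replace $r^{2p-1}$ by $r^{2p}$ the arithmetic works. But then your estimate would give $L^p$-parabolicity under $V(r)\le Cr^{2p}(\log r)^{p-1}$ on an arbitrary complete manifold. That is a special case of Conjecture \ref{conjecture}, which the paper proves only for $\mathrm{Ric}\ge0$, for models, or under the diagonal lower bound \eqref{upper_hk}. So it cannot serve as a routine main step, and the $\log r$ loss in \eqref{vol_cond_almost} already signals that the actual proof goes through a lossy estimate.

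The interpolation plan also has nothing to act on. The $p=2$ argument of Faraji--Grigor'yan is a heat-kernel argument. It does not construct test functions with simultaneous control of $\|\Delta u\|_{L^1}$ and $\|\Delta u\|_{L^2}$. Building such functions without curvature assumptions is exactly the obstacle you identified and left open.

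The missing idea is to estimate $G^q\varphi(o)$ through the heat kernel, using $g=\int_0^\infty p_t\,dt$. Since $p\le 2$ means $q-1\ge 1$, the map $s\mapsto s^{q-1}$ is convex and vanishes at $0$. Jensen's inequality with respect to the sub-probability measure $p_t(o,y)\,d\mu(y)$, followed by the semigroup property, then gives
\[
G^q\varphi(o)\ \ge\ \int_0^\infty\Big(\int_t^\infty P_\tau\varphi(o)\,d\tau\Big)^{q-1}dt .
\]
This step, not an interpolation, is where $p\le 2$ enters.

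Next, polynomial volume growth alone, with no curvature assumption, gives the Coulhon--Grigor'yan bound $p_t(o,o)\ge\frac14 V(\sqrt{Kt\log t})^{-1}$ for large $t$. Combined with the local parabolic Harnack inequality and \eqref{vol_cond_almost}, this yields $P_\tau\varphi(o)\ge c\,\tau^{-p}(\log\tau)^{1-p}$. The extra $\sqrt{\log t}$ in the radius is exactly where the logarithm is lost. Hence $\int_t^\infty P_\tau\varphi(o)\,d\tau\gtrsim (t\log t)^{1-p}$, and its power $q-1=1/(p-1)$ is $\gtrsim (t\log t)^{-1}$, which is not integrable at infinity. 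So $G^q\varphi(o)=\infty$, and the two theorems above give $L^p$-parabolicity.
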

\begin{proof}
From Theorems \ref{equiv_G2} and \ref{characterization_biparb_capacity} it is sufficient to prove that $G^{q}\varphi \equiv \infty$ for some $0\leq \varphi \in C^{\infty}_c(M)$, and $pq = p + q$. Recall that the Green kernel is given by
$$g(x,y) = \int_0^\infty p_t(x,y)dt,$$
where $p_t(x,y)$ is the heat kernel of $M$. By Jensen's inequality and the semi-group property of the heat kernel we obtain
\begin{eqnarray*}
G^{q}\varphi(o) 
&=& \int_0^\infty dt\int_M \left(\int_M g(y,z)\varphi(z)d\mu(z)\right)^{q-1} p_t(o,y)d\mu(y) \\[0.2cm]
&\geq & \int_0^\infty \left(\int_M p_t(o,y)\int_M g(y,z)\varphi(z)d\mu(z) d\mu(y)\right)^{q-1} dt \\[0.2cm]
&=& \int_0^\infty \left( \int_0^\infty P_{t+s}\varphi(o) ds\right)^{q-1} dt \\[0.2cm]
&=& \int_0^\infty \left(\int_t^\infty P_\tau \varphi(o) d\tau\right)^{q-1} dt,
\end{eqnarray*}
where $P_t$ is the heat operator acting on $C^\infty_c(M)$ by
$$P_t\varphi(x) = \int_M p_t(x,y)\varphi(y)d\mu(y).$$

To estimate $P_t\varphi(o)$ from below we argue as in \cite{faraji_grigoryan}: let $\text{supp} \,\varphi\subset B_R(o)$.
It is proved in \cite{coulhon_grigoryan} (see also \cite[Thm.16.5]{grigoryan_book}) that a polynomial volume estimate of the form $V(r)\leq Cr^{\nu}$, for $r\geq r_0$ implies that there exist constants $t_0=t_0(r_0)$ and $K=K(o,r_0,C, \nu)$ such that the following  heat kernel diagonal lower bound
\[
p_t(o,o)\ge \frac{1}{4}  [V(\sqrt{Kt\log t})]^{-1}
\]
holds for every $t\geq t_0=t_0(r_0)$. Together with the
local parabolic H\"arnack inequality in \cite{saloff-coste,saloff-coste1} we obtain
\begin{equation}\label{final_estimate_vol}
\begin{split}
 P_\tau \varphi(o) &=\int_{B_R(o)} p_\tau (o,y)\varphi(y)
 \geq  c\int_{B_R(o)} p_\tau(o,o)\varphi(y)  \\&\geq
 c ||\varphi||_{L^1}  \tau^{-\frac{q}{q-1}} (\log \tau)^{-\frac{1}{q-1}} \quad \text{for} \ \ \tau \geq \tau_0,
 \end{split}
\end{equation}
where $c>0$ depends on $r_0, C,\nu,  R$ and on the geometry of $B_R(o) \subset M$. Inserting this into the above inequality we conclude that
\begin{eqnarray*}
G^{q}\varphi(o) &\geq & c  \int_{\tau_0}^\infty \left(\int_t^\infty \tau^{-\frac{q}{q-1}} (\log \tau)^{-\frac{1}{q-1}} d\tau\right)^{q-1} dt \\[0.2cm]
& \asymp & \int_{\tau_0}^\infty \frac{dt}{t \log t} = \infty.
\end{eqnarray*}
\end{proof}

\begin{remark}\emph{
A sharp estimate in the range $1<p \leq 2$ can be obtained if we assume that $M$ satisfies the following heat kernel diagonal lower bound
\begin{equation}\label{upper_hk}
p_t(o,o) \geq \frac{c}{V(\sqrt{t})},
\end{equation}
for some $c>0$ and for all $t>0$. In this case, we can replace \eqref{vol_cond_almost} by the sharp inequality
\begin{equation*}
V(r) \leq C r^{2p}(\log r)^{p-1},
\end{equation*}
to obtain \eqref{final_estimate_vol}. Thus, the result follows as in the proof of Theorem \ref{vol_cond_general}. For instance, \eqref{upper_hk} holds if $M$ satisfies the volume doubling condition and a comparable diagonal heat kernel upper estimate or, equivalently, the relative Faber-Krahn inequality (see \cite[Thm.15.21 and Cor.16.7]{grigoryan_book}). 
}
\end{remark}

Let us now consider the class of geodesically complete Riemannian manifolds with non-negative Ricci curvature. In this class, P. Li and S.-T. Yau \cite{li-yau} established the following Green function estimate
\begin{equation}\label{li-yau}
C^{-1} \int_r^\infty \frac{t\,dt}{V(x,t)} \leq g(x,y) \leq C  \int_r^\infty \frac{t\,dt}{V(x,t)},
\end{equation}
where $r = d(x,y)$ and $C>0$. 
A characterization of $L^p$-parabolicity in the whole range $1<p<\infty$ can be deduced from Theorems~\ref{equiv_G2} and \ref{characterization_biparb_capacity}.

To state the next results, fix some $x\in M$ and set $V(r)=V(x,r)$.

\begin{proposition}\label{prop_ricc} Let $M$ be a complete Riemannian manifold with $\text{Ric}\geq 0$, and let $1<p<\infty$. Then $M$ is $L^p$-parabolic if and only if
\begin{equation}\label{int_vol_1}
\int^\infty  \left(\int_r^\infty \frac{t\,dt}{V(t)}\right)^{\frac{p}{p-1}}V'(r)dr = \infty.
\end{equation}
\end{proposition}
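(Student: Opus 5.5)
We reduce to the integrability criterion of Theorem \ref{equiv_G2} and then use the Li--Yau estimate \eqref{li-yau}. By Theorem \ref{characterization_biparb_capacity}, $M$ is $L^p$-parabolic if and only if it is $L^q$-Liouville, where $q=p/(p-1)$ is the conjugate exponent. By Theorem \ref{equiv_G2} (iii)$\Leftrightarrow$(v), this happens if and only if $g(x,\cdot)\notin L^q(M\setminus B_1(x))$ for the fixed point $x$.

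We may assume $M$ is non-parabolic; otherwise both sides hold trivially. Indeed, if $M$ is parabolic then $\int^\infty t\,dt/V(t)=\infty$, so the inner integral in \eqref{int_vol_1} is infinite. Hence $\int^\infty t\,dt/V(t)<\infty$, and the function
\[
\Phi(r)=\int_r^\infty \frac{t\,dt}{V(t)}
\]
is finite, positive and non-increasing for $r>0$. By \eqref{li-yau}, $C^{-1}\Phi(d(x,y))\le g(x,y)\le C\Phi(d(x,y))$. Raising to the power $q$ gives
\[
C^{-q}\int_{M\setminus B_1(x)}\Phi(d(x,y))^q\,d\mu(y)\le \int_{M\setminus B_1(x)} g(x,y)^q\,d\mu(y)\le C^{q}\int_{M\setminus B_1(x)}\Phi(d(x,y))^q\,d\mu(y).
\]
So the non-integrability of $g(x,\cdot)^q$ is equivalent to the divergence of $\int_{M\setminus B_1(x)}\Phi(d(x,y))^q\,d\mu(y)$.

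Next we apply the coarea formula to the distance function $r(y)=d(x,y)$. It is Lipschitz with $|\nabla r|=1$ a.e., and the cut locus has measure zero. This gives
\[
\int_{M\setminus B_1(x)}\Phi(d(x,y))^q\,d\mu(y)=\int_1^\infty \Phi(r)^q S(r)\,dr=\int_1^\infty \Phi(r)^q\,dV(r).
\]
Here $V$ is locally absolutely continuous with $V'(r)=S(r)$ for a.e. $r$. Since $q=p/(p-1)$, this is exactly the integral in \eqref{int_vol_1}, with the lower limit irrelevant because $\Phi$ is bounded and $V$ is finite on compact intervals. This proves the equivalence.

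The only delicate point is justifying the coarea step. One can write it as a Stieltjes integral against $dV$, using the layer-cake formula $\int h(r(y))\,d\mu=\int h\,dV$ for the monotone function $h=\Phi^q\mathbf 1_{[1,\infty)}$. This avoids any regularity issue with $S(r)$ and shows that $V'(r)\,dr$ in \eqref{int_vol_1} should be read as $dV(r)$, which coincides with it since $V$ is absolutely continuous.
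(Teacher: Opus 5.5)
Your proposal is correct and follows essentially the same route as the paper: pass from $L^p$-parabolicity to the $L^q$-Liouville property via Theorem~\ref{characterization_biparb_capacity}, then to the non-integrability of $g(x,\cdot)^q$ off a ball via Theorem~\ref{equiv_G2}, and integrate the Li--Yau bound \eqref{li-yau} over distance spheres. You also make explicit two points the paper leaves implicit: the parabolic case, where for $\mathrm{Ric}\ge 0$ parabolicity forces $\int^\infty t\,dt/V(t)=\infty$, and the coarea identity $\int\Phi(d(x,\cdot))^q\,d\mu=\int\Phi^q\,dV$.
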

\begin{proof}
By Theorem~\ref{equiv_G2}, $M$ is $L^{q}$-Liouville with $q=\frac{p}{p-1}$ if and only if for some/all $\varepsilon >0
$
\begin{equation*}
\int_{M\backslash B_{\varepsilon}(x)}g(x,y)^{q}d\mu(y) =\infty .
\end{equation*}%
Integrating the Li-Yau Green function estimate \eqref{li-yau} we obtain%
\[
\int_{M\backslash B_\varepsilon(x)} g(x,y)^{q}d\mu(y) \asymp  \int_\varepsilon^\infty  \left(\int_r^\infty \frac{t\,dt}{V(t)}\right)^{q}V'(r)dr.
\]
Finally, by Theorem~\ref{characterization_biparb_capacity}, $M$ is $L^{p}$-parabolic if and only if $M$ is $L^{q}$-Liouville.
\end{proof}

As a first consequence, we can obtain a sharp sufficient integral volume condition to the validity of the $L^p$-parabolicity.

\begin{corollary}\label{Vp_ricc}
Let $M$ be a complete Riemannian manifold with $\text{Ric}\geq 0$, and let $1<p<\infty$. If
\begin{equation}
\int^{\infty }r\left( \int_{r}^{\infty }\frac{tdt}{V(t)}\right) ^{\frac{1}{%
p-1}}dr=\infty,  \label{Vp}
\end{equation}%
where \(V(t)=V(x,t) \) for some \(x\in M,\) then $M$ is $L^p$-parabolic.
\end{corollary}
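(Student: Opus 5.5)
We reduce to the criterion of Proposition \ref{prop_ricc}: it suffices to show that \eqref{Vp} implies \eqref{int_vol_1}. Set $q=\frac{p}{p-1}$, so that $q-1=\frac{1}{p-1}$, and write
\[
\Phi(r)=\int_r^\infty \frac{t\,dt}{V(t)}.
\]
We may assume $\Phi(r)<\infty$. Otherwise $g\equiv\infty$ by \eqref{li-yau}, $M$ is parabolic, and hence trivially $L^p$-parabolic. Note that $\Phi$ is decreasing, with $\Phi'(r)=-r/V(r)$.

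The key step is an integration by parts. For large $R_0<R$,
\[
\int_{R_0}^R \Phi^{q}V'\,dr=\Big[\Phi^qV\Big]_{R_0}^R - q\int_{R_0}^R \Phi^{q-1}\Phi' V\,dr
=\Big[\Phi^qV\Big]_{R_0}^R + q\int_{R_0}^R r\,\Phi(r)^{q-1}\,dr .
\]
The boundary term at $R$ is nonnegative, so
\[
\int_{R_0}^R \Phi^q V'\,dr \ \ge\ q\int_{R_0}^R r\,\Phi(r)^{\frac{1}{p-1}}dr - \Phi(R_0)^qV(R_0).
\]
Here the last term is a fixed finite constant. Letting $R\to\infty$, assumption \eqref{Vp} makes the right-hand side infinite, so \eqref{int_vol_1} holds. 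Proposition \ref{prop_ricc} then gives $L^p$-parabolicity.

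There is one technical point. $V$ is only locally Lipschitz (it is absolutely continuous, with $V'(r)=S(r)$ a.e.), so the Stieltjes integration by parts has to be justified. This is fine because $\Phi^q$ is $C^1$ and $V$ is absolutely continuous and monotone; alternatively, one can read $V'(r)\,dr$ as $dV$. The main thing to check is the sign: the boundary term at $R$ is discarded rather than controlled, so no decay of $\Phi^qV$ is needed and the argument goes through directly.
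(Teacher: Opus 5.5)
Your proposal is correct and follows essentially the same route as the paper. You reduce to the non-parabolic case, integrate by parts using $\Phi' V=-r$, discard the nonnegative boundary term at the upper limit (the paper integrates from $1$ instead of $R_0$), and conclude via Proposition \ref{prop_ricc}.
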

\begin{proof}
Since every parabolic manifold is $L^p$-parabolic for every $1<p<\infty$, let us assume that $M$ is non-parabolic. By the Li-Yau estimate \eqref{li-yau} the inner integral in \eqref{Vp} is finite for any $r>0$. Integration by parts yields
\begin{align*}
\int^\infty_1 \left( \int_r^\infty \frac{tdt}{V(t)}\right)^\frac{p}{p-1}V'(r)dr
\geq&-\int^\infty_1 V(r)\frac{d}{dr}\left( \int_r^\infty \frac{tdt}{V(t)}\right)^\frac{p}{p-1}dr\\
&- V(1) \left(\int_1^\infty \frac{tdt}{V(t)}\right)^\frac{p}{p-1}\\
=&  \frac{p}{p-1}\int^\infty_1 r\left( \int_r^\infty \frac{tdt}{V(t)}\right)^\frac{1}{p-1}dr \\
 &- V(1) \left(\int_1^\infty \frac{tdt}{V(t)}\right)^\frac{p}{p-1},
\end{align*}
and the conclusion follows by Proposition~\ref{prop_ricc}.
\end{proof}

\begin{remark}\emph{
For example, if
\begin{equation*}
V(r)\leq Cr^{2p}\left( \log r\right) ^{p-1}
\end{equation*}%
then \eqref{Vp} is satisfied. In particular, if $M$ has dimension $n\geq 2$ and satisfies $\text{Ric}\geq 0$, by the Bishop-Gromov volume comparison theorem $V(r) \leq C r^n$, for some $C>0$, and therefore $M$ is $L^p$-parabolic for every $p\geq n/2$.}
\end{remark}


Motivated by this result we make the following

\begin{conjecture}\label{conjecture}
Let $M$ be a complete Riemannian manifold, and let $p \in (1,\infty)$. If
\[
\int^\infty r \left( \int_r^\infty \frac{tdt}{V(t)}\right)^{\frac{1}{p-1}}dr = \infty,
\]
then $M$ is $L^p$-parabolic.
\end{conjecture}

%

We conclude this section by observing that Conjecture \ref{conjecture} holds also for the class of model manifolds $(M_\sigma,ds^2)$, where $M_\sigma = \mathbb{R}^n$ and, in polar coordinates $(r,\theta)$,
$$ds^2= dr^2+ \sigma^2(r) d\theta^2$$
for a smooth, positive function $\sigma$ on $(0,\infty)$.
In this case, the Green kernel with pole at $o=0$ is radial and it is given by
 \[
 g(o, x)=\int_r^{\infty} \frac{dt}{S(t)}, \quad \text{if }  x=(r,\theta),
 \]
where $S(r)=c_m \sigma^{m-1}(r)$ is the Riemannian volume of the sphere $\partial B_r(o)$ centered at the pole $o$ with radius $r>0$. 

Similarly to Proposition~\ref{prop_ricc}, Theorems~\ref{equiv_G2} and \ref{characterization_biparb_capacity} give the following characterization for the $L^p$-parabolicity of model manifolds.

\begin{proposition}
\label{TSS}Let $M_\sigma$ be a model manifold as above and $p\in \left( 1,\infty
\right) $. Then $M_\sigma$ is $L^{p}$-parabolic if and only if
\begin{equation}
\int^{\infty }\left( \int_{r}^{\infty }\frac{dt}{S(t)}\right) ^{\frac{p}{p-1}%
}S(r)dr=\infty .  \label{SS}
\end{equation}
\end{proposition}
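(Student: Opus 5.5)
The plan follows the argument of Proposition \ref{prop_ricc}, with the Li--Yau estimate replaced by the explicit radial Green kernel of the model. First we dispose of the parabolic case. If $\int^\infty dt/S(t)=\infty$, then $M_\sigma$ is parabolic, hence $L^p$-parabolic. In the same situation the inner integral in \eqref{SS} is infinite, so \eqref{SS} holds trivially with the convention $\infty\cdot S=\infty$. We may therefore assume $\int^\infty dt/S(t)<\infty$. Then $M_\sigma$ is non-parabolic and its minimal Green kernel with pole at $o$ is
\[
g(o,x)=\int_{r}^\infty \frac{dt}{S(t)},\qquad x=(r,\theta).
\]

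Set $q=p/(p-1)$. By Theorem~\ref{characterization_biparb_capacity}, $M_\sigma$ is $L^p$-parabolic if and only if it is $L^q$-Liouville. By Theorem~\ref{equiv_G2}, items iii)--v), this is equivalent to $g(o,\cdot)\notin L^q(M_\sigma\setminus B_1(o))$. Only the pole $o$ matters here, because that theorem gives the ``some/every $x$, some/every $r$'' equivalence.

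Since $g(o,\cdot)$ is radial, integrating in polar coordinates (coarea with $|\nabla r|=1$) gives
\[
\int_{M_\sigma\setminus B_1(o)} g(o,x)^q\,d\mu(x)=\int_1^\infty\left(\int_r^\infty\frac{dt}{S(t)}\right)^{\frac{p}{p-1}}S(r)\,dr .
\]
Divergence of the right-hand side is exactly \eqref{SS}, whose lower limit is immaterial because the integrand is continuous and finite on compact subintervals of $(0,\infty)$. Combining these equivalences proves the proposition.

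The main point to check is the explicit formula for the minimal Green kernel on the model. The function $r\mapsto\int_r^\infty dt/S(t)$ is radial and harmonic off $o$, tends to $0$ at infinity, and has flux $-1$ through each sphere $\partial B_r(o)$, since its derivative is $-1/S(r)$. By the exhaustion by balls it is the limit of the Dirichlet Green kernels $g^{B_R}(o,\cdot)=\int_r^R dt/S(t)$, so it is minimal. This step is standard, and the paper already takes the formula as given.
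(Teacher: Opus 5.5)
Your proposal is correct and follows the route the paper indicates; the paper gives no separate proof, only that the result follows ``similarly to Proposition~\ref{prop_ricc}'' from Theorems~\ref{equiv_G2} and~\ref{characterization_biparb_capacity}. That is, you reduce $L^p$-parabolicity to $g(o,\cdot)\notin L^{q}(M_\sigma\setminus B_1(o))$ and integrate the explicit radial Green kernel in polar coordinates. Your separate treatment of the parabolic case matches how the paper handles it implicitly (cf.\ the proof of Corollary~\ref{Cor.Vp}).
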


To show that Conjecture~\ref{conjecture} holds for the class of model manifolds we will need the next lemma.


\begin{lemma}
\label{Lemf}Let $f$ be a continuously differentiable function on an interval
$\left( a,b\right) $ such that $f>0$ and $f^{\prime }>0.$ Then%
\begin{equation}
\int_{a}^{b}\frac{dt}{f^{\prime }(t)}\geq \frac{1}{2}\int_{a}^{b}\frac{%
\left( t-a\right) dt}{f(t)}.  \label{fab}
\end{equation}
\end{lemma}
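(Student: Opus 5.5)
The plan is to prove \eqref{fab} by a weighted Cauchy--Schwarz inequality combined with one integration by parts. This should bound $I=\int (t-a)f^{-1}\,dt$ by $\sqrt{2I\,J}$ with $J=\int dt/f'$, and dividing by $\sqrt I$ then gives $I\le 2J$.

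To avoid problems with finiteness and boundary behaviour, I first work on a compact subinterval $[a',b']\subset(a,b)$. There $f$ and $f'$ are continuous and bounded away from $0$, so every integral below is finite. Set
\[
I'=\int_{a'}^{b'}\frac{(t-a')\,dt}{f(t)},\qquad J'=\int_{a'}^{b'}\frac{dt}{f'(t)} .
\]
Write the integrand of $I'$ as the product of $\sqrt{f'}\,(t-a')/f$ and $1/\sqrt{f'}$. Cauchy--Schwarz then gives
\[
I'\le\Big(\int_{a'}^{b'}\frac{(t-a')^2 f'(t)}{f(t)^2}\,dt\Big)^{1/2}\,(J')^{1/2}.
\]
Next I integrate by parts, using $f'/f^2=\frac{d}{dt}(-1/f)$:
\[
\int_{a'}^{b'}\frac{(t-a')^2 f'}{f^2}\,dt=-\frac{(b'-a')^2}{f(b')}+2\int_{a'}^{b'}\frac{(t-a')}{f}\,dt\le 2I' .
\]
The boundary term at $a'$ vanishes because of the weight $(t-a')^2$ and because $f(a')>0$. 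The boundary term at $b'$ is nonpositive since $f>0$. It follows that $I'\le\sqrt{2I'J'}$. If $I'=0$ there is nothing to prove; otherwise, since $I'<\infty$, dividing gives $I'\le 2J'$.

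Finally I let $a'\downarrow a$ and $b'\uparrow b$. The integrand $(t-a')\mathbf 1_{[a',b']}(t)/f(t)$ increases pointwise to $(t-a)\mathbf 1_{(a,b)}(t)/f(t)$, so by monotone convergence $I'\to\int_a^b (t-a)/f$. Likewise $J'\uparrow\int_a^b dt/f'$. This yields \eqref{fab}, with the inequality trivially true when the right-hand side of the limit is infinite.

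The only delicate point is the endpoint $a$, where $f(a^+)$ may be $0$. This is why I use the shifted weight $(t-a')$ and truncate before integrating by parts, rather than integrating by parts directly on $(a,b)$. The division by $\sqrt{I'}$ also requires $I'<\infty$, which the truncation guarantees.
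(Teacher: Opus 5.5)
Your argument is correct and is essentially the paper's proof: Cauchy--Schwarz with the weight $\sqrt{f'}$, integration by parts of $(t-a)^2 f'/f^2$ with the nonpositive boundary term at the right endpoint discarded, and then division by $\int (t-a)\,dt/f$. Your truncation to $[a',b']$ with the shifted weight $(t-a')$, followed by monotone convergence, adds rigor the paper's proof lacks. The paper tacitly assumes that $\int_a^b (t-a)\,dt/f(t)$ is finite and that the boundary term at $a$ vanishes, which can fail when $f(a^+)=0$ (e.g.\ $f(t)=t^3$, $a=0$); your version also cleanly covers the case $b=\infty$ that the paper uses later.
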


\begin{proof}
Changing $t$ to $t-a$, we reduce \eqref{fab} to
\begin{equation*}
\int_{0}^{b-a}\frac{dt}{f^{\prime }(t+a)}\geq \frac{1}{2}\int_{0}^{b-a}\frac{%
tdt}{f(t+a)}.
\end{equation*}%
Hence, renaming $b-a$ into $b$ and $f(t+a)$ into $f$, we see that it
suffices to prove that
\begin{equation}
\int_{0}^{b}\frac{dt}{f^{\prime }(t)}\geq \frac{1}{2}\int_{0}^{b}\frac{tdt}{%
f(t)}  \label{fab0}
\end{equation}%
that is, \eqref{fab} in the case $a=0.$


Multiplying and dividing by ${f'(t)}^{1/2}$ and using the Cauchy-Schwarz inequality we get
\[
\int_0^b \frac{t}{f(t)} dt \leq
\left(
\int_0^b \frac{t^2}{f(t)^2} f'(t) dt
\right)^{1/2}
\left(
\int_0^b \frac{1}{f'(t)} dt
\right)
^{1/2}.
\]
Integrating by parts in the first integral on the right hand side gives
\[
\int_0^b \frac{t^2}{f(t)^2} f'(t) dt = - \frac{b^2}{f(b)} +2\int_0^b \frac{t}{f(t)}dt\leq 2\int_0^b \frac{t}{f(t)}dt,
\]
whence inserting into the above inequality and simplifying yields \eqref{fab}.
\end{proof}

\begin{corollary}\label{Cor.Vp}
Let $p \in (1,\infty)$. If
\begin{equation}\label{Vp1}
\int^{\infty }r\left( \int_{r}^{\infty }\frac{tdt}{V(t)}\right) ^{\frac{1}{
p-1}}dr=\infty
\end{equation}
then $M_\sigma$ is $L^{p}$-parabolic.
\end{corollary}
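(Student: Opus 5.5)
The plan is to reduce Corollary~\ref{Cor.Vp} to Proposition~\ref{TSS}. Write $q=\frac{p}{p-1}$, so that $q-1=\frac{1}{p-1}$, and set
\[
g(r)=\int_r^\infty \frac{dt}{S(t)},\qquad H(r)=\int_r^\infty \frac{t\,dt}{V(t)} .
\]
Since $V'=S>0$ on a model manifold, the goal is to show that \eqref{Vp1} forces \eqref{SS}, that is $\int^\infty g^q S\,dr=\infty$. We may assume $M_\sigma$ is non-parabolic, so that $g(r)<\infty$; otherwise $M_\sigma$ is $L^p$-parabolic by Corollary~\ref{cor_hier}.

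\textbf{Step 1 (pointwise comparison).} Apply Lemma~\ref{Lemf} with $f=V$ on $(r,b)$ and let $b\to\infty$. This gives $g(r)\geq \frac12\int_r^\infty \frac{(t-r)\,dt}{V(t)}$. Since $t-r\geq t/2$ for $t\geq 2r$, it follows that
\[
g(r)\geq \tfrac14 H(2r).
\]
Thus the Green function of the model is bounded below by the Li--Yau type quantity, and this is exactly the lower half of \eqref{li-yau} that was used in Proposition~\ref{prop_ricc}.

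\textbf{Step 2 (integration by parts).} Integrate by parts as in the proof of Corollary~\ref{Vp_ricc}, keeping $g$ rather than $H$. The boundary term $g(R)^qV(R)$ is nonnegative and can be dropped, which gives
\[
\int_1^R g^q S\,dr \;\geq\; -V(1)g(1)^q+q\int_1^R g(r)^{q-1}\frac{V(r)}{S(r)}\,dr .
\]
It then suffices to show that $\int^\infty g^{q-1}\,V/S\,dr=\infty$. Because $g$ is decreasing, I would decompose into dyadic blocks $[2^k,2^{k+1}]$ and bound each block from below by
\[
g(2^{k+1})^{q-1}\int_{2^k}^{2^{k+1}}\frac{V}{S}\,dt .
\]
Cauchy--Schwarz on the block gives
\[
\int_{2^k}^{2^{k+1}}\frac{V}{S}\,dt\;\geq\;\frac{4^k}{\log\bigl(V(2^{k+1})/V(2^k)\bigr)} .
\]
In blocks where the doubling ratio $V(2^{k+1})/V(2^k)$ is bounded, this gives $\gtrsim 4^k$. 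Combined with Step 1, these blocks then dominate $\int_{2^k}^{2^{k+1}} r\,H(4r)^{q-1}dr$. The shift from $H(4r)$ back to $H(r)$ should be absorbable by a change of variable together with the monotonicity of $H$, so summing over blocks recovers the divergent integral \eqref{Vp1}.

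\textbf{Main obstacle.} The hard part is the blocks where $V$ grows very fast, since there $V/S$ can be small and the estimate above degenerates. My first attempt is to show that such blocks contribute negligibly to \eqref{Vp1}. Where $V$ jumps by a large factor, $H$ beyond that scale is controlled by $r^2/V(r)$, which is tiny, so these blocks should be summable, and divergence of \eqref{Vp1} must come from the doubling-type blocks handled above. If this case split turns out to be too lossy, the fallback is to avoid $V/S$ altogether. One would use the two-variable Cauchy--Schwarz idea of Lemma~\ref{Lemf}, with weight $r\,g^{q-1}$, directly on $\int g^q S$ to compare it with $\int r\,g(r)^{q-1}dr$, and then conclude with Step 1.
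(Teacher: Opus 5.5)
\textbf{There is a genuine gap in Step~2: the fast-growth blocks are left unresolved, and the heuristic you give for them is false.}

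Your Step~1 is exactly the paper's first step. The integration by parts at the start of Step~2 loses nothing: since $V$ is increasing, $V(R)g(R)^q\le q\int_R^\infty g^{q-1}V/S\,dr$, so $\int^\infty g^qS\,dr=\infty$ if and only if $\int^\infty g^{q-1}V/S\,dr=\infty$. The problem is the lower bound for $\int g^{q-1}V/S$.

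A large ratio $V(2^{k+1})/V(2^k)$ gives no bound on $H$ at that scale, because $H(r)=\int_r^\infty t\,dt/V(t)$ depends on the growth of $V$ at all larger scales. Even when $H(r)\asymp r^2/V(r)$, the bad blocks need not be summable. Insert one jump by a factor $2D$ every $m$ blocks into $V\approx r^{\alpha}$ with $2<\alpha<2p$ and $m$ large. Then $V$ still grows slower than $r^{2p}$ on average, so \eqref{Vp1} diverges. Yet the bad blocks alone contribute $\gtrsim\sum_k 4^k\bigl(4^k/V(2^k)\bigr)^{q-1}=\infty$.

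So you cannot discard the bad blocks. What you would actually need is that each maximal run $k_0\le k\le k_1$ of bad blocks is dominated by the adjacent good blocks. This can be proved for $D>4^p$. Write $H(2^{k+2})=\int_{2^{k+2}}^{2^{k_1+3}}t\,dt/V+H(2^{k_1+3})$. The first term is at most $C_D4^k/V(2^{k+2})$; it decays geometrically along the run and is controlled by block $k_0-1$ through $g(2^{k_0})\ge 9\cdot 4^{k_0}/V(2^{k_0+2})$. The second term is at most $4g(2^{k_1+2})$ and is absorbed by block $k_1+1$. None of this is in your proposal.

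\textbf{The missing idea is the one your fallback points to, and it is precisely the paper's proof.} Apply Lemma~\ref{Lemf} a second time, now to $f=g^{-1/(p-1)}$. This $f$ is positive and increasing, with $1/f=g^{q-1}$ and $1/f'=(p-1)g^{q}S$. On $(1,b)$ the lemma gives
\[
\int_1^b (r-1)\,g(r)^{q-1}\,dr\le 2(p-1)\int_1^b g(r)^{q}S(r)\,dr .
\]
Step~1 gives $g(r)^{q-1}\ge 4^{1-q}H(2r)^{q-1}$. By \eqref{Vp1}, $\int^\infty rH(2r)^{q-1}dr=\frac14\int^\infty sH(s)^{q-1}ds=\infty$. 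Hence \eqref{SS} holds and Proposition~\ref{TSS} concludes.

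This argument never touches $V/S$. The Cauchy--Schwarz step of Lemma~\ref{Lemf} pairs $g^qS$ with $r^2g^{q-2}/S=-\frac{r^2}{q-1}(g^{q-1})'$, which integrates by parts back to $\int r\,g^{q-1}$. So no information about the local growth of $V$ is needed, and the case analysis disappears. As written, however, this appears only as a one-sentence fallback, and the argument you actually develop stops at the unresolved case.
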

\begin{proof}
From a simple change of variables in the external integral we can rewrite \eqref{Vp1} as
\begin{equation*}
\int^{\infty }r\left( \int_{2r}^{\infty }\frac{tdt}{V(t)}\right) ^{\frac{1}{%
p-1}}dr=\infty  .
\end{equation*}%
By Lemma \ref{Lemf}, we have%
\begin{equation*}
\int_{2r}^{\infty }\frac{tdt}{V(t)}\leq 2\int_{r}^{\infty }\frac{\left(
t-r\right) dt}{V(t)}\leq 4\int_{r}^{\infty }\frac{dt}{S(t)}.
\end{equation*}%
Hence, \eqref{Vp1} implies that%
\begin{equation}
\int^{\infty }r\left( \int_{r}^{\infty }\frac{dt}{S(t)}\right) ^{\frac{1}{p-1%
}}dr=\infty .  \label{Sr}
\end{equation}%
Let us show that \eqref{Sr} implies \eqref{SS}. For that, consider the
function
\begin{equation*}
f(r)=\left( \int_{r}^{\infty }\frac{dt}{S(t)}\right) ^{-\frac{1}{p-1}}.
\end{equation*}%
We may assume that $M_\sigma$ is non-parabolic, and therefore $f(r)>0$, for otherwise $M_\sigma$ is automatically $L^p$-parabolic. It follows from \eqref{Sr} that
\begin{equation*}
\int_{1}^{\infty }\frac{rdr}{f(r)}=\infty ,
\end{equation*}%
whence also
\begin{equation*}
\int_{1}^{\infty }\frac{\left( r-1\right) dr}{f(r)}=\infty .
\end{equation*}%
By Lemma \ref{Lemf} we have%
\begin{equation*}
\int_{1}^{\infty }\frac{\left( r-1\right) dr}{f(r)}\leq 2\int_{1}^{\infty }%
\frac{dr}{f^{\prime }(r)},
\end{equation*}%
and, hence,%
\begin{equation}
\int_{1}^{\infty }\frac{dr}{f^{\prime }(r)}=\infty .  \label{f'}
\end{equation}%
We clearly have%
\begin{equation*}
f^{\prime }(r)=\frac{1}{p-1}\left( \int_{r}^{\infty }\frac{dt}{S(t)}\right)
^{-\frac{p}{p-1}}\frac{1}{S(r)}.
\end{equation*}%
Substituting this into \eqref{f'} we obtain \eqref{SS}. Hence, $M_\sigma$ is $L^{p}
$-parabolic by Proposition~\ref{TSS}.
\end{proof}

The next example shows the almost optimality of the pointwise volume condition described in Theorem~\ref{vol_cond_general} and the optimality of the integral volume condition given in Corollaries~\ref{Vp_ricc} and \ref{Cor.Vp}. 

\begin{example}\label{example_optimality}\rm
Let $M_\sigma$ be a model manifold with
$$S(r) = c_m \sigma(r)^{m-1} = c_m r^{\alpha-1} (\log r)^\beta$$
 for $r\geq 2$, with $\alpha> 2$ and real $\beta$.
 Note that in this case
$$V(r) \asymp r^{\alpha} (\log r)^\beta.$$
 Since  $S(r)^{-1}$ is integrable at infinity, the manifold $M_\sigma$ is non-parabolic and its Green kernel with pole at $o$ satisfies for all \(r\gg 1\)
\[
g(r)=\int_{r}^\infty \frac{dt}{S(t)}\asymp r^{2-\alpha }\left( \log r\right) ^{-\beta }.
\]
It follows that
\[
\begin{split}
\left( \int_{r}^{\infty }\frac{dt}{S(t)}\right) ^{\frac{p}{p-1}}S(r) &\asymp r^{\frac{p+1-\alpha }{p-1}}\left( \log r\right) ^{-\frac{\beta }{p-1}}.
\end{split}
\]
Therefore, the integral in (\ref{SS}) is divergent and, hence, $M_\sigma$ is $L^p$-parabolic by Proposition~\ref{TSS},  if and only if either $\alpha<2p$, or $\alpha = 2p$ and $\beta\leq p-1$.
\end{example}

\appendix
\section{}
\label{appendix}


In this appendix, we first establish a regularity property of bounded potentials, which is a required step in the proof of Theorem \ref{parabolicity_measure_dual}. Following this, we present the proof of Theorem \ref{BMP-Thm}, which follows from an adaptation of the original argument in \cite[Thm.E.1]{brezis_marcus_ponce}. The key point in both arguments is the use of convolutions to mollify test functions. In the Riemannian setting, we employ two different approximation procedures: one based on heat semigroup regularization, and the \(L^1\)-approximation introduced in \cite{pigola_valtorta_veronelli_NLA}.

Let $P_t^U$ be the Dirichlet heat semigroup on a bounded smooth open set $U$. Thus, for every $u \in L^2(U)$, the function 
\begin{equation}\label{def_h_t}
u_t(x) = P_t^U u(x) = \int_U p^U_t(x,y)u(y) d\mu(y)
\end{equation}
is a solution to 
\[
\begin{cases}
\partial_t u_t = \Delta u_t  &\text{in } U\times (0,\infty),\\
u_t = 0 &\text{on } \partial U,
\end{cases}
\]
where $p_t^U(x,y)$ is the Dirichlet heat kernel of $U$.
Moreover, if $u \in L^2(U)$, respectively $u\in W^{1,2}_0(U)$ ($=W^{1,2}(U)$ for regular domains)  then, by \cite[Ex.7.37 \& Thm.7.20]{grigoryan_book}
\begin{equation}\label{reg_h_t}
u_t \to u 
\quad 
\text{in} \ L^2(U), \quad \text{ resp. in } W^{1,2}(U), \ \ \text{as} \ t \to 0.
\end{equation}
We note that, for $u \geq 0$ we have $u_t > 0$ in $U$ and
$$ \int_U u_t d\mu \leq \int_U u d\mu.$$
Indeed for every $t>0$,
\begin{eqnarray*}
\frac{d}{dt}\int_U  u_t d\mu &=& \int_U \frac{d}{dt}u_t d\mu \\[0.2cm]
&=& \int_U \Delta u_t d\mu \\[0.2cm]
&=& \int_{\partial U} \langle \nabla u_t, n\rangle d\sigma \leq 0,
\end{eqnarray*}
where $n$ is the outward unit normal vector field along $\partial U$. The last inequality follows from the fact that $u_t>0$ in $U$ and $u_t =0$ on $\partial U$.

We claim that if $u\in W^{1,2}(U)$ then  $P_t^U$ is energy decreasing, that is,
\begin{equation}\label{ineq_nablau_t}
    \int_U \vert \nabla u_t\vert^2 d\mu \leq \int_U \vert \nabla u\vert^2 d\mu.
\end{equation}

\noindent Indeed, differentiation with respect to $t$ gives
\begin{eqnarray*}
\frac{d}{dt}\int_U \langle \nabla u_t, \nabla u_t \rangle d\mu &=& 2\int_U \langle \nabla \frac{d}{dt}u_t, \nabla u_t\rangle d\mu\\[0.2cm]
&=& - 2 \int_U \frac{d}{dt}u_t \Delta u_t d\mu + 2\int_{\partial U} \frac{d}{dt}u_t \langle\nabla u_t,n\rangle d\sigma\\[0.2cm]
&=& - 2\int_U \vert \Delta u_t\vert^2 d\mu\leq 0,
\end{eqnarray*}
where we have used that $u_t = 0$ on $\partial U$ implies $\frac{d}{dt}u_t = 0$ on $\partial U$. 
The conclusion now follows  since $||\nabla u_t||_2\to ||\nabla u ||_2$, as $t\to 0$.

The heat semigroup regularization is a useful tool to prove the following regularity result for bounded potentials.

\begin{lemma}
\label{lemma:hW12}
Let $U$ be a bounded open set with smooth boundary $\partial U$ in a Riemannian manifold $(M,g)$. Given a finite Radon measure $\nu$ supported in  a compact subset $K\subset U$, let $h=G^U \nu$ be the associated potential, so that $\Delta h= -\nu$ in the sense of distributions. If $h\in L^\infty(U)$ then $h\in W^{1,2}(U)$.
%
\end{lemma}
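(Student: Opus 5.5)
The plan is to approximate $h$ by smooth functions that vanish on $\partial U$, bound their Dirichlet energy uniformly by $\|h\|_\infty\,\nu(K)$, and pass to the limit using weak compactness in $W^{1,2}_0(U)$. The natural approximation is the heat semigroup regularization $h_t=P^U_t h$ from \eqref{def_h_t}. Since $h\in L^\infty(U)\subset L^2(U)$ (recall that $U$ is bounded), we have $h_t\in C^\infty(U)$. By smoothing properties of the Dirichlet heat kernel on a smooth bounded domain, $h_t\in C^\infty(\overline U)$ with $h_t=0$ on $\partial U$. Moreover, $h_t\to h$ in $L^2(U)$ by \eqref{reg_h_t}.

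The energy bound is obtained as follows. Integrating by parts using $h_t|_{\partial U}=0$ gives
\[
\int_U|\nabla h_t|^2\,d\mu=-\int_U h_t\,\Delta h_t\,d\mu=-\int_U h_t\,\Delta P^U_t h\,d\mu .
\]
We then move the semigroup and the Laplacian onto $h$, using the symmetry of $P^U_t$ and the fact that it commutes with $\Delta$ on functions vanishing on $\partial U$. This yields $-\int_U h\,\Delta\big(P^U_t h_t\big)\,d\mu$.

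To evaluate this expression, we need the identity
\[
\int_U h\,\Delta f\,d\mu=-\int_U f\,d\nu
\]
for $f\in C^2(U)\cap C^1(\overline U)$ with $f=0$ on $\partial U$. This follows from Green's representation $f=-G^U(\Delta f)$, which has no boundary term, together with Fubini–Tonelli applied to $h=G^U\nu$. Here we use that $g^U$ is symmetric and that $\nu$ is finite with compact support, so all integrals converge. Applying the identity with $f=P^U_t h_t=P^U_{2t}h$, which is smooth up to the boundary and vanishes there, gives
\[
\int_U|\nabla h_t|^2\,d\mu=\int_K P^U_{2t}h\,d\nu\le \nu(K)\,\|h\|_{L^\infty(U)},
\]
because $P^U_t$ is a contraction on $L^\infty$. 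If needed, the integration by parts can be made rigorous with cutoffs $\chi_n\nearrow 1$, as in the proof of Theorem~\ref{parabolicity_measure_dual}.

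It follows that $\{h_t\}$ is bounded in $W^{1,2}_0(U)$. Take a sequence $t_k\to0$ along which $h_{t_k}$ converges weakly in $W^{1,2}_0(U)$. Since $h_{t_k}\to h$ in $L^2$, the weak limit must be $h$. Hence $h\in W^{1,2}_0(U)\subset W^{1,2}(U)$, and by lower semicontinuity
\[
\int_U|\nabla h|^2\,d\mu\le \nu(K)\,\|h\|_\infty .
\]

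The main obstacle is justifying the boundary behaviour and regularity needed for these manipulations. Specifically, we need $P^U_t h$ to be $C^2$ up to $\partial U$ with zero boundary values, for merely bounded initial data, and we need the commutation $\Delta P^U_t=P^U_t\Delta$ on such functions. I would handle this through the spectral decomposition: $P^U_t h=\sum_j e^{-\lambda_j t}\langle h,\phi_j\rangle\phi_j$, where the Dirichlet eigenfunctions $\phi_j$ are smooth up to the boundary. Combined with Weyl-type growth of $\lambda_j$ and elliptic estimates for the $\phi_j$, this series converges in $C^k(\overline U)$. The commutation then holds termwise, and symmetry of $P^U_t$ is immediate from the expansion.
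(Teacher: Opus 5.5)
Your proof is correct and follows the paper's argument. Both use the heat-semigroup regularization $h_t=P^U_t h$, the uniform bound $\int_U|\nabla h_t|^2\,d\mu\le\|h\|_{L^\infty(U)}\,\nu(K)$, and weak compactness in $W^{1,2}$ with the limit identified as $h$; the only cosmetic difference is that the paper first derives $-\Delta h_t=\nu_t:=P^U_t\nu$ and bounds $\int_U h_t\nu_t\,d\mu\le\|h_t\|_\infty\|\nu_t\|_{L^1}$, whereas you transfer everything onto $h$ via the identity $\int_U h\,\Delta f\,d\mu=-\int_U f\,d\nu$ (used in the proof of Theorem~\ref{parabolicity_measure_dual}) to obtain $\int_K P^U_{2t}h\,d\nu$, which is the same quantity.
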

\begin{proof}
Let $p^U_t(x,y)$ denote the Dirichlet heat kernel of $U$ and set 
\[
h_t(x) \doteq P^U_t h(x) =\int_U p_t^U(x,y) h(y) d\mu(y).
\]
%
%
%
Since $-\Delta h=\nu$ in the sense of distributions, $P^U_t$ is symmetric and $P^U_t \Delta \psi=\Delta P_t^U \psi$ for every $\psi \in C^\infty_c(U)$, 
we have
\[
\begin{split}
 \langle \psi, \Delta h_t\rangle &=  \langle   \psi, \Delta P_t^U h\rangle \\
 &=  \langle   \Delta P_t^U  \psi, h\rangle  \\
 &= \langle P_t^U\psi, - \nu \rangle\\
 & =\langle \psi, - P_t^U \nu\rangle = \langle \psi, -\nu_t\rangle,
\end{split}
\]
with $\nu_t=P_t^U \nu$. 
Since both $h_t$ and $\nu_t$ are smooth for $t>0$,
it follows that 
\begin{equation}\label{eq:pde_regularized}
-\Delta h_t=\nu_t .
\end{equation}

The sub-Markovian property of the heat kernel shows that
the semigroup $P^U_t$ is contractive on every $L^p$, 
and therefore 
\begin{equation}\label{eq:linfty_bound}
\|h_t\|_{L^\infty(U)}
\le
\|h\|_{L^\infty(U)}.
\end{equation}
Similarly, 
integrating over $U$ and using Fubini's theorem yield
\[
\begin{aligned}
0\leq \int_U \nu_t(x)\,d\mu(x)
&\le
\int_U
\left(
\int_U p^U_t(x,y)\,d\mu(x)
\right)
d\nu(y) \\
&\le
\int_U d\nu(y)
=
\|\nu\|_{\mathcal M} .
\end{aligned}
\]
Hence
\begin{equation}\label{eq:measure_bound}
\|\nu_t\|_{L^1(U)}
\le
\|\nu\|_{\mathcal M} .
\end{equation}
Multiplying the equality $-\Delta h_t = \nu_t$ by $ h_t$, integrating  over $U$ and using $h_t{_{|\partial U}}=0$, we obtain
\[
\int_U
|\nabla h_t|^2
\,d\mu 
=
\int_U
h_t \nu_t
\,d\mu \leq ||h_t||_{L^\infty(U)} ||\nu_t||_{L^1(U)}\leq  ||h||_{L^\infty(U)} \|\nu\|_{\mathcal M} .
\]
The right-hand side is independent of $t$. Therefore
$\{h_t\}_{t>0}$ is uniformly bounded in $W^{1,2}(U)$.

Since $W^{1,2}(U)$ is reflexive,
bounded sequences have weakly convergent subsequences. Hence, if $\{t_k\}$ is a sequence such that  $t_k\downarrow 0$ then there exist a subsequence   $t_{k'}\downarrow 0$ and $v\in W^{1,2}(U)$ such that $h_{t_{k'}}\to v$ weakly in $W^{1,2}(U)$. In particular, for every $\rho \in C^\infty_c(U)$,
\[
\langle \rho, P^U_{t_{k'}} h\rangle \to \langle \rho, v\rangle = \int_U \rho v \, d\mu, \text{ as } k'\to \infty.
\]
On the other hand, since $ P^U_{t_{k'}} \rho \to \rho$ uniformly, 
\[
\langle \rho, P^U_{t_{k'}} h\rangle 
= \int_U \rho P^U_{t_{k'}} h d\mu = \int_U h  P^U_{t_{k'}} \rho\, d\mu \to
\int_U \rho h\, d\mu, \text{ as } k'\to \infty
\]
and  $h=v\in W^{1,2}(U)$, as required. 
\end{proof}

\begin{remark} 
{\rm By general theory, a solution of $\Delta h = -\nu$ only satisfies $h\in W^{1,p}_{loc}(U)$ for $p\leq n/(n-1)$ (see \cite{Boccardo_Gallouet}). 
The increased regularity depends on the specific form of the function $h$ together with the assumption that $h$ is bounded. We also note that the above proof applies if $\nu$ is a complex compactly supported Radon measure.}
\end{remark}

We now turn to the $L^1$-approximation introduced in \cite{pigola_valtorta_veronelli_NLA}. To this end, let $\tilde U$ be a pre-compact open set with smooth boundary such that $U' \Subset \tilde U \Subset U$ and let $u_r$ be the function defined on $\tilde U$ by
\begin{equation}\label{def_u_r}
u_r(x) = \int_{\tilde U} K_r(x,y) u(y) d\mu(y),
\end{equation}
where $K_r(x,y)$ is the non-symmetric kernel defined by 
\[
K_r(x,y)\doteq \dot \psi \left(g^{\tilde U}(x,y)-r\right) |\nabla_y g^{\tilde U}(x,y)|^2,
\]
and
$\psi:\mathbb R \to [0,1]$ is a smooth function such that
\begin{align*}
\dot \psi \geq 0 \ \, \qquad \qquad \supp\left(\dot \psi\right)\subseteq [-1,1]\ \, , \qquad \qquad \int_{\mathbb R} \dot \psi=1.
\end{align*}
By \cite[Theorem 4.7]{aubin}, there exists a constant $C>0$ such that $g^{\tilde U}(x,y)\leq C d(x,y)^{2-n}$. It follows that for every $R>0$ there exists $r_0=r_0(R)$ such that $K_r(x,y)=0$  for every $x,y$ with $d(x,y)\geq R$ and every $r\geq r_0$. 
Thus, for every $\varepsilon >0 $, $u_r$ is supported in an $\varepsilon$-neighborhood of $\supp u$ for every sufficiently large $r$. In particular, for every $r$ large enough, $u_r$ is compactly supported  in $\tilde U$ and can be extended to a $C^\infty_c$ function on $U$, which we will still denote by $u_r$. 
 
We also remark that, by \cite[Eq. (2.15)]{pigola_valtorta_veronelli_NLA}, 
$$\int_{\tilde U} K_r(x,y) d\mu(y) = 1.$$
Moreover, since  $u$ is superharmonic in $U'$, it follows from \cite[Thm.2.1]{pigola_valtorta_veronelli_NLA} that $u_r$
satisfies
\begin{itemize}
    \item[i)] $u_r \nearrow u$ uniformly on a neighborhood of $K$;
    \item[ii)] the Laplacian of $u_r$ is given by
    \begin{equation}\label{def_Deltau_r}
    \begin{split} \Delta u_r(x) &= \int_{\tilde U} K_r(y,x) \Delta u(y) d\mu(y),
    \end{split}
	\end{equation}
where $\Delta u$ 
is the distributional Laplacian of $u$. 
\end{itemize}
Applying the Fubini-Tonelli theorem, we finally obtain 
\begin{equation}\label{ineq_Lap_u_r}
\int_U \vert \Delta u_r (x)\vert d\mu(x) \leq \Vert \Delta u\Vert_{\mathcal{M}}.
\end{equation}

We are now in a position to prove Theorem~\ref{BMP-Thm}.

\begin{theorem}
Let  $U\subset M$ be  open. Then, for every compact domain $K\subset U$ we have
$$\text{\rm Cap}_{2,1}(K,U) = 2\text{\rm Cap}(K,U).$$
\end{theorem}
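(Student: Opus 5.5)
We prove the two inequalities separately, following the scheme of \cite[Thm.E.1]{brezis_marcus_ponce}. By the regularity of both capacities with respect to exhaustions (as in Proposition~\ref{prop_limit_regularity}), we first reduce to the case where $U$ is bounded with smooth boundary and $K$ is a compact domain with smooth boundary.

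\emph{Upper bound $\mathrm{Cap}_{2,1}(K,U)\le 2\,\mathrm{Cap}(K,U)$.} Let $w$ be the capacity potential of $(K,U)$. Replacing $w$ by $w/(1-\delta)$ gives a function $u=\min\{w/(1-\delta),1\}$ that equals $1$ on a neighborhood of $K$. Then $u$ is superharmonic in $U$ and vanishes on $\partial U$. Its distributional Laplacian is a signed measure: a negative part $-\nu^-$ concentrated on the level set $\{w=1-\delta\}$, of mass $\mathrm{Cap}(K,U)/(1-\delta)$ (flux of $\nabla w$), and a positive part $\nu^+$ on $\partial U$ (after extending $u$ by $0$), of the same mass by the divergence theorem. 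This is exactly the structure of \eqref{Lapla_dist} in the proof of Theorem~\ref{Cap2,2_C_g2}. To place the boundary layer inside $U$, we work with a slightly smaller smooth domain $U'\Subset U$ and use Proposition~\ref{prop_limit_regularity} for $\mathrm{Cap}$. Then we mollify with the kernel $K_r$ of \eqref{def_u_r}. By \eqref{ineq_Lap_u_r},
\[
\int_U|\Delta u_r|\,d\mu\le\|\Delta u\|_{\mathcal M}=\frac{2\,\mathrm{Cap}(K,U')}{1-\delta}.
\]
Since $u\equiv1$ near $K$ and $u_r\nearrow u$ uniformly near $K$, the function $u_r/(1-\delta')$ is an admissible test function. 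Letting $r\to\infty$, then $\delta,\delta'\to0$ and $U'\to U$ gives the bound.

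\emph{Lower bound $\mathrm{Cap}_{2,1}(K,U)\ge 2\,\mathrm{Cap}(K,U)$.} Take $u\in C^\infty_c(U)$ with $u\ge1$ on $K$. By Sard's theorem choose a regular value $t\in(0,1)$ and consider $\Omega_t=\{u>t\}\supset K$. On $\Omega_t$ we have $\int_{\{u>t\}}\Delta u=-\int_{\{u=t\}}|\nabla u|$, and on its complement $\int_{\{u<t\}}\Delta u=\int_{\{u=t\}}|\nabla u|$, since $u$ is compactly supported. Hence
\[
\int_U|\Delta u|\,d\mu\ge 2\int_{\{u=t\}}|\nabla u|\,d\sigma.
\]
By the coarea formula, $\int_U|\nabla u|^2\,d\mu\ge\int_0^1\int_{\{u=s\}}|\nabla u|\,d\sigma\,ds$, so some regular level $t$ satisfies
\[
\int_{\{u=t\}}|\nabla u|\,d\sigma\;\ge\;\mathrm{Cap}(\{u\ge t\},U).
\]
It is cleaner to use the flux directly: for a smooth compact set $F=\{u\ge t\}\supset K$, the capacity equals the minimal flux. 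Indeed $\mathrm{Cap}(F,U)=\int_{\partial F}|\partial_n w_F|$, and since $(u-t)_+/(1-t)$-type comparisons are awkward, we use the dual formulation Theorem~\ref{parabolicity_measure_dual} instead. Let $\nu^F$ be the equilibrium measure of $F$, so that $G^U\nu^F\le1$. Then
\[
\mathrm{Cap}(F,U)=\nu^F(F)\le\int u\,d\nu^F\cdot t^{-1}\cdots
\]
More concretely, write $u=G^U(-\Delta u)$ and pair with $\nu^F$. This gives
\[
\int u\,d\nu^F=\int(-\Delta u)\,G^U\nu^F\,d\mu\le\int(\Delta u)^-\,d\mu=\tfrac12\int|\Delta u|\,d\mu,
\]
using $0\le G^U\nu^F\le1$ and $\int_U\Delta u=0$. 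Taking $F=K$, so that $u\ge1$ on $\operatorname{supp}\nu^K$, yields
\[
\mathrm{Cap}(K,U)\le\tfrac12\int_U|\Delta u|\,d\mu,
\]
and the infimum over $u$ gives the claim. For noncompact $U$, where $\int_U\Delta u=0$ still holds since $u$ has compact support, pass through an exhaustion.

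\emph{Main obstacle.} The delicate point is the upper bound. One must ensure that the $K_r$-mollification applies to $u$, which is superharmonic only away from the cut-off layers; this is why we arrange $u$ superharmonic in $U'$ as in the proof of Theorem~\ref{Cap2,2_C_g2}. One must also check that the total variation of $\Delta u$ is exactly twice the capacity, i.e.\ that the flux on the inner level set equals the flux on $\partial U'$. We will handle this exactly as for \eqref{Lapla_dist}.
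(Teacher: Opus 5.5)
Your argument is correct. For the upper bound you follow the paper's strategy. You truncate the capacity potential so that its distributional Laplacian consists of two layers of equal mass $\mathrm{Cap}/(1-\delta)$, smooth with $K_r$, and invoke \eqref{ineq_Lap_u_r}. The only difference is the truncation. The paper uses $(u-\delta)_+/(1-2\delta)$, which puts the negative layer on $\partial K$ and the positive one on $\{u=\delta\}$. Your $\min\{w/(1-\delta),1\}$ for the potential of $(K,U')$, extended by zero, reproduces exactly the structure \eqref{Lapla_dist} and stays superharmonic on all of $U'$. (The exhaustion property you need is the analogue of Proposition~\ref{prop_limit_regularity} for $\mathrm{Cap}$, not that proposition itself, but it is immediate since test functions have compact support.)

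For the lower bound you take a genuinely different, dual route. The paper works in the primal formulation: $\tilde\varphi=\min\{1,\varphi_+\}$ is a competitor for $\mathrm{Cap}(K,U)$, and integration by parts gives $\int|\nabla\tilde\varphi|^2=-\int\tilde\varphi\,\Delta\varphi$. Then $\int\Delta\varphi=0$ together with $|\tilde\varphi-\tfrac12|\le\tfrac12$ yields the factor $\tfrac12$. You instead write $u=G^U(-\Delta u)$, pair it with the equilibrium measure $\nu^K$, and use $0\le G^U\nu^K\le 1$ and $\int\Delta u=0$.

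The mechanism is the same in both: a mean-zero density tested against a function with values in $[0,1]$. The paper applies it to a truncation of the test function, you apply it to the capacitary potential. The paper's version is more elementary, using only integration by parts, with no Green kernel, Green representation or Riesz decomposition. Yours is a one-line consequence of the representation $w=G^U\nu^K$, $\nu^K(K)=\mathrm{Cap}(K,U)$, from the proof of Theorem~\ref{parabolicity_measure_dual}. It also makes visible the parallel with the duality proof of $C_p\ge\mathrm{Cap}_{2,p}$ in Theorem~\ref{Cap2,2_C_g2}.

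Do remove the Sard/coarea digression and the unfinished line ending in $\cdots$. Nothing there is used once you take $F=K$. Moreover, the assertion that some regular level $t$ satisfies $\int_{\{u=t\}}|\nabla u|\,d\sigma\ge\mathrm{Cap}(\{u\ge t\},U)$ does not follow from the coarea inequality you wrote.
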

\begin{proof}
We first prove that  for every $\varepsilon>0$, there exists $\psi \in C^\infty_{c,+}(U)$ such that $\psi \geq 1$ in some neighborhood of $K$ and
\begin{equation}\label{lemm.E.1}
\int_U \vert \Delta \psi\vert d\mu \leq 2\text{\rm Cap}(K,U) + \varepsilon
\end{equation}
(see \cite[Lemma.E.1]{brezis_marcus_ponce}).
Without loss of generality, we can assume that $U$ is a bounded open set with smooth boundary and that $K$ is a pre-compact subset of  $U$. Let $u$ be the capacitary potential of the condenser $(K,U)$ with respect to the standard capacity, that is, $u \in H^1_0(U)$, $0\leq u \leq 1$, $u=1$ in $K$ and
$$\int_U \vert\nabla u\vert^2 d\mu = \text{\rm Cap}(K,U).$$
Arguing as in the proof of Theorem~\ref{Cap2,2_C_g2}, a standard application of the divergence theorem shows that  $u$ is weakly superharmonic and that $\Delta
u $ is the negative measure supported on $\partial K$  defined by
\[
\langle f,\Delta u \rangle = 
\int _{\partial K} f \langle \nabla u,n\rangle d\sigma,\quad \forall f\in C(U).
\]
where $n$ is the outward unit normal to $K$. 
Since $u=1$ on $\partial K,$ and $u=0$ on $\partial U$, it follows that 
\[\begin{split}
\|\Delta u\|_{\mathcal{M}}&= 
-\int_{\partial K} 1 \langle \nabla u,n\rangle d\sigma =
-\int_{\partial K} u \langle \nabla u,n\rangle d\sigma \\
&= \int_{U\setminus K} |\nabla u|^2 d\mu =  \text{\rm Cap}(K,U).
\end{split}
\]
As in the proof of \cite[Lemma.E.1]{brezis_marcus_ponce}, for $\delta>0$ small we define
$$v = \frac{(u-\delta)_+}{1-2\delta} \in H^1_c(U).$$
Since $\Delta u=0 $ in $U\setminus \partial K$, arguing as above shows that $\Delta v$ is the finite measure supported on  $\partial \{0<v< (1-\delta)(1-2\delta)^{-1}\}=\partial\{\delta <u<1\}$ defined by 
\[
\langle f,\Delta v \rangle = -
\frac 1 {1-2\delta} \int _{\partial\{\delta<u<1\}} f \langle \nabla u,\tilde n\rangle d\sigma,\quad \forall f\in C(U),
\]
where $\tilde n$ denotes the outward pointing unit normal to $\{\delta <u<1\}$.
Furthermore, since $v$ is compactly supported in $U$ and since $v=(u-\delta)/(1-2\delta)$ in a neighborhood of $K,$
\[
0= \langle 1, \Delta v\rangle =
\frac 1 {1-2\delta} \int_{\partial K} \Delta u +\int_{ \partial \{v>0\}} \Delta v,
\]
and therefore
\[
\|\Delta v\|_{\mathcal M} 
= 2\int_{\partial K} \vert \Delta v\vert d\mu = \frac{2}{1-2\delta}\Vert \Delta u\Vert_{\mathcal{M}} = \frac{2}{1-2\delta}\text{\rm Cap}(K,U).
\]
At this point, we consider the $L^1$-approximation $v_r$ of $v$ defined in \eqref{def_u_r}. 
By construction, $v_r \in C^\infty_c(U)$ and $v_r \geq 1$ in some neighborhood of $K$. Moreover, by \eqref{ineq_Lap_u_r}
$$\int_U \vert \Delta v_r (x)\vert d\mu(x) \leq \frac{2}{1-2\delta}\text{\rm Cap}(K,U).$$
Choosing $\delta$ such that $\frac{4\delta}{1-2\delta}\text{\rm Cap}(K,U) < \varepsilon$ we obtain \eqref{lemm.E.1}. Letting $\varepsilon \to 0$ we conclude that 
$$ \text{\rm Cap}_{2,1}(K,U) \leq 2\text{\rm Cap}(K,U).$$

For the other inequality, namely
$$ \text{\rm Cap}(K,U) \leq \frac{1}{2}\text{\rm Cap}_{2,1}(K,U),$$
we consider $\varphi \in C^\infty_c(U)$ such that $\varphi\geq 1$ in some neighborhood of $K$. Set $\tilde \varphi = \min\{1,\varphi_+\} \in C_c(U)\cap H^1_0(U)$ with $0\leq \tilde \varphi \leq 1$. We now take the heat semigroup regularization $\tilde \varphi_t$ of $\tilde \varphi$ defined in \eqref{def_h_t}. 
For $t>0$ sufficiently small, and $\delta>0$ to be chosen, we take $\psi_t = \frac{\tilde \varphi_t}{\sqrt{1-\delta}}$ and compute 
\begin{eqnarray*}
\text{\rm Cap}(K,U) \leq \int_U \vert \nabla \psi_t\vert^2 d\mu = \frac{1}{1-\delta}\int_U \vert \nabla \tilde \varphi_t\vert^2 d\mu \leq \frac{1}{1-\delta}\int_U \vert \nabla \tilde \varphi\vert^2 d\mu,
\end{eqnarray*}
where we used \eqref{ineq_nablau_t} and the fact that $\psi_t \geq 1$ in a neighborhood of $K$ is an admissible test function for the standard capacity. Noting that $\nabla \tilde \varphi = \nabla \varphi$ where $\tilde \varphi$ is not constant, this in turn implies that
\begin{eqnarray*}
\text{\rm Cap}(K,U) \leq -\frac{1}{1-\delta} \int_U  \tilde \varphi \Delta \varphi d\mu.
\end{eqnarray*}
Finally, since $\varphi \in C^\infty_c(U)$, we have that $\int_U \Delta \varphi d\mu = 0$ and 
$$\text{\rm Cap}(K,U) \leq -\frac{1}{1-\delta} \int_U  (\tilde \varphi - \frac{1}{2}) \Delta \varphi d\mu \leq \frac{1}{2(1-\delta)} \int_U \vert \Delta \varphi\vert  d\mu.$$
The required conclusion is obtained by first letting 
$\delta \to 0$ and then taking the infimum over all $\varphi\in C^\infty_c(U)$ such that $\varphi\geq 1$ on $K$. 
\end{proof}

\noindent \textbf{Acknowledgements.} The first author is funded by the Deutsche Forschungsgemeinschaft (DFG, German Research Foundation) - Project-ID 317210226 - SFB 1283. The second author is partially supported by the Alexander von Humboldt Foundation, by Coordena\c c\~ao de Aperfeiçoamento de Pessoal de N\'ivel Superior - Capes (Finance Code 001) and Conselho Nacional de Desenvolvimento Cient\'ifico e Tecnol\'ogico - CNPq, Grants 422900/2021-4, 306543/2022-2. The third author is member of the GNAMPA group “Equazioni differenziali e sistemi dinamici”. The second author is grateful to the Faculty of Mathematics at the Universit\"at Bielefeld for their warm hospitality.

\bibliographystyle{amsplain}

\end{document}